
\documentclass[12pt,twoside,a4paper]{article}

\usepackage[active]{srcltx} 
\usepackage{hyperref}
\usepackage[T1]{fontenc}
\usepackage{amsmath,amsfonts,amsthm,amssymb}
\DeclareFontEncoding{LS1}{}{}
\DeclareFontSubstitution{LS1}{stix}{m}{n}
\DeclareSymbolFont{symbols2}{LS1}{stixfrak} {m} {n}
\DeclareMathSymbol{\operp}{\mathbin}{symbols2}{"A8}

\title{Hodge Operators and Exceptional Isomorphisms\\
  between Unitary Groups}
\author{Linus Kramer, Markus J. Stroppel}%
\def\shortauthor{Linus Kramer, Markus J. Stroppel}%
\def\shorttitle{Hodge Operators and Exceptional Isomorphisms}                                     


\newcounter{rememberEnumi}

\usepackage[all]{xy}

\tolerance=300
\pretolerance=200
\hfuzz=1pt
\vfuzz=1pt
\parindent35pt
\mathsurround1pt
\parskip=1pt plus .25pt minus .25pt
\parsep0.0explus0.0exminus0.1ex
\topsep0.0explus0.0exminus0.1ex
\itemsep0explus0.2ex
\normallineskiplimit=1pt
\setlength{\topmargin}{-50pt}
\setlength{\evensidemargin}{22pt}
\setlength{\oddsidemargin}{22pt}
\addtolength{\textwidth}{14mm} 
\textheight = 46\baselineskip
\advance\textheight by \topskip

\makeatletter
%
%

\newcommand{\rightheadline}{\hfill\sc\shorttitle\hfill}
\newcommand{\leftheadline}{\hfill\sc\shortauthor\hfill}

\def\ps@headings{\let\@mkboth\@gobbletwo
\def\@oddhead{\rightheadline\rm\thepage}
\def\@oddfoot{}
\def\@evenhead{\rm\thepage\leftheadline}
\def\@evenfoot{}
}
\pagestyle{headings}

\renewcommand{\section}{%
	\@startsection{section}{1}{\z@}%
	   {-0.8truecm}%
	   {\medskipamount}%
	   {\boldmath\normalfont\bfseries \centering}}
\makeatother
         
\newtheoremstyle{mytheorem}
 {\baselineskip}
 {\baselineskip}
 {\itshape}
 {}
 {\bfseries}
 {.}
 {1.5em}
 {}

\newtheoremstyle{mydefinition}
 {\baselineskip}
 {\baselineskip}
 {}
 {}
 {\bfseries}
 {.}
 {1.5em}
 {}

\swapnumbers
\theoremstyle{mytheorem}
\newtheorem{theo}{Theorem}[section]
\newtheorem{lemm}[theo]{Lemma}

\newtheorem{prop}[theo]{Proposition}
\newtheorem{namet}[theo]{\myThmName}
\newenvironment{nthm}[1][\kern-.35em]{\edef\myThmName{#1}\begin{namet}}{\end{namet}}

\theoremstyle{mydefinition}
\newtheorem{defi}[theo]{Definition}

\newtheorem{exam}[theo]{Example}
\newtheorem{exas}[theo]{Examples}
\newtheorem{rema}[theo]{Remark}
\newtheorem{rems}[theo]{Remarks}
\newtheorem*{acks}{Acknowledgements}
\newtheorem{named}[theo]{\myThmName}
\newenvironment{ndef}[1][\kern-.35em]{\edef\myThmName{#1}\begin{named}}{\end{named}}
\makeatletter
\newcommand{\widebar}[1]{ {\mathchoice
    {\vbox
    {\m@th \ialign {##\crcr \noalign {\kern 1\p@ }\kern 1\p@ \hrulefill \crcr
        \noalign {\kern 1\p@ \nointerlineskip }%
        $\hfil \displaystyle {#1}\hfil $\crcr }}}
    {\vbox
    {\m@th \ialign {##\crcr \noalign {\kern 1\p@ }\kern 1\p@ \hrulefill \crcr
        \noalign {\kern 1\p@ \nointerlineskip }%
        $\hfil \textstyle {#1} $\crcr }}}
    {\vbox
    {\m@th \ialign {##\crcr \noalign {\kern 1\p@ }\kern 1\p@ \hrulefill \crcr
        \noalign {\kern 1\p@ \nointerlineskip }%
        $\hfil \scriptstyle {#1}\hfil $\crcr }}}
    {\vbox
    {\m@th \ialign {##\crcr \noalign {\kern 1\p@ }\kern 1\p@ \hrulefill \crcr
        \noalign {\kern 1\p@ \nointerlineskip }%
        $\hfil \scriptscriptstyle {#1}\hfil $\crcr }}}%
    }}
\makeatother
\let\gal\widebar
\newcommand{\bra}[1]{\langle#1\rangle}

\newcommand{\bj}{\boldsymbol{j}}

\newcommand{\Tns}[1][\,]{{\textstyle\bigotimes\nolimits^{#1}}}
\newcommand{\Ext}[1][\,]{{\textstyle\bigwedge\nolimits^{\!#1}}}
%
\newcommand{\coloneqq}{\mathrel{\mathop:}=}
\newcommand{\id}{\mathrm{id}}
\newcommand{\Char}{\operatorname{char}}
\let\phi\varphi

\newcommand{\disc}{\operatorname{disc}}
\newcommand{\transp}{{^\intercal}}

\makeatletter
\newcommand{\myBox}[1]{{\mathchoice
    {\vbox
    {\m@th \ialign {##\crcr \hline \crcr
        \noalign {\nointerlineskip }%
        \vrule
        \vphantom{x}\smash{\lower.21\p@\hbox{$\kern-1.7\p@ \displaystyle {#1}\kern-1.7\p@$}}\vrule\crcr
        \noalign {\nointerlineskip  }\crcr\hline%
      }}}
    {\vbox
    {\m@th \ialign {##\crcr \hline \crcr
        \noalign {\nointerlineskip }%
        \vrule
        \vphantom{x}\smash{\lower.25\p@\hbox{$\kern-1.7\p@ \textstyle {#1}\kern-1.7\p@$}}\vrule\crcr
        \noalign {\nointerlineskip  }\crcr\hline%
      }}}
    {\vbox
    {\m@th \ialign {##\crcr \hline \crcr
        \noalign {\nointerlineskip }%
        \vrule
        $\scriptstyle\vphantom{x}$\smash{\lower.25\p@\hbox{$\kern-1.7\p@ \scriptstyle {#1}\kern-1.6\p@$}}\vrule\crcr
        \noalign {\nointerlineskip  }\crcr\hline%
      }}}
    {\vbox
    {\m@th \ialign {##\crcr \hline \crcr
        \noalign {\nointerlineskip }%
        \vrule
        $\scriptscriptstyle\vphantom{x}$\smash{\lower.18\p@\hbox{$\kern-1.6\p@ \scriptscriptstyle {#1}\kern-1.5\p@$}}\vrule\crcr
        \noalign {\nointerlineskip  }\crcr\hline%
      }}}%
    }}
\makeatother
\newcommand{\myTimes}{{\vphantom{x}\smash{\times}}}
\newcommand{\sq}{\myBox{\phantom{\myTimes}}}
\newcommand{\sqt}{\myBox{\myTimes}}
\newcommand{\set}[2]{\left\{{#1}\left|\vphantom{#1#2\strut}\right.\, 
                    {#2}\right\}}
\newcommand{\smallset}[2]{\{{#1}\left|\vphantom{}\right.\, 
                    {#2}\}}
\newcommand{\Pu}[1]{\operatorname{Pu}(#1)}

\newcommand{\Aut}[1]{\mathrm{Aut}(#1)}

\newcommand{\Hom}[3][]{\mathrm{Hom}_{#1}(#2,#3)}
\newcommand{\End}[2][]{\mathrm{End}_{#1}(#2)}

\newcommand{\Type}[2]{\mathsf{#1}_{#2}}
\newcommand{\rType}[3]{\mathsf{#1}_{#2}^{#3}}
\newcommand{\CC}{\mathbb C}
\newcommand{\FF}{\mathbb F}
\newcommand{\HH}{\mathbb H}
\newcommand{\NN}{\mathbb N}
\newcommand{\OO}{\mathbb O}
\newcommand{\QQ}{\mathbb Q}
\newcommand{\RR}{\mathbb R}
\newcommand{\Ss}{\mathbb S}
\newcommand{\ZZ}{\mathbb Z}
\newcommand{\C}{{\boldsymbol{C}}}
\newcommand{\F}{{\boldsymbol{F}}}
\renewcommand{\H}{{\boldsymbol{H}}}
\newcommand{\K}{{\boldsymbol{K}}}
\renewcommand{\L}{{\boldsymbol{L}}}
\newcommand{\R}{{\boldsymbol{R}}}
\newcommand{\cP}{{\mathcal{P}}}
\newcommand{\cR}{{\mathcal{R}}}
\newcommand{\gL}[2][]{\Gamma\mathrm{L}_{#1}{(#2)}}
\newcommand{\GL}[2][]{\mathrm{GL}_{#1}{(#2)}}
\newcommand{\SL}[2][]{\mathrm{SL}_{#1}{(#2)}}

\newcommand{\PSL}[2][]{\mathrm{PSL}_{#1}{#2}}
\newcommand{\PGL}[2][]{\mathrm{PGL}_{#1}{#2}}

\newcommand{\Orth}[3][]{\mathrm{O\kern0pt}^{#1}(\ifx#2\empty\else#2,\fi#3)}
\newcommand{\Oprime}[3][]{\Omega^{#1}(\ifx#2\empty\else#2,\fi#3)}

\newcommand{\SO}[2]{\mathrm{SO}_{#1}{(#2)}}

\newcommand{\SOpV}[1]{\mathrm{SO}^{+}{(#1)}}
\newcommand{\OpmV}[1]{\Omega^{-}{(#1)}}
\newcommand{\OmV}[1]{\mathrm{O}^{-}{(#1)}}
\newcommand{\OV}[1]{\mathrm{O}{(#1)}}
\newcommand{\SOV}[1]{\mathrm{SO}{(#1)}}
\newcommand{\EOV}[1]{\mathrm{EO}{(#1)}}
\newcommand{\GOV}[1]{\mathrm{GO}{(#1)}}
\newcommand{\gOV}[1]{\mathrm{\Gamma O}{(#1)}}

\newcommand{\UV}[1]{\mathrm{U}{(#1)}}
\newcommand{\SUV}[1]{\mathrm{SU}{(#1)}}
\newcommand{\EUV}[1]{\mathrm{EU}{(#1)}}

\newcommand{\GUV}[1]{\mathrm{GU}{(#1)}}
\newcommand{\gUV}[1]{\Gamma\mathrm{U}{(#1)}}
\newcommand{\U}[3][\kappa]{\mathrm{U}^{#1}_{#2}{(#3)}}
\newcommand{\SU}[3][\kappa]{\mathrm{SU}^{#1}_{#2}{(#3)}}

\newcommand{\SaU}[2]{\mathrm{S\alpha U}_{#1}{(#2)}}

\newcommand{\SpV}[1]{\mathrm{Sp}(#1)}
\newcommand{\pf}{\mathrm{Pf}}
\newcommand{\pq}{\mathrm{Pq}}
\newcommand{\PG}[2][]{\mathrm{P}_{#1}(#2)}
\newcommand{\Gr}[3][]{\mathrm{Gr}_{#2,#1}(#3)}

\newcommand{\Abs}[1]{\mathrm{Abs}_{#1}}

\newcommand{\Cg}[1]{\operatorname{C}_{#1}}

\begin{document}
\maketitle

\begin{abstract}
  \noindent
  We give a generalization of the Hodge operator to spaces $(V,h)$
  endowed with a hermitian or symmetric bilinear form~$h$ over
  arbitrary fields, including the characteristic two case. Suitable
  exterior powers of~$V$ become free modules over an algebra~$K$
  defined using such an operator. This leads to several exceptional
  homomorphisms from unitary groups (with respect to~$h$) into groups
  of semi-similitudes with respect to a suitable form over some
  subfield of~$K$. The algebra $K$ depends on~$h$; it is a composition
  algebra unless $h$ is symmetric and the characteristic is two. %
\end{abstract}
Mathematics Subject Classification: 
{%
    \href{https://mathscinet.ams.org/mathscinet/search/mscbrowse.html?code=20G15}{20G15} 
    \href{https://mathscinet.ams.org/mathscinet/search/mscbrowse.html?code=20E32}{20E32} 
    \href{https://mathscinet.ams.org/mathscinet/search/mscbrowse.html?code=20G20}{20G20} 
    \href{https://mathscinet.ams.org/mathscinet/search/mscbrowse.html?code=20G40}{20G40} 
    \href{https://mathscinet.ams.org/mathscinet/search/mscbrowse.html?code=22C05}{22C05} 
    \href{https://mathscinet.ams.org/mathscinet/search/mscbrowse.html?code=11E39}{11E39} 
    \href{https://mathscinet.ams.org/mathscinet/search/mscbrowse.html?code=11E57}{11E57}
  }    
\\
Keywords:
{hermitian form, symmetric bilinear form, exterior product,
    Pfaffian form, Hodge operator, exceptional isomorphism,
    composition algebra, quaternion algebra}         

\section*{Introduction}

In many branches of mathematics, attempts at a systematic study (or
even a classification) of certain objects lead to the consideration of
families of examples constructed from given data (such as a
ground field, dimension, an invariant form, \dots).
With suitably defined categories, these constructions may even be
considered as functors. %
The reader may think of classical groups (data comprise a field, a
dimension, and an invariant form), or Dynkin diagrams (data comprise a
letter indicating a construction of a graph, and a
natural number giving the number of vertices in that graph),
or split forms of simple Lie algebras (data comprise a field, and a
Dynkin diagram),
or finite simple groups of Lie type (data again comprise a field, and a
Dynkin diagram),
or classical polar spaces (data comprise a field, a dimension, and a
polarity of the projective space of that dimension over that field).

While these functors usually assign non-isomorphic structures to
different data sets, it often happens that unexpected isomorphisms
occur for ``small'' data sets.
If such an exceptional isomorphism occurs, it usually induces an
exceptional situation in a related field, or can be explained and
understood by such a situation.

In the present paper, we are interested in exceptional isomorphisms
between certain classical groups. In this context, the simple groups
in question are usually obtained by forming first the commutator group
and then its quotient by the center, while the natural habitats of
classical groups are inside linear groups. For that reason, it is
sometimes better to study not only isomorphisms but also
homomorphisms.

We show that certain homomorphisms between classical groups can be
explained by variants of the Hodge operator.  Some of the isomorphisms
discussed here are not easily accessible by other methods, so we think
our considerations are worth the effort. %
We have tried to keep the presentation reasonably self-contained, in
particular, we avoid the use of elaborate results from Lie theory or
the theory of algebraic groups. Applications to real Lie groups are
made explicit in several examples (see~\ref{rem:SaU}, \ref{RR4},
\ref{ex:O2RR}, \ref{ex:U2CC}, \ref{SU4ContoSO6R},
\ref{SU4C2ontoSO6R2}, and~\ref{exam:Lorentz}); %
applications to finite groups are explicit
in~\ref{SU4finiteontoOminus6}, and \ref{exam:finiteOrth}.

\section{The Hodge operator}

\begin{ndef}[Notation]
  We consider a field $\F$ of arbitrary characteristic and a field
  automorphism $\sigma\colon x\mapsto\gal{x}$ whose square is the
  identity.  Let $\R$ denote the fixed field of~$\sigma$. So either
  $\F=\R$ or $\F|\R$ is a Galois extension of degree~$2$ (by Artin's
  Theorem, see~\cite[VI, Th.\,1.8]{MR1878556}).  %
  In any case we call $N_{\F|\R}\colon\F\to\R\colon x\mapsto \gal{x}x$
  the \emph{norm form} of $\F|\R$. Note that $N_{\F|\R}(\F)$ is just
  the set $\F^\sq \coloneqq \set{x^2}{x\in\F}$ of squares if
  $\sigma=\id$.

  A map $\lambda\colon V\to W$ between vector space over~$\F$ is
  called \emph{$\F$-semilinear} if~$\lambda$ is additive and there
  exists an automorphism~$\varphi$ of~$\F$ (called the companion
  of~$\lambda$) such that $\lambda(sv) = \varphi(s)\lambda(v)$ holds
  for each $s\in\F$ and each $v\in V$. If we want to state explicitly
  that the companion is~$\varphi$, we speak of an
  $\F$-$\varphi$-semilinear map. %
  (In the context of real and complex analysis, one encounters
  $\CC$-$\gal{\phantom{x}}$-semilinear maps under the name
  $\CC$-antilinear maps.)

  Let $(V,h)$ be an $n$-dimensional non-degenerate $\sigma$-hermitian
  space over~$\F$; i.e., a vector space~$V$ over~$F$ with a
  bi-additive map $h\colon V\times V\to F$ such that
  $h(v,ws) = h(v,w)s$ and $h(w,v)=\gal{h(v,w)}$ hold for all
  $v,w\in V$ and each $s\in F$. %
  (In particular, the map~$h$ is $\F$-linear in the right variable,
  and $\F$-$\sigma$-semilinear in the left variable.)

  If $\sigma=\id$ then the form~$h$ is a symmetric bilinear form. Such
  forms occur as polarizations of quadratic forms; i.e., as
  $f(x,y) \coloneqq q(x+y)-q(x)-q(y)$ for a quadratic form~$q$.  (Some
  authors refer to~$f$ as the polar form associated with~$q$.) %
  For any quadratic form in characteristic two, the polarization is an
  alternating form.

  If $\sigma=\id$ and $\Char\F=2$ we also assume that~$h$ is
  diagonalizable, i.e. that there exists an orthogonal basis. %
  We note that such a form will not occur as polarization of a
  quadratic form (unless it is zero). %
  In particular, the orthogonal group with respect to a
  \emph{bilinear} form on $\FF_{2^e}^n$ differs from the group usually
  denoted by $\Orth[\varepsilon]n{2^e}$. %
  (If $\Char\F\ne2$ or $\sigma\ne\id$ then diagonalizability is no
  extra condition, cf.~\cite[\S8,\,p.\,15]{MR0072144}
  or~\cite[I.3.4]{MR0506372}.)  %
  A vector $v\in V\smallsetminus\{0\}$ is called \emph{isotropic}
  (with respect to~$h$) if $h(v,v)=0$; the form~$h$ is called
  isotropic if there exists an isotropic vector.

  As we are going to extend the field~$\F$ to a non-commutative
  composition algebra (containing~$\F$ as a non-central subalgebra)
  over~$\R$, we have to be precise: elements of~$\F^n$ will be
  considered as columns, with scalars acting from the right, and
  matrices acting from the left. However, elements of~$\R$ (in
  particular, signs such as $(-1)^m$) will also occur on the left.

  As $\dim{V}$ is assumed to be finite, our assumption that~$h$ be not
  degenerate is equivalent to the fact that we obtain an
  $\F$-$\sigma$-semilinear isomorphism onto the dual space~$V^\vee$,
  namely
  \[
    h^\vee \colon V\to V^\vee\colon v\mapsto h(v,-) \,
  \]
  see e.g.~\cite[Ch.\,I, \S2]{MR1096299}.  Consider now the exterior
  algebra $\Ext{V}$, cf.~\cite[VI\,9]{MR2257570}. We note
  that~$\Ext{}$ is a functor on vector spaces and semilinear maps,
  cf.~\ref{functoriality} below.  Moreover, there is a natural
  isomorphism \(\textstyle%
  (\Ext{V})^\vee\cong\Ext(V^\vee) \,, \) %
  so we may write unambiguously~$\Ext{V}^\vee$. Explicitly, %
  we have \( %
  \bra{f_1\wedge\cdots\wedge f_\ell ,w_1\wedge\cdots\wedge
    w_\ell}=\det(\bra{f_i,w_j}) %
  \) %
  for $f_i\in V^\vee$ and $w_j\in V$, see~\cite[I.5.6]{MR0506372}
  or\footnote{ \ %
    The treatment in~\cite{MR0026989} is quite different from that in
    later editions~\cite{MR0274237}, \cite{MR979982}.} %
  \cite[\S\,8, Thme.\,1, p.\,102]{MR0026989}; %
  here $\bra{\cdot,\cdot}$ denotes the natural pairing between a
  vector space and its dual. %
  Applying the functor~$\Ext$ to $ h^\vee \colon V\to V^\vee$, we
  obtain $\Ext{h^\vee} \colon\Ext{V}\to\Ext{V^\vee}$; we interpret
  this as a hermitian form $\Ext{h}$ on the exterior
  algebra~$\Ext{V}$.  %
  Using the explicit formula above, we find
  \[\textstyle
    \Ext{h}(v_1\wedge\cdots\wedge v_\ell ,w_1\wedge\cdots\wedge w_\ell )
    =
    \Ext[\ell]{h}(v_1\wedge\cdots\wedge v_\ell ,w_1\wedge\cdots\wedge w_\ell )
    = \det(h(v_i,w_j)).
  \]
\end{ndef}

\begin{ndef}[The Pfaffian form]\label{def:Pfaffian}%
  The exterior algebra comes with a natural $\ZZ$-grading and
  $\Ext[n]V$ is $1$-dimensional. We fix an $\F$-linear isomorphism
  $b\colon \Ext[n]V\to\F$. For each positive integer~$\ell \le n$, the
  map~$b$ then induces an $\F$-linear isomorphism
  $\pf\colon \Ext[n-\ell]V\to\Ext[\ell]V^\vee$ given by
  \[
    \textstyle %
    \pf(v_1\wedge\cdots\wedge v_{n-\ell})(w_1\wedge\cdots\wedge w_\ell) %
    = b(v_1\wedge\cdots\wedge v_{n-\ell}\wedge w_1\wedge\cdots\wedge w_\ell) \,.
  \]
  This is the \emph{Pfaffian form}, see~\cite[VI\,10 Problems\,23--28,
  VIII\,12 Problem\,42]{MR2257570}.  The resulting bilinear map $\pf$
  on $\Ext{V}$ is ``graded symmetric'',
  \[
    \textstyle%
    \pf(v_1\wedge\cdots\wedge v_{n-\ell},w_1\wedge\cdots\wedge w_\ell ) %
    = (-1)^{(n-\ell )\ell} \, \pf(w_1\wedge\cdots\wedge
    w_\ell,v_1\wedge\cdots\wedge v_{n-\ell}) \,.
  \]
  If $n=2\ell$ then the bilinear form $\pf$ is isotropic on
  $\Ext[\ell]V$; in fact, we then have
  $\pf(v_1\wedge\cdots\wedge v_{\ell},v_1\wedge\cdots\wedge v_\ell ) =
  0 $ %
  for each basic $\ell$-vector $v_1\wedge\cdots\wedge v_\ell$. %
\end{ndef}

\begin{rema}\label{KleinQuadric}
  For $n=4$ and $\ell=2$ we are dealing with the space $\Ext[2]{\F^4}$
  that carries the Klein quadric.  The \emph{quadratic form}~$\pq$
  defining the Klein quadric is also referred to as a Pfaffian form
  (cf.~\cite{MR2926161} and~\cite{MR2431124} where this form is
  denoted by~$q$), and $\pf$ is the symmetric bilinear form
  obtained as polarization of that quadratic form~$\pq$.  This is no
  source of confusion as long as $\Char\F\ne2$.  However, the polar
  form~$\pf$ carries less information than the quadratic form~$\pq$ if
  $\Char\F=2$.

  If one interprets the elements of $\Ext[2]{\F^4}$ as alternating
  matrices then there exists a scalar $s\in\F^\myTimes$ such that
  $\pq(X)^2=s\,\det{X}$ holds for each $X\in\Ext[2]{\F^4}$,
  cf.~\cite[\S\,5 no.\,2, Prop.\,2, p.\,84]{MR0107661}; the scalar~$s$
  reflects the choice of basis underlying that interpretation. 
  See~\cite[12.14]{MR1189139} for an interpretation of $\pq$ in
  terms of the exterior algebra. 
\end{rema}

\begin{ndef}[The Hodge operator]
We now consider the composite
\[
  J \coloneqq \pf^{-1}\circ\Ext{h} \colon
\xymatrix@C3em
{%
  \Ext[\ell]V
  \ar[r]^{\Ext{h}}_{\cong}
  & \Ext[\ell]V^\vee
  \ar[r]_{\cong}^{\pf^{-1}}
  & \Ext[n-\ell]V \,.
}
\]
This $\sigma$-linear isomorphism is the \emph{Hodge operator}. It
depends, of course, on~$h$ and on~$b$ but not on the choice of basis.
See, e.g., \cite[pp.\,21--31]{MR0401796} for a discussion of the Hodge
operator for euclidean spaces over~$\RR$.
\end{ndef}

\begin{ndef}[Explicit computation]\label{computeHodge}  
  Suppose that $v_1,\ldots,v_n$ is an orthogonal\footnote{ \ %
    If $\Char\F=2$ and $\sigma=\id$ then an orthogonal basis exists by
    our diagonalizability
    assumption.} %
  basis of~$V$. %
  For $\Ext[\ell]{V}$ we use the basis vectors
  $v_{i_1}\wedge\dots\wedge v_{i_\ell}$ with ascending
  $i_1<\dots< i_\ell\le n$. %
  Then %
  ${\Ext[\ell]{h}(v_1\wedge\cdots\wedge v_\ell,-)}$ %
  is a linear form on $\Ext[\ell]{V}$ which annihilates each one of
  those basis vectors, %
  except for $v_1\wedge\cdots\wedge v_\ell$; in fact
  \[
    \textstyle%
    \Ext[\ell]{h}(v_1\wedge\cdots\wedge v_\ell ,v_1\wedge\cdots\wedge v_\ell ) %
    = h(v_1,v_1)\cdots h(v_\ell ,v_\ell ) \,.
  \]
  In other words: $\Ext[\ell]{h}$ is again diagonalizable, and so
  is~$\Ext{h}$. %
  It then also follows that both
  $\Ext[\ell]h \colon\Ext[\ell]{V}\times\Ext[\ell]{V}\to \F$ and
  $\Ext{h} \colon\Ext{V}\times\Ext{V}\to \F$ are not degenerate. %
  The linear form $\pf(v_{\ell +1}\wedge\cdots\wedge v_n)$ annihilates
  the same collection of basis $\ell $-vectors, and
  \[
    \pf(v_{\ell +1}\wedge\cdots\wedge v_n,v_1\wedge\cdots\wedge v_\ell ) %
    = (-1)^{(n-\ell )\ell} \, b(v_1\wedge\cdots\wedge v_n) \,.
  \]
  Therefore
\begin{eqnarray*}
J(v_1\wedge\cdots\wedge v_\ell ) &=&
v_{\ell +1}\wedge\cdots\wedge v_n \,
\frac{h(v_1,v_1)\cdots h(v_\ell ,v_\ell )}%
{b(v_1\wedge\cdots\wedge v_n)}\, %
(-1)^{(n-\ell )\ell} \,. %
\\
\noalign{\noindent Similarly, we compute}
  J (v_{\ell +1}\wedge\cdots\wedge v_n) &=&
  v_1\wedge\cdots\wedge v_\ell \,
    \frac{h(v_{\ell+1},v_{\ell+1})\cdots h(v_n ,v_n )}%
  {b(v_{\ell +1}\wedge\cdots\wedge v_n\wedge v_1\wedge\cdots\wedge
    v_\ell )} \, %
   (-1)^{(n-\ell )\ell} 
  \\
  &=& 
  v_1\wedge\cdots\wedge v_\ell \,
  \frac{h(v_{\ell+1},v_{\ell+1})\cdots h(v_n ,v_n )}
  {b(v_1\wedge\cdots\wedge v_n)} \,.
\end{eqnarray*}
Note that these formulae are correct only if $v_1,\dots,v_n$ is an
orthogonal basis, and cannot be used if $v_1\wedge\dots\wedge v_\ell$
corresponds to a subspace $U$ of $V$ such that $h|_{U\times U}$ is
degenerate.
\end{ndef}

\begin{ndef}[Functoriality]\label{functoriality}
  It is well known that $\bigotimes^k\colon V\mapsto \bigotimes^kV$
  and $\Ext[k]\colon V\mapsto\Ext[k]{V}$ are functors in the category
  of vector spaces over $\F$ and $\F$-linear maps. We need the fact
  that these are indeed functors in the category of vector spaces over
  $\F$ and $\F$-\emph{semi}linear maps. %
  Since the pertinent facts seem less well known, we take the liberty
  to expand some of the details, in the following.

  Let $V,W$ be vector spaces over~$\F$. An explicit construction of
  the tensor product ${V\otimes W}$ is obtained by factoring the free
  vector space $\F^{(V\times W)}$ with basis ${V\times W}$ over~$\F$
  modulo the subspace $R_{V,W}$ generated by all expressions of the
  form $(v,w)+(v,y)-(v,w+y)$, $(v,w)+(x,w)-(v+x,w)$, $(vc,w)-(v,w)c$
  and $(v,wc)-(v,w)c$ where $v,x\in V$, $w,y\in W$ and $c\in\F$,
  see~\cite[p.\,262]{MR2257570}. %
  As usual, we write $v\otimes w \coloneqq (v,w)+R_{V,W}$.  The tensor
  power $\Tns[k]{V}$ can now be constructed inductively from
  $\Tns[0]{V} \coloneqq \F$ and $\Tns[k+1]{V} \coloneqq
  V\otimes\Tns[k]{V}$.

  In order to extend the morphism part of $\bigotimes^k$ and $\Ext[k]$
  to the category of vector spaces over~$\F$ and
  $\F$-\emph{semi}linear maps we first consider semilinear maps
  $\lambda\colon V\to W$ and $\mu\colon {X\to Y}$ with the same
  companion field homomorphism ${\varphi\colon F\to F}$. %
  Mapping $(v,x)\in V\times X$ to $(\lambda(v),\mu(x))$ extends to an
  $\F$-$\varphi$-semilinear map from~$\F^{(V\times X)}$
  to~$\F^{(W\times Y)}$ which maps $R_{V,X}$ into $R_{W,Y}$. Thus
  $(\lambda\otimes\mu)(v\otimes x) \coloneqq \lambda(v)\otimes\mu(x)$
  defines an $\F$-$\varphi$-semilinear map
  ${\lambda\otimes\mu}\colon {V\otimes X}\to {W\otimes Y}$.

  Proceeding inductively with
  $\mu=\Tns[k-1]{\lambda}\colon\Tns[k-1]{V}\to\Tns[k-1]{W}$, we obtain
  $\F$-$\varphi$-semilinear maps
  \[
  \Tns[k]{\lambda}\colon\Tns[k]{V}\to\Tns[k]{W}\colon
  v_1\otimes\dots\otimes v_k \mapsto
  \lambda(v_1)\otimes\dots\otimes\lambda(v_k)
  \]
  for each~$k\in\NN$.

  Now consider $\F$-semilinear maps $\lambda\colon V\to W$ and
  $\mu\colon W\to Y$ with (possibly different) companion field
  homomorphisms~$\varphi$ and~$\psi$, respectively.  Then
  \[
  \Tns[k]{\mu}\left(
    \Tns[k]{\lambda}(v_1\otimes\dots\otimes v_k)\right)
  = \mu(\lambda(v_1))\otimes\dots\otimes\mu(\lambda(v_k)) =
  \Tns[k]{(\mu\circ\lambda)}(v_1\otimes\dots\otimes v_k)
  \]
  shows $\Tns[k]{\mu}\circ\Tns[k]{\lambda} =
  \Tns[k]{(\mu\circ\lambda)}$, and functoriality of $\Tns[k]$ is
  established.  Combination of these functors yields a functor $\Tns$
  with $\Tns{V} \coloneqq \bigoplus_{k\in\NN}\Tns[k]{V}$ and
  $(\Tns{\lambda})(\sum_{k\in\NN}t_k)  \coloneqq 
  \sum_{k\in\NN}\Tns[k]{\lambda}(t_k)$.

  Finally we consider the exterior algebra
  $\Ext{V}=\bigoplus_{k\in\NN}\Ext[k]{V}$ which is obtained as the
  quotient of $\Tns{V}$ modulo the two-sided ideal $S_V$ generated by
  $\set{v\otimes v}{v\in V}$, cf.~\cite[p.\,288]{MR2257570}. We write
  $v_1\wedge\dots\wedge v_k \coloneqq %
  (v_1\otimes\dots\otimes v_k)+S_V$ as usual. For each $\F$-semilinear
  map $\lambda\colon V\to W$ we note that $\Tns{\lambda}(S_V)$ is
  contained in~$S_W$. Thus
  $\Ext[k]{\lambda}(v_1\wedge\dots\wedge v_k) \coloneqq
  \lambda(v_1)\wedge\dots\wedge\lambda(v_k)$ is well defined.  Note
  that we use the multiplication in $\Tns{V}$ only for the definition
  of~$S_V$.
\end{ndef}

\begin{exam}\label{muDet}
  Assume $\dim{V}=n$ and consider an $\F$-semilinear endomorphism
  ${\lambda\colon V\to V}$ with companion field
  endomorphism~$\varphi$.  Choose a basis $v_1,\dots,v_n$ for~$V$;
  then $\lambda$ is the composite $\lambda=\lambda'\circ\hat\varphi$
  of some linear map $\lambda'\colon V\to V$ with the map
  $\hat\varphi\colon\sum_{k=1}^n v_k x_k \mapsto \sum_{k=1}^n v_k
  \varphi(x_k)$. It is well known (cf.~\cite[Thm.\,4.11]{MR1878556}
  or~\cite[III\,\S\,8]{MR979982}) %
  that $\Ext[n]\lambda'$ is just multiplication by
  $\det{\lambda'}$. For each $s\in\F$ we now have
  $\left(\Ext[n]{\hat\varphi}\right)(v_1s\wedge\dots\wedge v_n) =
  {(v_1\wedge\dots\wedge v_n)\varphi(s)}$ and thus
  \[
    \begin{array}{rcl}
      \left(\Ext[n]{\lambda}\right)\left((v_1\wedge\dots\wedge v_n)s\right) &=&
      {(\Ext[n]{\lambda'}\circ
        \Ext[n]{\hat\varphi})\left(v_1s\wedge\dots\wedge v_n\right)}
      \\
   &=& \left(v_1\wedge\dots\wedge v_n\right) \,\varphi(s)\, (\det\lambda')\,.
    \end{array}
  \]
\end{exam}

\begin{nthm}[The unitary group]\label{semiSimilitudesAct}
  The special unitary group $\SUV{V,h}$ acts in a natural way both
  on~$\Ext{V}$
  and on~$\Ext{V^\vee}$
  and commutes with $\Ext{h}\colon \Ext{V}\to\Ext{V}^\vee$
  because~$\Ext$ is a functor. %
  The group $\SUV{V,h}$ also commutes with
  $\pf\colon \Ext[n-\ell]V\to\Ext[\ell]V^\vee$ because its elements
  have determinant~$1$. Therefore it centralizes the Hodge
  operator~$J$.  %
  More generally, the groups
  \[
  \begin{array}{rcl}
    \GUV{V,h} &\coloneqq& \set{\lambda\in\GL{V}}{\exists\,
      r_\lambda\in\F\,\forall\, v,w\in V\colon
      h\left(\lambda(v),\lambda(w)\right) = r_\lambda \, h(v,w) } \text{ and}\\[1ex]
    \gUV{V,h} &\coloneqq& \set{\lambda\in\gL{V}}{\exists\,
      r_\lambda\in\F\,\forall\, v,w\in V\colon
      h\left(\lambda(v),\lambda(w)\right) = r_\lambda \,\varphi_\lambda(h(v,w)) }
  \end{array}
  \]
  of similitudes and semi-similitudes, respectively, also act
  naturally on $\Ext{V}$; here $\varphi_\lambda$ denotes the companion
  field automorphism of $\lambda\in\gL{V}$. %
  A general element~$\lambda$ of $\gUV{V,h}$ will not centralize the
  Hodge operator but it will normalize the set~$\F\id\circ J$; %
  in fact, the conjugate
  $\Ext[n-\ell]\lambda\circ J\circ(\Ext[\ell]\lambda)^{-1} = t_\lambda\id\circ
  J$, with
  \[
    t_\lambda =
    \frac
    {\varphi_\lambda(\det\lambda')}
    {r_\lambda^{\ell}}
    \, %
    \frac{b({v_1\wedge\dots\wedge v_n})}%
    {\varphi_\lambda(b({v_1\wedge\dots\wedge v_n}))} \,  %
  \]
  where $v_1,\dots,v_n$ is an orthonormal basis used to obtain a
  decomposition $\lambda = \lambda'\circ\hat\varphi$ as
  in~\ref{muDet}.
  (This simplifies to $t_\lambda = r_\lambda^{-\ell}\,\det\lambda$
  if $\lambda$ is $\F$-linear.)
\end{nthm}
\begin{proof}
  Only the assertion about
  $\Ext[n-\ell]\lambda\circ J\circ(\Ext[\ell]\lambda)^{-1}$ remains to
  be verified. %
  Let $v_1,\dots,v_n$ be the orthogonal basis that was used for the
  decomposition $\lambda = \lambda'\circ\hat\varphi$ as
  in~\ref{muDet}, and put $w_i \coloneqq \lambda(v_i)$. Then
  $w_1,\dots,w_n$ is an orthogonal basis, and
  $(\Ext[\ell]\lambda)^{-1}(w_1\wedge\dots\wedge w_\ell) =
  v_1\wedge\dots\wedge v_\ell$. %
  Using~\ref{computeHodge}, we obtain that %
  $\Ext[n-\ell]\lambda\circ J\circ(\Ext[\ell]\lambda)^{-1}$ maps %
  ${w_1\wedge\dots\wedge w_\ell}$ to %
  \[
    \begin{array}{rcr}
      \left(\Ext[n-\ell]\lambda\circ J\right)
      ({v_1\wedge\dots\wedge v_\ell}) %
      &=&
          \left(\Ext[n-\ell]\lambda\right) %
          \left( %
          v_{\ell+1}\wedge\dots\wedge v_n \, %
          \frac{h(v_1,v_1)\cdots h(v_\ell ,v_\ell )}%
          {b(v_1\wedge\cdots\wedge v_n)}\, %
          (-1)^{(n-\ell )\ell} %
          \right)
      \\
      &=&
          (w_{\ell+1}\wedge\dots\wedge w_n) %
          \quad\varphi_\lambda\left(\frac{ h(v_1,v_1)\cdots h(v_\ell ,v_\ell )}%
          {b(v_1\wedge\cdots\wedge v_n)}\right)\, %
          (-1)^{(n-\ell )\ell} \,.            
    \end{array}
  \]
  Now %
  \( %
  h(w_i,w_i) = h(\lambda(v_i),\lambda(v_i)) =
  r_\lambda\varphi_\lambda(h(v_i,v_i)) %
  \) %
  yields \( %
  \varphi_\lambda(h(v_i,v_i)) = r_\lambda^{-1}h(w_i,w_i) %
  \). %
  From~\ref{muDet} we know
  ${w_1\wedge\dots\wedge w_n} = (\Ext[n]{\lambda})
  ({v_1\wedge\cdots\wedge v_n}) = {v_1\wedge\dots\wedge
    v_n}\det{\lambda'}$.  So
  $\varphi_\lambda\bigl(b({v_1\wedge\cdots\wedge v_n})\bigr) =
  \varphi_\lambda\bigl(b({w_1\wedge\cdots\wedge
    w_n})(\det\lambda')^{-1}\bigr)$, and the assertion follows.
\end{proof}

\begin{ndef}[Application in 3D]
  Let $h$ be the standard bilinear form on~$\RR^3$; %
  i.e., a form with an ortho\emph{normal} basis
  $v_1,v_2,v_3$. Moreover, let %
  $b(v_1\wedge v_2\wedge v_3)=-1$. Then $J(v_1)=-v_2\wedge v_3$,
  $J(v_2)=v_1\wedge v_3$, $J(v_3)=-v_1\wedge v_2$,
  see~\ref{computeHodge}. In other words:
  \[
  J\left(
    \begin{matrix}
      x\\
      y\\
      z
    \end{matrix}\right)
  =
  \left(
    \begin{matrix}
      0 & -z & y \\
      z & 0 & -x \\
      -y & x & 0
    \end{matrix}\right)
  \]
  and this map is $\SO{3}{\RR}$-equivariant, while
  $\lambda\circ J\circ\lambda^{-1} = -J$ holds for each
  $\lambda\in\Orth{3}{\RR}\smallsetminus\SO{3}{\RR}$.

  Let
  $\gamma \colon\Omega\to\RR^3\colon (x,y,z)^\transp \mapsto (\gamma
  _1(x,y,z),\gamma _2(x,y,z),\gamma _3(x,y,z))^\transp$ be a vector
  field defined on some domain $\Omega\subseteq\RR^3$. %
  Then the Hodge operator maps $\operatorname{curl}\gamma = ({ %
    \partial_y\gamma _3 - \partial_z\gamma _2 , %
    \partial_z\gamma _1 - \partial_x\gamma _3 , %
    \partial_x\gamma _2 - \partial_y\gamma _1 %
  })^\transp $ %
  to the anti-symmetrization of the Jacobian
  $\operatorname{Jac}\gamma$; indeed
  \[
  \operatorname{Jac}\gamma -(\operatorname{Jac}\gamma )^\transp =
  \left(
    \begin{matrix}
      0 & \partial_y\gamma _1-\partial_x\gamma _2 & \partial_z\gamma _1-\partial_x\gamma _3 \\
      \partial_x\gamma _2-\partial_y\gamma _1 & 0 & \partial_z\gamma _2-\partial_y\gamma _3 \\
      \partial_x\gamma _3-\partial_z\gamma _1 & \partial_y\gamma _3-\partial_z\gamma _2 & 0
    \end{matrix}
  \right) = J\left(
    \begin{matrix}
      \partial_y\gamma _3 - \partial_z\gamma _2 \\
      \partial_z\gamma _1 - \partial_x\gamma _3 \\
      \partial_x\gamma _2 - \partial_y\gamma _1 
    \end{matrix}
  \right) \,.
  \]
\end{ndef}

\begin{nthm}[The square of the Hodge operator]\label{squareHodge}
  Let $H$ be the Gram matrix of~$h$ with respect to the basis
  $v_1,\dots,v_n$. The square of~$J$ is a linear automorphism
  of~$\Ext[\ell]V$, and we find that
  \[\textstyle
  J^2 = \delta_\ell\, \id \quad\text{ where }\quad
  \delta_\ell \coloneqq (-1)^{(n-\ell )\ell}
  \frac{\det(H)}{N_{\F|\R}(b(v_1\wedge\cdots\wedge v_n))} \,.
  \]
\end{nthm}
Recall that $\det(H)$ depends on the choice of basis; the invariant
would be $\disc(h)\in\F^\myTimes/N_{\F|\R}(\F^\myTimes)$, the norm
class of $\det(h(v_i,v_j))$. %
However, the whole expression depends only on $h$ and~$b$. Replacing
the isomorphism $b\colon \Ext[n]V\to\F$ changes~$J$ by a factor
and~$J^2$ by the norm of that factor. In particular, the isomorphism
type of the algebra $\K_\ell$ introduced in~\ref{def:algebraK} below
does not depend on the choice of~$b$.  If $b$ is chosen in such a way
that $b(v_1\wedge\cdots\wedge v_n)=1$, then we arrive at the textbook
formula $ J^2=(-1)^{(n-\ell )\ell}\det(H)\,\id$, cp.
\cite[6.1]{MR722297}.

\goodbreak%
\begin{lemm}\label{HodgeSemiSimilitude}
  For all $x,y\in\Ext[\ell]V$ we have 
    \begin{enumerate}
    \item\label{firstAss}%
      $\pf(J(x),y) = \Ext[\ell]{h}(x,y)$, %
    \item $\pf(x,J(y)) =
      (-1)^{(n-\ell)\ell}\,\gal{\Ext[\ell]{h}(x,y)}$, %
    \item $\pf(J(x),J(y)) = \delta_\ell\,\gal{\pf(y,x)}$, %
    \item\label{secondAss}%
      $\Ext[\ell]{h}(J(x),y) = \delta_\ell\,{\pf(x,y)}$, %
    \item $\Ext[\ell]{h}(x,J(y)) = \delta_\ell\,\gal{\pf(y,x)}$, %
    \item\label{skewH}%
      $\Ext[\ell]{h}(J(x),J(y)) = %
      (-1)^{(n-\ell)\ell}
      \,\delta_\ell\,\gal{\Ext[\ell]{h}(x,y)}$. %
    \end{enumerate}
\end{lemm}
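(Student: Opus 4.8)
The lemma is stated in the situation $n=2\ell$, so that $J$ carries $\Ext[\ell]V$ into itself and every expression in (a)--(f) is meaningful; I will use this throughout. All six identities will be extracted from the single defining property of~$J$, namely $\pf\circ J=\Ext{h}$, together with the graded symmetry of $\pf$ from~\ref{def:Pfaffian}, the hermitian symmetry $\Ext[\ell]{h}(y,x)=\gal{\Ext[\ell]{h}(x,y)}$, and the value $J^2=\delta_\ell\,\id$ computed in~\ref{squareHodge}. One preliminary remark does the real work of the proof: since $H$ is the Gram matrix of the hermitian form~$h$ we have $\gal{H}\transp=H$, hence $\det(H)=\gal{\det(H)}\in\R$; as $N_{\F|\R}(b(v_1\wedge\cdots\wedge v_n))\in\R$ by definition and $(-1)^{(n-\ell)\ell}\in\R$, the scalar $\delta_\ell$ lies in~$\R$ (and is nonzero, $J$ being an isomorphism). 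Consequently $\delta_\ell$ may be pulled out of the $\F$-bilinear form~$\pf$ and commuted past the bar without change.

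First I would dispose of (a) and (b). Reading $\pf\circ J=\Ext{h}$ on $\Ext[\ell]V$ and pairing with $y\in\Ext[\ell]V$ gives at once $\pf(J(x),y)=\Ext[\ell]{h}(x,y)$, which is (a). For (b), graded symmetry gives $\pf(x,J(y))=(-1)^{(n-\ell)\ell}\pf(J(y),x)$; now (a) applied to the pair $(y,x)$ yields $\pf(J(y),x)=\Ext[\ell]{h}(y,x)$, and hermitian symmetry rewrites this as $\gal{\Ext[\ell]{h}(x,y)}$, giving (b).

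Next I would treat (d) and (e). Substituting $J(x)$ for the first argument in (a) and invoking $J^2=\delta_\ell\,\id$ gives $\pf(\delta_\ell\,x,y)=\Ext[\ell]{h}(J(x),y)$; since $\delta_\ell\in\R\subseteq\F$ and $\pf$ is $\F$-bilinear, the left-hand side is $\delta_\ell\,\pf(x,y)$, which is (d). For (e), hermitian symmetry gives $\Ext[\ell]{h}(x,J(y))=\gal{\Ext[\ell]{h}(J(y),x)}$, and (d) together with $\delta_\ell=\gal{\delta_\ell}$ turns the right-hand side into $\delta_\ell\,\gal{\pf(y,x)}$, which is (e).

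Finally, (c) and (f) each cost one more substitution. Applying (a) with $J(y)$ in place of $y$ gives $\pf(J(x),J(y))=\Ext[\ell]{h}(x,J(y))$, and (e) finishes (c); applying (d) with $J(y)$ in place of $y$ gives $\Ext[\ell]{h}(J(x),J(y))=\delta_\ell\,\pf(x,J(y))$, and (b) finishes (f). I do not expect a genuine obstacle here: the whole argument is a formal chain that never uses an orthogonal basis, only the definition of~$J$ and the symmetry properties already listed. The two points that deserve a moment's care are, first, verifying at the outset that $\delta_\ell$ is $\sigma$-invariant — this is exactly what allows the scalar to migrate through the bilinear forms and through the conjugation — and, second, checking that each substitution $x\mapsto J(x)$ or $y\mapsto J(y)$ into an identity stated for elements of $\Ext[\ell]V$ is legitimate, which it is precisely because $n=2\ell$ forces $J(\Ext[\ell]V)=\Ext[\ell]V$. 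As a sanity check one may confirm that (c) is compatible with the graded symmetry of $\pf$, using once more that $(-1)^{(n-\ell)\ell}\in\R$.
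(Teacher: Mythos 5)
Your proof is correct and follows essentially the same route as the paper's: both derive (a) and (d) directly from $J=\pf^{-1}\circ\Ext[\ell]{h}$ together with $J^2=\delta_\ell\,\id$, and then obtain the remaining identities from the graded symmetry of $\pf$ and the hermitian symmetry of $\Ext[\ell]{h}$. Your explicit verification that $\delta_\ell\in\R$ (so that it commutes with the conjugation) is a point the paper uses tacitly; making it explicit is a minor but welcome refinement rather than a different argument.
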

\begin{proof}
  We compute
  \(%
  \pf(J(x),-) = (\pf\circ\pf^{-1}\circ\Ext[\ell]{h})(x,-) =
  \Ext[\ell]{h}(x,-) %
  \) and 
  \(%
  \Ext[\ell]{h}(J(x),-) =
  (\Ext[\ell]{h}\circ\pf^{-1}\circ\Ext[\ell]{h})(x,-) =
  (\pf\circ\pf^{-1}\circ\Ext[\ell]{h}\circ\pf^{-1}\circ\Ext[\ell]{h})(x,-)
  = \pf(J^2(x),-) = {\delta_\ell\,\pf(x,-)} %
  \).
  This gives the
  assertions~\ref{firstAss} and~\ref{secondAss}, the rest follows from
  $\Ext[\ell]{h}(y,x)=\gal{\Ext[\ell]{h}(x,y)}$,
  $\pf(y,x)=(-1)^{(n-\ell)\ell}\,\pf(x,y)$ and $J^2=\delta_\ell\,\id$.
\end{proof}

\goodbreak
\section{Composition algebras generated by Hodge operators}
\label{sec:modulesQuat}

From now on, assume $n=2\ell$.  Then $J$ gives an
$\F$-$\sigma$-semilinear endomorphism
\[\textstyle
J\colon \Ext[\ell]V\to\Ext[\ell]V.
\]
We are going to use $J$ to give $\Ext[\ell]{V}$ the structure of a right
module over an associative algebra of dimension $2\,\dim_\R\F$ over~$\R$.
Note that $(-1)^{(n-\ell)\ell} = (-1)^\ell$ holds because $n=2\ell$.

\begin{ndef}[The algebra $\noexpand\K_\ell$]\label{def:algebraK}
  Let
  $\delta_\ell \coloneqq
  (-1)^{\ell}\frac{\det(H)}{N_{\F|\R}(b(v_1\wedge\cdots\wedge v_n))}$
  as in~\ref{squareHodge} and let $\K_\ell$ denote the $\R$-algebra
  consisting of all matrices of the form %
  \[\textstyle
    x = %
    \begin{pmatrix} x_0 & \delta_\ell\,\gal{x_1}\\
      x_1 & \gal{x_0}
    \end{pmatrix}
    \in\F^{2\times 2}.
  \]
  We identify $x_0\in \F$ with the diagonal matrix
  $\begin{pmatrix}x_0&0\\
    0&\gal{x_0}\end{pmatrix}$ and put
  $\bj_\ell^{} \coloneqq \begin{pmatrix}0&\delta_\ell
    \\1&0 \end{pmatrix}$.  Thus %
 \[
 \begin{pmatrix}
   x_0& \delta_\ell\,\gal{x_1}\\
   x_1&\gal{x_0}
 \end{pmatrix}=x_0+ \bj_\ell^{}\,x_1 \,.
 \]
 We also have $\bj_\ell^2=\delta_\ell$ and
 $\bj_\ell^{}\, x=\gal{x}\,\bj_\ell^{}$ for each $x\in \F$.  %
 Note $\dim_\R\K_\ell = 2\,\dim_\R\F$.  For $x_0,x_1\in\F$ we put
 $\kappa(x_0+\bj_\ell\, x_1) \coloneqq \gal{x_0}-\bj_\ell^{}\,x_1$.  %
 The map $\kappa$ is called the \emph{standard involution}
 of~$\K_\ell$. %
 We also write $\gal{x} \coloneqq \kappa(x)$ for $x\in\K_\ell$. %
 As~$\kappa$ extends the action of~$\sigma$ on $\F\cong\set{\left(
     \begin{smallmatrix}
       x_0 & 0 \\
       0 & \gal{x_0}
     \end{smallmatrix}\right)}{x_0\in\F}$, %
 this should cause no confusion.
\end{ndef}

We write $\F^\myTimes\coloneqq \F\smallsetminus\{0\}$ for the
multiplicative group, %
$\F^\sq \coloneqq \set{s^2}{s\in\F}$, and
$\F^\sqt \coloneqq \F^\sq\smallsetminus\{0\}$.  Then
$\F^\myTimes/\F^\sqt$ is the group of non-trivial square classes
of~$\F$; note that this is an elementary abelian $2$-group.  %

\begin{lemm}
  The standard involution is an anti-automorphism of~$\K_\ell$
  satisfying %
  $\kappa(x)\,x=x\,\kappa(x) = \det_\F\left( x_0+\bj_\ell\,x_1 \right)
  = N_{\F|\R}(x_0)- \delta_\ell \, N_{\F|\R}(x_1) \in\R$ for every
  $x = {x_0+\bj_\ell\,x_1} \in \K_\ell$.  %
  Thus the determinant gives a multiplicative quadratic form
  $\det\colon\K_\ell\to\R$.  The corresponding polarization $f_{\det}$
  is degenerate precisely if ${\Char\F=2}$ and\/~$\sigma=\id$; %
  indeed, it vanishes in that case. %
  The quadratic form $\det$ is then degenerate if, and only if, the
  bilinear form~$h$ has discriminant\/~$1\in\F^\myTimes/\F^\sqt$.  %
\end{lemm}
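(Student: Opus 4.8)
The plan is to do every computation inside the matrix algebra $\F^{2\times2}$, in which $\K_\ell$ is a subalgebra, using only the two structural relations $\bj_\ell^2=\delta_\ell$ and $\bj_\ell\,x=\gal{x}\,\bj_\ell$ (for $x\in\F$) together with the fact that $\delta_\ell\in\R$: indeed the Gram matrix $H$ is hermitian, so $\det H=\gal{\det H}$, and $N_{\F|\R}$ is $\R$-valued by definition. From those relations one gets the multiplication rule
\[
  (x_0+\bj_\ell x_1)(y_0+\bj_\ell y_1)=\bigl(x_0y_0+\delta_\ell\,\gal{x_1}\,y_1\bigr)+\bj_\ell\bigl(\gal{x_0}\,y_1+x_1 y_0\bigr)\,.
\]
I would then check that $\kappa$ is an $\R$-linear anti-automorphism either by expanding $\kappa(xy)$ and $\kappa(y)\kappa(x)$ with this rule and comparing, or --- more transparently --- by noting that $\kappa(x)=\left(\begin{smallmatrix}\gal{x_0}&-\delta_\ell\gal{x_1}\\-x_1&x_0\end{smallmatrix}\right)$ is precisely the adjugate $\operatorname{adj}(x)$ of the matrix~$x$; the adjugate is an $\R$-linear anti-automorphism of $\F^{2\times2}$ (on the invertible locus it is $x\mapsto(\det x)x^{-1}$, and $\operatorname{adj}(xy)=\operatorname{adj}(y)\operatorname{adj}(x)$ is then a polynomial identity in the matrix entries) that visibly preserves $\K_\ell$, and $\kappa^2=\id$ is immediate. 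The identity $\operatorname{adj}(x)\,x=x\,\operatorname{adj}(x)=(\det_\F x)\,I$ then specialises to $\kappa(x)\,x=x\,\kappa(x)=\det_\F(x_0+\bj_\ell x_1)=x_0\gal{x_0}-\delta_\ell\,\gal{x_1}\,x_1=N_{\F|\R}(x_0)-\delta_\ell N_{\F|\R}(x_1)\in\R$, and multiplicativity of $\det$ on $\K_\ell$ is inherited from $\F^{2\times2}$.

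Next I would treat the quadratic form $q\coloneqq\det\colon\K_\ell\to\R$. Since $N_{\F|\R}$ is an $\R$-quadratic form on $\F$, so is $q(x)=N_{\F|\R}(x_0)-\delta_\ell N_{\F|\R}(x_1)$; and from $N_{\F|\R}(a+b)-N_{\F|\R}(a)-N_{\F|\R}(b)=\gal{a}\,b+a\,\gal{b}=\trace_{\F|\R}(\gal{a}\,b)$, where $\trace_{\F|\R}\colon z\mapsto z+\gal{z}$, one computes the polarization to be $f_{\det}(x,y)=\trace_{\F|\R}(\gal{x_0}\,y_0)-\delta_\ell\,\trace_{\F|\R}(\gal{x_1}\,y_1)$. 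Writing $\K_\ell=\F\oplus\bj_\ell\F$ as $\R$-spaces, this presents $f_{\det}$ as the orthogonal sum of the $\R$-bilinear form $T\colon(a,b)\mapsto\trace_{\F|\R}(\gal{a}\,b)$ on $\F$ with its rescaling by the nonzero scalar $-\delta_\ell$. For $a\ne0$ the map $b\mapsto\gal{a}\,b$ is a bijection of $\F$, so $T$ is nondegenerate if and only if $\trace_{\F|\R}$ is not identically zero; the latter holds when $\sigma\ne\id$ because $\F|\R$ is separable, and when $\sigma=\id$ it is the map $z\mapsto2z$, which is nonzero exactly when $\Char\F\ne2$. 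As $-\delta_\ell$ is a nonzero scalar, it follows that $f_{\det}$ is nondegenerate unless $\Char\F=2$ and $\sigma=\id$, and in that remaining case $\trace_{\F|\R}$ vanishes identically, whence $f_{\det}=0$.

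Finally, for the last assertion assume $\Char\F=2$ and $\sigma=\id$, so $\F=\R$ and $f_{\det}=0$. When the polarization vanishes, $q$ is additive and satisfies $q(cx)=c^2q(x)$, so its zero set is an $\R$-subspace, and $q$ is degenerate precisely when it is isotropic. Here $q(x_0+\bj_\ell x_1)=x_0^2+\delta_\ell x_1^2$ in characteristic two, which has a nontrivial zero precisely when $\delta_\ell$ is a square in $\F$ --- that is, precisely when $\delta_\ell\in\F^\sqt$, since $\delta_\ell\ne0$ as $h$ is nondegenerate. Now $(-1)^\ell=1$ in characteristic two and $N_{\F|\R}(b(v_1\wedge\dots\wedge v_n))=b(v_1\wedge\dots\wedge v_n)^2\in\F^\sqt$, so $\delta_\ell\in\F^\sqt$ if and only if $\det(H)\in\F^\sqt$; and $\det(H)$ represents the norm class $\disc(h)\in\F^\myTimes/N_{\F|\R}(\F^\myTimes)=\F^\myTimes/\F^\sqt$, so this amounts to $\disc(h)=1$. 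The only genuinely delicate point in the whole argument is the interpretation of ``degenerate'' once the bilinear form $f_{\det}$ has collapsed --- namely, recognising that the pertinent radical is then the entire zero locus of $q$ --- together with the bookkeeping identifying $\det(H)$ with a representative of $\disc(h)$; everything else is direct computation in $\F^{2\times2}$.
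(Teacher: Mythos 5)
Your proof is correct, and it reaches the same conclusions by noticeably slicker means than the paper's own argument. Where the paper disposes of the anti-automorphism property with ``standard matrix computations'', you observe that $\kappa$ is exactly the adjugate map of $\F^{2\times2}$ restricted to $\K_\ell$, which delivers $\kappa(xy)=\kappa(y)\kappa(x)$, $\kappa(x)\,x=x\,\kappa(x)=\det_\F(x)$ and the multiplicativity of $\det$ all at once. For the degeneracy of $f_{\det}$, the paper picks explicit $\R$-bases (using an element $s$ with $s+\gal{s}=1$ when $\sigma\ne\id$) and computes $4\times4$ resp.\ $2\times2$ Gram determinants, whereas you decompose $f_{\det}$ as the orthogonal sum of the trace form $T(a,b)=\gal{a}b+a\gal{b}$ on $\F$ and its rescaling by $-\delta_\ell$, and note that $T$ is either non-degenerate or identically zero according to whether $\trace_{\F|\R}$ vanishes; this unifies the three cases and makes the dichotomy transparent (separability when $\sigma\ne\id$, the factor $2$ when $\sigma=\id$). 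In the remaining inseparable case your treatment is actually more complete than the paper's: you argue both directions of ``$\det$ degenerate iff $\delta_\ell\in\F^\sqt$'' and carry out the bookkeeping identifying the square class of $\delta_\ell$ with $\disc(h)$, while the printed proof only exhibits the isotropic vector when $\delta_\ell$ is a square (and, incidentally, carries a typo, announcing that final case as $\Char\F\ne2$ when the computation $(x_0-\sqrt{\delta_\ell}x_1)^2=x_0^2-\delta_\ell x_1^2$ and the vanishing of the polar form only make sense for $\Char\F=2$). The one point worth a footnote in your version is the polynomial-identity argument for $\operatorname{adj}(xy)=\operatorname{adj}(y)\operatorname{adj}(x)$: the Zariski-density step needs an infinite field, so over a finite $\F$ one should either pass to an extension or simply verify the $2\times2$ identity by direct expansion.
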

\begin{proof}
  Standard matrix computations suffice to verify that $\kappa$ is
  indeed an anti-automorphism, with the properties as claimed. %
  The value of the polar form at
  $(x_0+\bj_\ell\,x_1,y_0+\bj_\ell\,y_1)$ is obtained as %
  \( %
  \det_\F\left(x_0+y_0+\bj_\ell(x_1+y_1)\right)
  -\det_\F\left(x_0+\bj_\ell\,x_1\right)
  -\det_\F\left(y_0+\bj_\ell\,y_1\right) =
  x_0\gal{y_0}+y_0\gal{x_0}-\delta_\ell(x_1\gal{y_1}+y_1\gal{x_1})
  \). %
  Clearly, this polar form is zero if ${\Char\F=2}$
  and\/~$\sigma=\id$. %

  If $\sigma\ne\id$ we pick $s\in\F\smallsetminus\R$ with $s+\gal{s}=1$;
  then $s\gal{s}=s-s^2$. With respect to the basis $(1,0)$, $(s,0)$,
  $(0,1)$, $(0,s)$, the Gram matrix of the polar form becomes
  \[
    \left(
      \begin{smallmatrix}
        2 & 1 & 0 & 0 \\
        1 & 2s\gal{s} & 0 & 0 \\
        0 & 0 & -2\delta_\ell & -\delta_\ell \\
        0 & 0 & -\delta_\ell & -2\delta_\ell s\gal{s} \\
      \end{smallmatrix}
    \right),
  \]
  with determinant
  $\delta_\ell^2(4s\gal{s}-1)^2 = \delta_\ell^2(1-2s)^2 \ne0$.

  If $\sigma=\id$ then the Gram matrix $\left(
    \begin{smallmatrix}
      2 & 0 \\
      0 & -2\delta_\ell
    \end{smallmatrix}
  \right)$ for the polar form is nonsingular precisely if
  $\Char\F\ne2$.

  Finally, assume $\sigma=\id$ and $\Char\F\ne2$. Then~$h$ has
  discriminant~$1$ precisely if~$\delta_\ell$ is a square.  In that
  case, the quadratic form
  $\det_\F(x_0+\bj_\ell\,x_1) = x_0^2-\delta_\ell x_1^2 =
  (x_0-\sqrt{\delta_\ell}x_1)^2$ is isotropic, and then degenerate
  because its polar form is zero.
\end{proof}

\begin{rems}\label{rems:compositionalgebra}
  The isomorphism type of~$\K_\ell$ as an $\R$-algebra does not depend
  on the choice of the orthogonal basis $v_1,\dots,v_n$, and does not
  depend on the choice of~$b$; cp. the discussion
  following~\ref{squareHodge}. %
  In the sequel, we will choose the basis in such a way that the Gram
  matrix~$H$ has a convenient determinant, and then usually adapt~$b$
  such that $b(v_1\wedge\cdots\wedge v_n) = 1$; %
  then $\delta_\ell = (-1)^\ell \det H$. %

  If the polar form $f_{\det}$ is non-degenerate then $\K_\ell$ is
  indeed a \emph{composition algebra}, cf.~\cite[1.5.1]{MR1763974}
  or~\cite[\S\,7.6]{MR780184}. In this case the standard
  involution~$\kappa$ is uniquely determined by the properties of
  being an $\R$-linear anti-automorphism of~$\K_\ell$ and
  inducing~$-\id$ on $\R^\perp$. %
  As such, it is rightfully termed ``standard''. %
  Note that
  $\R^\perp =
  \smallset{x\in\K_\ell\smallsetminus\R}{x^2\in\R}\cup\{0\}$ if
  $\Char\R\ne2$, and $\R^\perp = \smallset{x\in\K_\ell}{x^2\in\R}$
  if~$\Char\R=2$. %
  The norm form of the composition algebra $\K_\ell$ is a Pfister
  form; it is the orthogonal sum of the norm form $N_{\F|\R}$ and its
  scalar multiple $-\delta_\ell N_{\F|\R}$.

  If $f_{\det}$ is degenerate we have $\kappa = \id$ and
  $\det(x)=x\gal{x}=x^2$. These inseparable cases will be discussed
  in~\cite{KramerStroppel-hodge-char2-arXiv} in greater detail. %

  Note that~$\K_\ell$ contains non-invertible elements different
  from~$0$ if, and only if, the factor~$\delta_\ell$ is a norm; i.e.,
  if $(-1)^\ell$ lies in $N_{\F|\R}(\F)$. In that case, we call the
  algebra~$\K_\ell$ \emph{split}; we may (and will) normalize
  $\delta_\ell=1$ then. %
  The case where~$\K_\ell$ is split will be discussed in
  Section~\ref{sec:SplitCases} below.  In particular, the existence of
  idempotents or of nilpotent elements in
  $\K_\ell\smallsetminus\{0,1\}$ leads to the existence of certain
  $\SUV{V,h}$-submodules in $\Ext[\ell]V$,
  see~\ref{splitCaseSplitModule} below. %

\enlargethispage{5mm}%
  If $\delta_\ell$ is not a norm then $\K_\ell|\F$ is either a
  quadratic extension (for $\sigma=\id$; that extension is inseparable
  if $\Char \F=2$) or a quaternion division algebra over $\R$ (for
  $\sigma\neq \id$).  See Section~\ref{sec:nonSplit} below.
\end{rems}

\goodbreak%
\begin{prop}\label{similitudeAlgebraAut}
  Via conjugation in $\End[\R]{\Ext[\ell]V}$, every similitude
  of\/~$h$ induces an automorphism of the $\R$-subalgebra~$M_\ell$ %
  generated by~$J$ and~$\F\,\id$ in $\End[\R]{\Ext[\ell]V}$.
\end{prop}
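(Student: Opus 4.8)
The plan is to show that conjugation by a similitude~$\lambda$ of~$h$ preserves the two-dimensional (over~$\F$) spanning set $\F\,\id + \F\,J$, which by Theorem~\ref{semiSimilitudesAct} already generates~$M_\ell$ as an $\R$-algebra, and then to check that the resulting $\R$-linear map of~$M_\ell$ is multiplicative. First I would recall that a similitude $\lambda\in\GUV{V,h}$ is in particular $\F$-linear, so $\varphi_\lambda = \id$, and that Theorem~\ref{semiSimilitudesAct} (applied with $n=2\ell$, so that $\Ext[n-\ell]\lambda = \Ext[\ell]\lambda$) gives
\[
  (\Ext[\ell]\lambda)\circ J\circ(\Ext[\ell]\lambda)^{-1} = t_\lambda\, J
  \qquad\text{with}\qquad t_\lambda = r_\lambda^{-\ell}\det\lambda \in\F^\myTimes.
\]
Write $c_\lambda$ for conjugation by $\Ext[\ell]\lambda$ on $\End[\R]{\Ext[\ell]V}$. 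Since $\Ext[\ell]\lambda$ is $\F$-linear, $c_\lambda$ fixes every scalar $s\,\id$ with $s\in\F$; and by the displayed identity $c_\lambda(J) = t_\lambda\,J$. Hence $c_\lambda$ maps the $\F$-linear span $\F\,\id + \F\,J$ into itself.

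The next step is to promote this to an algebra automorphism of~$M_\ell$. Because $J$ is $\F$-$\sigma$-semilinear, we have $J\circ(s\,\id) = (\gal s\,\id)\circ J$ for $s\in\F$, so products of elements of $\F\,\id + \F\,J$ again lie in $\F\,\id + \F\,J$; concretely, $J^2 = \delta_\ell\,\id$ by Theorem~\ref{squareHodge}, and this is precisely the multiplication table of~$\K_\ell$ under $J\mapsto\bj_\ell$. Thus $M_\ell = \F\,\id + \F\,J$ is already closed under multiplication and is the $\R$-algebra on which $c_\lambda$ acts. That $c_\lambda$ is a ring homomorphism is automatic, since conjugation by any invertible element of $\End[\R]{\Ext[\ell]V}$ is an $\R$-algebra automorphism of that endomorphism ring; we have just checked that it carries the subalgebra~$M_\ell$ onto itself, and it is bijective there with inverse $c_{\lambda^{-1}} = c_\lambda^{-1}$. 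This gives the asserted automorphism of~$M_\ell$.

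The only point that needs a word of care — and the main (mild) obstacle — is the degenerate case $\Char\F=2$, $\sigma=\id$, where $\F\,\id$ and $\F\,J$ need not be independent and where one should be sure that $M_\ell$ really is spanned by $\id$ and $J$ over~$\F$ and that $t_\lambda$ is well defined (i.e. that $\det\lambda'$ and $r_\lambda$ behave as in~\ref{semiSimilitudesAct} even when the Gram matrix has square determinant). This is handled by observing that the computation in the proof of Theorem~\ref{semiSimilitudesAct} is valid verbatim over any field once an orthogonal basis is fixed — which exists by our standing diagonalizability hypothesis — so the identity $c_\lambda(J) = t_\lambda J$ holds with $t_\lambda\in\F^\myTimes$ regardless of characteristic, and the argument above goes through unchanged. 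A remark could be added that the automorphism so obtained restricts on the subfield $\F\cong\F\,\id$ to the identity, so it is in fact an automorphism of~$M_\ell$ as an $\F$-algebra (acting $\F$-linearly), and that its effect is completely described by $J\mapsto t_\lambda J$; but for the proposition as stated, the paragraphs above suffice.
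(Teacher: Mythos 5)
Your proof is correct, and it starts from the same key input as the paper --- the conjugation formula $\Ext[\ell]\lambda\circ J\circ(\Ext[\ell]\lambda)^{-1}=t_\lambda J$ with $t_\lambda=r_\lambda^{-\ell}\det\lambda\in\F^\myTimes$ from Theorem~\ref{semiSimilitudesAct} --- but it finishes differently. The paper goes on to compute, from $\gal{A}\transp HA=r H$, that $N_{\F|\R}(\det A)=r^{2\ell}$ and hence $N_{\F|\R}(t_\lambda)=1$, and then checks multiplicativity of the induced map on $M_\ell=\F\,\id+\F\,\id\circ J$ by hand (the norm condition is exactly what makes $J\mapsto t_\lambda J$ compatible with $J^2=\delta_\ell\,\id$ and the rule $J\circ(s\,\id)=(\gal{s}\,\id)\circ J$). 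You instead observe that conjugation by an invertible element is automatically an $\R$-algebra automorphism of the ambient ring $\End[\R]{\Ext[\ell]V}$, so one only has to check that the generators of $M_\ell$ are carried into $M_\ell$ (for $\lambda$ and for $\lambda^{-1}$); this makes the norm computation unnecessary. Your route is the more economical one for the proposition as stated; the paper's route has the side benefit of exhibiting the constraint $N_{\F|\R}(t_\lambda)=1$ explicitly, which pins down which automorphisms of $M_\ell\cong\K_\ell$ actually occur and is used in later examples. Two small remarks: your worry in the final paragraph is unfounded, since $\F\,\id$ and $\F\,\id\circ J$ always intersect trivially ($J$ permutes the basic $\ell$-vectors nontrivially, so it is never a scalar), and in any case your argument never uses their independence; and the identification of the multiplication on $M_\ell$ with that of $\K_\ell$ is really an anti-isomorphism (as the paper notes right after the proposition), though this does not affect your argument.
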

\begin{proof}
  Let~$\gamma$ be a similitude with multiplier~$r$. Then
  $\gamma\circ J\circ\gamma^{-1} = Jr^{-\ell}\det\gamma$.  Using a
  matrix $A$ describing~$\gamma$ and the Gram matrix~$H$ for~$h$ with
  respect to the same basis, we note that $\gal{A}\transp H A = rH$,
  and infer $N_{\F|\R}(\det A)=r^{2\ell}$. So
  $N_{\F|\R}(r^{-\ell}\det\gamma) = 1$, and a straightforward
  calculation yields that conjugation by~$\gamma$ yields an algebra
  automorphism, as claimed.
\end{proof}

Obviously, the algebra~$M_\ell$ is anti-isomorphic to~$\K_\ell$, and
then in fact isomorphic because~$\K_\ell$ admits an anti-automorphism.

\begin{defi}\label{def:Kmodule}
  For $v\in\Ext[\ell]V$ we put
  $  v\,\bj_\ell^{} \coloneqq J(v)$.

  In this way, $\Ext[\ell]V$ becomes a $\SUV{V,h}$-$\K_\ell$-bimodule,
  i.e., it becomes a right module over~$\K_\ell$ and $\SUV{V,h}$ acts
  $\K_\ell$-linearly from the left. %
  Choosing an orthogonal basis $v_1,\dots,v_n$ for~$V$ with a fixed
  ordering we obtain a basis~$B$ for $\Ext[\ell]V$ consisting of all
  $v_{j_1}\wedge\dots\wedge v_{j_\ell}$ where $(j_1,\dots,j_\ell)$ is
  an increasing sequence of length~$\ell$ in $\{1,\dots,n\}$.  The
  sequences with $j_1=1$ form a subset~$B_1$ of~$B$, and~$J$ maps each
  element of~$B_1$ to one of $B\smallsetminus B_1$. Moreover, the
  set~$B_1$ %
  forms a basis for the $\K_\ell$-module $\Ext[\ell]V$, showing that
  the latter is a free module.
\end{defi}

\begin{ndef}[The $\alpha$-hermitian form]\label{def:g}
  We define $g\colon\Ext[\ell]{V}\times\Ext[\ell]{V}\to\K_\ell$ by
\[\textstyle
g(u,v) \coloneqq \Ext[\ell]{h}(u,v)+\Ext[\ell]{h}(u,v\bj_\ell^{} %
)\,\bj_\ell^{-1}
= \Ext[\ell]{h}(u,v)+\bj_\ell\,(-1)^\ell\,\pf(u,v)
\,;
\]
see~\ref{HodgeSemiSimilitude} for the description on the right hand
side.  This expression is $\F$-linear in the right argument (i.e.,
$g(u,vs)=g(u,v)s$), and $\F$-$\sigma$-semilinear in the left argument
(in the sense that $g(us,v)=\gal{s}g(u,v)$ for all $s\in \F$ and all
${u,v\in\Ext[\ell]{V}}$). %
Moreover, %
\(%
g(u,v\bj_\ell) = \Ext[\ell]{h}(u,v\bj_\ell) +
\Ext[\ell]{h}(u,(v\bj_\ell)\bj_\ell)\,\bj_\ell^{-1} %
= %
\Ext[\ell]{h}(u,v\bj_\ell) +
\Ext[\ell]{h}(u,v)\,\delta_\ell\,\bj_\ell^{-1} %
g(u,v)\,\bj_\ell^{} %
\), %
so $g$ is indeed $\K_\ell$-linear in the right argument.  Let $\alpha$
denote the following involution: %
\[
\alpha(x_0+ \bj_\ell^{}\,x_1) \coloneqq %
\gal{x_0} + (-1)^\ell \,\bj_\ell^{}\,x_1 \,.
\]
Then~$\alpha$ is an $\R$-linear anti-automorphism of~$\K_\ell$, with
$\alpha(x) = \gal{x}$ for $x\in\F$ and $\alpha(\bj_\ell^{}) =
(-1)^\ell\,\bj_\ell^{}$.  %
Using the fact that $\pf$ is graded symmetric we find $g(v,u) =
\alpha(g(u,v))$.  This shows that $g$ is an $\alpha$-hermitian form on
$\Ext[\ell]V$.
\end{ndef}

Note that $\K_\ell$ is not a field, in general: we need the more
general concept of hermitian forms over rings (e.g.,
see~\cite{MR1096299}).

\begin{prop}\label{gDiagonal}
  The form $g$ is diagonalizable. %
\end{prop}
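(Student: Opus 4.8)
The plan is to exhibit the $\K_\ell$-basis $B_1$ of $\Ext[\ell]V$ from~\ref{def:Kmodule} as an orthogonal basis for~$g$; this immediately gives diagonalizability. Fix the orthogonal basis $v_1,\dots,v_n$ of $(V,h)$ used there, so that $B_1$ consists of the basic $\ell$-vectors $v_1\wedge v_{j_2}\wedge\dots\wedge v_{j_\ell}$ with $1<j_2<\dots<j_\ell\le n$. Recall from~\ref{def:g} that $g(u,v) = \Ext[\ell]{h}(u,v) + \bj_\ell\,(-1)^\ell\,\pf(u,v)$, so it suffices to evaluate the two summands on pairs of elements of $B_1$.

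For the off-diagonal entries, take distinct $e,e'\in B_1$. By~\ref{computeHodge} the form $\Ext[\ell]{h}$ is diagonal with respect to the full basis of ascending basic $\ell$-vectors, hence $\Ext[\ell]{h}(e,e')=0$. For the Pfaffian term, since $n=2\ell$ we have $\pf(e,e') = b(e\wedge e')$ by~\ref{def:Pfaffian}; but both $e$ and $e'$ have $v_1$ as their first wedge factor, so $e\wedge e'$ contains $v_1$ twice and therefore vanishes. Thus $g(e,e')=0$.

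For the diagonal entries, fix $e = v_1\wedge v_{j_2}\wedge\dots\wedge v_{j_\ell}\in B_1$. Again by~\ref{def:Pfaffian}, $\pf(e,e)=0$ because $n=2\ell$ and $e$ is a basic $\ell$-vector; and by~\ref{computeHodge}, $\Ext[\ell]{h}(e,e) = h(v_1,v_1)\,h(v_{j_2},v_{j_2})\cdots h(v_{j_\ell},v_{j_\ell})$. Since $h$ is non-degenerate and the $v_i$ are orthogonal, each factor $h(v_i,v_i)$ is a nonzero element of the fixed field~$\R$ (it equals its own $\sigma$-conjugate), so $g(e,e) = \Ext[\ell]{h}(e,e)\in\R^\myTimes$. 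In particular $g(e,e)$ is fixed by the involution~$\alpha$, as it must be for an $\alpha$-hermitian form.

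Putting these together, $B_1$ is an orthogonal basis of the right $\K_\ell$-module $\Ext[\ell]V$ for the form~$g$, with diagonal entries in $\R^\myTimes$; hence $g$ is diagonalizable. The computation is short, and there is no real obstacle; the only point worth flagging is the observation that every member of $B_1$ shares the wedge factor $v_1$, which is exactly what forces both $\pf$ and $\Ext[\ell]{h}$ to kill all the cross terms simultaneously.
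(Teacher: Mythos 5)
Your proof is correct and follows essentially the same route as the paper: both take the $\K_\ell$-basis of basic $\ell$-vectors all beginning with $v_1$, observe that $\Ext[\ell]{h}$ is diagonal on these by~\ref{computeHodge}, and that the Pfaffian term vanishes identically because the shared factor $v_1$ kills every wedge $e\wedge e'$. The extra observation that the diagonal entries lie in $\R^\myTimes$ is a harmless (and correct) bonus not needed for the statement.
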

\begin{proof}
  In fact, for any orthogonal basis $v_1,\dots,v_n$ of~$V$, consider
  the vector %
  $w_\gamma \coloneqq v_1\wedge v_{\gamma_2}\wedge\dots\wedge
  v_{\gamma_\ell}$ for each increasing sequence
  $\gamma = (\gamma_2,\dots,\gamma_\ell)$. Then the $w_\gamma$ form a
  basis for the free $\K_\ell$-module
  $\Ext[\ell]V$. By~\ref{computeHodge}, we have
  $\Ext[\ell]h(w_\gamma,w_\beta) = 0$ if $\gamma\ne\beta$, and
  $\Ext[\ell]h(w_\gamma,w_\gamma) =
  h(v_1,v_1)h(v_{\gamma_2},v_{\gamma_2})\dots
  h(v_{\gamma_\ell},v_{\gamma_\ell})$ for each admissible~$\gamma$.

  As every $w_\gamma$ is a wedge product starting with the same
  vector~$v_1$, we have $\pf(w_\gamma,w_\beta) = 0$, and
  $g(w_\gamma,w_\beta) = \Ext[\ell]h(w_\gamma,w_\beta)$.
 \end{proof}

\begin{rems}\label{alphaVSkappa}
  The involution $\alpha$ coincides with the standard involution
  $\kappa$ if $\Char\F=2$ or if~$\ell$ is odd.  If $\sigma\ne\id$ and
  $\ell$ is even we pick any $a\in\F^\myTimes$ with $\gal{a}=-a$ and
  obtain $\alpha(x) = a^{-1}\kappa(x)a$ for all $x\in\K_\ell$. %
  If $\sigma=\id$ and $\ell$ is even then $\alpha=\id$. %
  The idea of (re-)constructing the hermitian form~$g$ dates back
  to~\cite[p.\,266]{MR0001957}, cf.~\cite[4.4]{MR2942723}
  and  ~\cite[2.9]{MR3871471}. %
\end{rems}

\begin{lemm}\label{isotropic}\label{gAnisotropicIfhIs}
  \begin{enumerate}
  \item For $X,Y\in\Ext[\ell]{V}$ we have
    \[
      g(X,Y)=0 \iff \pf(X,Y)=0=\Ext[\ell]{h}(X,Y) \,.
    \]
    In particular, the form~$g$ is not degenerate (since~$h$ is not
    degenerate, see~\ref{computeHodge}). %
  \item
    If\/ $\Ext[\ell]{h}$ is anisotropic then $g$ is anisotropic,
    too. %
  \item If\/ $h$ is isotropic then both~$\Ext[\ell]h$ and~$g$ are
    isotropic, as well.
  \end{enumerate}
\end{lemm}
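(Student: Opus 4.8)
The plan is to prove the three assertions in order, using Lemma~\ref{HodgeSemiSimilitude} to translate statements about $g$ into statements about $\Ext[\ell]h$ and $\pf$, since $g(X,Y) = \Ext[\ell]h(X,Y) + \bj_\ell\,(-1)^\ell\pf(X,Y)$ with the two summands lying in the complementary $\R$-subspaces $\F$ and $\bj_\ell\F$ of $\K_\ell$.

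For part~(a), the equivalence $g(X,Y)=0 \iff \pf(X,Y)=0=\Ext[\ell]h(X,Y)$ is almost immediate from the defining formula: since $\F\cap\bj_\ell\F = \{0\}$ inside $\K_\ell$, the vanishing of $g(X,Y) = \Ext[\ell]h(X,Y) + \bj_\ell(-1)^\ell\pf(X,Y)$ forces both components to vanish, and conversely. To deduce non-degeneracy of $g$, suppose $g(X,-)\equiv 0$; then in particular $\Ext[\ell]h(X,-)\equiv 0$, and since $\Ext[\ell]h$ is non-degenerate by~\ref{computeHodge}, we get $X=0$. (One should note $g$ is $\K_\ell$-linear in the right argument and $\sigma$-semilinear on the left, so ``non-degenerate'' means the induced map to the dual $\K_\ell$-module is injective, equivalently bijective by a dimension count; the argument above gives injectivity on the left, and the $\alpha$-hermitian symmetry $g(Y,X)=\alpha(g(X,Y))$ gives the right side.)

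For part~(b), assume $\Ext[\ell]h$ is anisotropic, and let $X\in\Ext[\ell]V$ with $g(X,X)=0$. By part~(a) this gives $\Ext[\ell]h(X,X)=0$, and since $\Ext[\ell]h$ is anisotropic, $X=0$; so $g$ is anisotropic. For part~(c), assume $h$ is isotropic, say $h(u,u)=0$ for some $u\ne0$. Extend $u$ to a basis; the difficulty is that the explicit formulas of~\ref{computeHodge} require an \emph{orthogonal} basis, which need not exhibit $u$ as a basis vector when $u$ is isotropic. The cleanest route is to use a hyperbolic pair: non-degeneracy lets us find $u'$ with $h(u,u')\ne0$, and the span $\langle u,u'\rangle$ is a non-degenerate hyperbolic plane, whose orthogonal complement $W$ carries a non-degenerate form $h|_W$; pick an orthogonal basis of $W$ and adjoin $u,u'$. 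Then take $X = u\wedge w_1\wedge\cdots\wedge w_{\ell-1}$ for suitable orthogonal basis vectors $w_i$ of $W$: one computes $\Ext[\ell]h(X,X) = \det$ of a block matrix whose first row and column (the $u$-entries) vanish because $h(u,u)=0=h(u,w_i)$, hence $\Ext[\ell]h(X,X)=0$, so $\Ext[\ell]h$ is isotropic; and $\pf(X,X)=0$ automatically since $n=2\ell$ (see~\ref{def:Pfaffian}). Thus $g(X,X) = 0+0 = 0$ and $g$ is isotropic as well.

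\textbf{Main obstacle.} The one genuinely delicate point is part~(c): the formulas from~\ref{computeHodge} and~\ref{gDiagonal} are stated only for orthogonal bases and, as the remark after~\ref{computeHodge} warns, cannot be applied to a wedge $v_1\wedge\cdots\wedge v_\ell$ spanning a subspace on which $h$ degenerates. The fix is to not pass to such a degenerate subspace at all, but rather to produce an isotropic wedge-vector $X$ using one isotropic basis vector together with $\ell-1$ orthogonal vectors from a non-degenerate complement, and to verify $\Ext[\ell]h(X,X)=0$ directly from the determinant formula $\Ext[\ell]h(v_{i_1}\wedge\cdots,v_{j_1}\wedge\cdots) = \det(h(v_{i_r},v_{j_s}))$ — whose vanishing here is forced by a zero row. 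Everything else reduces to the bookkeeping that $\F$ and $\bj_\ell\F$ meet trivially in $\K_\ell$.
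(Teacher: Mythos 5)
Your parts (a) and (b) coincide with the paper's argument: compare coefficients in $\K_\ell=\F\oplus\bj_\ell\,\F$, then read off non-degeneracy of~$g$ from that of $\Ext[\ell]{h}$ (established in~\ref{computeHodge}). For part (c) you take a genuinely different, and in fact cleaner, route. The paper first manufactures an orthogonal basis $v_1,\dots,v_n$ with $h(v_1,v_1)=-h(v_2,v_2)\ne0$ starting from a hyperbolic pair --- which forces a separate and somewhat fiddly case analysis when $\Char\F=2$ --- and then exhibits the isotropic vector as a \emph{sum} $w_1+w_2$ of two basic $\ell$-vectors of opposite norms. You instead produce a \emph{single} decomposable vector $X=u\wedge w_1\wedge\dots\wedge w_{\ell-1}$ whose underlying $\ell$-space meets its own orthogonal complement in~$u$, and obtain $\Ext[\ell]{h}(X,X)=0$ from a zero row in the Gram determinant and $\pf(X,X)=0$ from decomposability (\ref{def:Pfaffian}); no distinction by characteristic is needed, and this is essentially the mechanism that reappears later in~\ref{lambdaFibres}. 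One small repair: you should not insist on an \emph{orthogonal} basis of $W=\{u,u'\}^{\perp}$. In characteristic two the restriction of a diagonalizable form to the orthogonal complement of the plane $u\F+u'\F$ can be alternating (for instance, the orthogonal sum of the Gram blocks $\left(\begin{smallmatrix}0&1\\1&1\end{smallmatrix}\right)$ and $\left(\begin{smallmatrix}0&1\\1&0\end{smallmatrix}\right)$ is diagonalizable, but the second block is alternating), and then $W$ has no orthogonal basis. This costs you nothing: your determinant computation uses only $h(u,u)=0=h(u,w_i)$, so any linearly independent $w_1,\dots,w_{\ell-1}\in u^{\perp}$ with $u\wedge w_1\wedge\dots\wedge w_{\ell-1}\ne0$ serve equally well; mutual orthogonality of the $w_i$ is never used.
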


\begin{proof}
  Comparing coefficients in $\K_\ell=\F\oplus\bj_\ell\F$ we obtain the
  first assertion. The second one follows immediately.

  The form $\Ext[1]h$ coincides with~$h$. For $n =\dim{V}$ the
  form~$\Ext[n]{h}$ on $\Ext[n]V \cong \F^1$ will be anisotropic,
  regardless of what the (non-degenerate) form~$h$ may be. %
    
  So consider $2\le\ell<n$, and assume that $h$ is isotropic. %
  Then~$V$ is the orthogonal sum of an anisotropic space and a
  non-trivial split space~$S$, %
  see~\cite[III, (1.1), p.\,56]{MR0506372}. %
  In the split space~$S$, we find $x,y$ such that $h(x,x)=0$ and
  $h(x,y)=1$, see~\cite[I, (6.3), p.\,56]{MR0506372}. %

  If $\Char\F\ne2$, we may also assume $h(y,y)=0$. We then put
  $v_1\coloneqq x+y$ and $v_2\coloneqq x-y$, and extend to an
  orthogonal basis $v_1,v_2,\dots,v_n$ for~$V$.

  If $\Char\F=2$, there are two possibilities: either the restriction
  of~$h$ to~$S$ is alternating, %
  or it is possible to choose $x,y\in S$ such that
  $h(x,x)=0\ne h(y,y)$ and $h(x,y)=1$. %
  In the first case, we find $u\in S^\perp\smallsetminus\{0\}$ because the
  (diagonalizable and non-degenerate) form~$h$ is not alternating. %
  In the second case, we just take $u=0$. %
  We now put $v_1\coloneqq y+u$ and $v_2\coloneqq xh(v_1,v-y_1)$. %
  Then $h(v_2,v_2)=h(v_1,v_1)\ne0$, and $h(v_1,v_2)=0$.  So there
  exists an orthogonal basis $v_1,v_2,\dots,v_n$ for~$V$.
    
  If $\ell=2$, %
  put $w_1 \coloneqq v_1\wedge v_3$ %
  and $w_2 \coloneqq v_2\wedge v_3$. %
  If $\ell>2$, %
  put $w_1 \coloneqq v_1\wedge v_3\wedge\dots\wedge v_{\ell+1}$ %
  and $w_2 \coloneqq v_2\wedge v_3\wedge\dots\wedge v_{\ell+1}$. %
  Then $\Ext[\ell]{h}(w_1,w_1) = -\Ext[\ell]{h}(w_2,w_2)$ yields
  $\Ext[\ell]{h}(w_1+w_2,w_1+w_2) = 0$, so $\Ext[\ell]{h}$ is
  isotropic.

  Finally, recall from~\ref{def:Pfaffian} that
  $\pf(w_1+w_2,w_1+w_2) = 0$, so the form~$g$ is isotropic, as well.
\end{proof}

\begin{rems}\label{exthMayBecomeIsotropic}
  It may happen that~$h$ is anisotropic but $\Ext[\ell]{h}$ becomes
  isotropic. For instance, this happens whenever we start with an
  anisotropic bilinear form~$h$ on a vector space of dimension~$4$
  over a $p$-adic field because there are no anisotropic quadratic
  forms in more than four variables over such a field
  (see~\cite[VI\,2.2]{MR2104929}). %
  
  For more explicit examples, take~$h_1$ and~$h_2$ as the
  polarizations of the quadratic forms $x_0^2-2x_1^2+3x_2^2-6x_3^2$
  and $x_0^2+2x_1^2+10x_3^2-5x_4^2$, respectively, over~$\QQ$.  %
  Both forms are anisotropic; the discriminant of~$h_1$ is a square
  while that of~$h_2$ is not. %
  Both~$\Ext[2]{h_1}$ and~$\Ext[2]{h_2}$ are isotropic. %
  For~$h_2$, the corresponding form~$g$ is isotropic (over
  $\K_2\cong\QQ(\sqrt{-1})$), as well,
  see~\ref{exam:anisotropicNonSplit} below.
\end{rems}

\begin{nthm}[The homomorphism]\label{homoSUtoSaU}
  From the definition of~$g$ it is clear that $\SUV{V,h}$
  preserves~$g$ and that $\gUV{V,h}$ acts by semi-similitudes of~$g$,
  see~\ref{semiSimilitudesAct}. %
  Thus we have, for $\dim(V)=n=2\ell$, constructed a homomorphism %
  \( %
  \eta^{}_\ell\colon \gUV{V,h}\to\gUV{\Ext[\ell]V,g} \) %
  which restricts to
  \[
  \eta^{}_\ell|_{\SUV{V,h}}\colon \SUV{V,h}\to\UV{\Ext[\ell]V,g} \,.
  \]
  Of course, we have to read $\gUV{V,h}$ as $\gOV{V,h}$ if
  $\sigma=\id$, and $\gUV{\Ext[\ell]V,g}$ as $\gOV{\Ext[\ell]V,g}$ if
  $\sigma=\id$ and~$\ell$ is even. 
\end{nthm}

\begin{lemm}\label{kernelEta}
  The kernel of $\eta^{}_\ell$ consists of all scalar multiples 
  $s\,\id$ of the identity where $\gal{s}s=1=s^\ell$. %
  The center of the image $\eta^{}_\ell(\SUV{V,h})$ has
  order at most~$2$ because $1=\det(s\,\id)$ already implies
  $s^n=s^{2\ell}=1$. %
\end{lemm}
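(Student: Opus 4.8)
The plan is to compute the kernel of $\eta_\ell$ directly. First I would note that $\eta_\ell$ is induced by the natural action of $\gUV{V,h}$ on $\Ext[\ell]V$; for $\lambda\in\gUV{V,h}$ the image $\eta_\ell(\lambda)$ is the map $\Ext[\ell]\lambda$ (which is $\K_\ell$-semilinear by construction of the $\K_\ell$-module structure, compare \ref{def:Kmodule} and \ref{semiSimilitudesAct}). So $\lambda\in\ker\eta_\ell$ precisely when $\Ext[\ell]\lambda = \id$ on $\Ext[\ell]V$. The first key step is thus to determine which $\lambda\in\GL{V}$ satisfy $\Ext[\ell]\lambda=\id$: using an orthogonal basis $v_1,\dots,v_n$ and the basic $\ell$-vectors $v_{i_1}\wedge\dots\wedge v_{i_\ell}$, the condition $\Ext[\ell]\lambda(v_{i_1}\wedge\dots\wedge v_{i_\ell}) = v_{i_1}\wedge\dots\wedge v_{i_\ell}$ for all increasing $\ell$-tuples forces $\lambda$ to be scalar, say $\lambda = s\,\id$ with $s\in\F^\myTimes$ (here $2\le\ell\le n-2$ or the trivial edge cases; in any case $\Ext[\ell]$ of a scalar $s$ is multiplication by $s^\ell$, and conversely nothing but a scalar can act trivially on all basic $\ell$-vectors when $1\le\ell\le n-1$). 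Then $\Ext[\ell]\lambda = s^\ell\,\id$, so the condition becomes $s^\ell = 1$. The membership $s\,\id\in\gUV{V,h}$ (equivalently, that $s\,\id$ is a semi-similitude) forces $\gal{s}s\,h(v,w)=h(s v,s w)=r\,\varphi(h(v,w))$; since $s\,\id$ is $\F$-linear its companion $\varphi$ is trivial, so $r=\gal{s}s=N_{\F|\R}(s)$, and $s\,\id\in\SUV{V,h}$ exactly when additionally $\det(s\,\id)=s^n=1$ and $N_{\F|\R}(s)=1$. This already gives the description of $\ker\eta_\ell$ and of $\ker(\eta_\ell|_{\SUV{V,h}})$ as the set of $s\,\id$ with $\gal{s}s=1=s^\ell$.

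The second part — bounding the center of $\eta_\ell(\SUV{V,h})$ — I would handle as follows. The center of $\SUV{V,h}$ itself consists of the scalars $s\,\id$ with $\gal{s}s=1$ and $s^n=1$ (for $n\ge 3$ this follows from Schur-type arguments, or one can cite the standard structure of unitary groups; but in fact for the statement we only need the weaker observation that the image of the center is what we must bound, and any central element of the image pulls back to something acting centrally, hence scalarly, on the irreducible pieces). The point is that $\eta_\ell$ maps $\Z{\SUV{V,h}}$ onto the center of the image, and by the first part two scalars $s\,\id$, $s'\,\id$ in $\SUV{V,h}$ have the same image under $\eta_\ell$ iff $(s/s')^\ell = 1$; combined with $s^n = s^{2\ell} = 1$ this gives $(s/s')^{2\ell}=1$ together with $(s/s')^\ell=1$, so the image of the center is a quotient of a group in which every element squares to the identity — actually one sees $s^{2\ell}=1$ and membership in the kernel needs only $s^\ell=1$, so $\eta_\ell(s\,\id)$ has order dividing $2$ in the image. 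Hence $\eta_\ell(\Z{\SUV{V,h}})$, which contains the center of $\eta_\ell(\SUV{V,h})$, has exponent at most $2$; and since it is cyclic (being a quotient of the cyclic group of such scalars $s$), it has order at most $2$.

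The main obstacle I anticipate is the clean justification that the only $\F$-linear maps acting trivially on $\Ext[\ell]V$ (for $1\le \ell\le n-1$) are scalars, and — more delicately — that every \emph{central} element of the \emph{image} $\eta_\ell(\SUV{V,h})$ actually comes from a central (hence scalar) element of $\SUV{V,h}$, rather than merely from an element acting $\K_\ell$-centrally on $\Ext[\ell]V$ without being central in $\SUV{V,h}$. This last point is really a statement about the faithfulness of the action modulo scalars, i.e. that $\eta_\ell$ is, up to the explicitly identified kernel, an embedding; the argument in the paper apparently sidesteps the subtlety by the blunt observation ``$1=\det(s\,\id)$ already implies $s^n=s^{2\ell}=1$'', i.e. one does not need the full center of $\SUV{V,h}$ — one only needs that any scalar in the kernel satisfies $s^n=1$, whence combined with $s^\ell=1$ (the kernel condition, using $n=2\ell$) the relevant group of scalars modulo kernel has order at most $2$. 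So I would keep the argument elementary: identify the kernel by the explicit computation with basic $\ell$-vectors on an orthogonal basis, observe $n=2\ell$ forces $s^n=s^{2\ell}=(s^\ell)^2$, and conclude the order-$2$ bound purely arithmetically, avoiding any appeal to the abstract center of the unitary group.
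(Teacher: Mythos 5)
Your plan follows the paper's proof in outline: reduce membership in the kernel to the condition $\Ext[\ell]\lambda=\id$, show this forces $\lambda=s\,\id$, read off $s^\ell=1$ (with $\gal{s}s=1$ coming from unitarity), and get the order-$2$ bound from $s^{2\ell}=(s^\ell)^2$ exactly as in your closing paragraph. The one substantive step you leave open --- that trivial action on $\Ext[\ell]V$ forces $\lambda$ to be scalar --- is precisely where the paper does its work, and it does so by a different route from the one you sketch: instead of arguing from the basic $\ell$-vectors of a single orthogonal basis, it identifies the decomposable $\ell$-vectors (up to scalars) with the points of $\Gr[\F]{\ell}{V}$, notes that a kernel element fixes every $\ell$-dimensional subspace and hence, by taking intersections, every $1$-dimensional subspace of $V$, and then invokes the classical fact (Baer) that a semilinear bijection fixing every point of $\PG{V}$ is $s\,\id$ with trivial companion. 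That argument buys two things your sketch still owes: it handles the semilinear elements of $\gUV{V,h}$ in one stroke (your computation is phrased for $\lambda\in\GL{V}$; for $\varphi$-semilinear $\lambda$ one must also observe that $\Ext[\ell]\lambda=\id$ forces $\varphi=\id$), and it avoids having to verify that fixing only the basic $\ell$-vectors of one basis already pins $\lambda$ down --- that claim is true, but it needs the same intersection argument to show $\lambda$ is diagonal, followed by the relations $c_{i_1}\cdots c_{i_\ell}=1$ to force all eigenvalues equal. On the point you rightly flag as delicate, the paper is exactly as terse as you suspect about why the \emph{center of the image}, rather than merely the image of the center, has order at most~$2$: its proof ends with ``the rest is clear'', so your purely arithmetic reading of that step matches the authors' intent.
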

\begin{proof}
  Via $(v_1\wedge\dots\wedge v_\ell)\F \mapsto v_1\F+\dots+v_\ell\F$
  we identify $\Gr[\F]1{\Ext[\ell]V}$ with $\Gr[\F]\ell{V}$. If some
  $A\in\gL{V}$ with companion~$\mu_A$ fixes each element
  of~$\Gr[\F]\ell{V}$ then it also fixes each element of~$\Gr[\F]1V$
  because these are obtained as intersections of $\ell$-dimensional
  spaces. %
  As~$\F$ is commutative, we find (by %
  evaluating~$A$ on a basis $b_1,\dots, b_n$ and at $b_1+\cdots+b_nt$
  for $t\in\F$, see~\cite[III.1, Prop.\,3(b), p.\,43]{MR0052795}) that
  there exists $s\in\F$ such that $A=s\,\id$ and $ts=s\mu_A(t)$. In
  particular, $A$ is linear. %
  Now~$A$ is trivial on~$\Ext[\ell]V$ precisely if $s^\ell=1$. %
  The rest is clear.
\end{proof}

\begin{ndef}[Example: $\noexpand\SaU{n}{\noexpand\HH}$]\label{def:SaU}
  Let $\HH = \RR+i\RR+j\RR+k\RR$ be the (essentially unique)
  quaternion division algebra over~$\RR$. Then $\CC = \RR+i\RR$ is the
  field of complex numbers, and we obtain $\HH = \CC+j\CC$. Composing
  the standard involution $\kappa\colon a\mapsto\gal{a}$ with the
  inner automorphism $a\mapsto i^{-1}ai = -iai$, we obtain the
  involutory anti-automorphism $\alpha\colon x+jy \mapsto \gal{x}+jy$
  (here $x,y\in\CC$). %
  On $\HH^n$, we now define the $\alpha$-hermitian form $f\colon
  \HH^n\times\HH^n\to\HH$ by $f\bigl((a_1,\dots,a_n),(b_1,\dots b_n)\bigr)
  \coloneqq
  \alpha(a_1)\,b_1+\cdots+\alpha(a_n)\,b_n$.
  The corresponding unitary group is denoted by $\SaU{n}{\HH,f}$, its
  Lie algebra by~$\rType{D}n{\HH}$ (in the notation of Tits~\cite{MR0218489}; %
  Helgason~\cite{MR514561} denotes the corresponding Lie algebras
  by~$\mathfrak{so}^*(2n)$). %
  We also note that the form~$f$ can be replaced by the skew-hermitian
  form $if$, without changing the unitary group (but changing the
  involution back to the standard one). 
\end{ndef}

\removelastskip%
\enlargethispage{13mm}%
\begin{exam}\label{rem:SaU}
  An interesting special case of our construction occurs if $\F=\CC$,
  $\R=\RR$, $\ell=2$, and $h$ is a hermitian form of Witt index~$1$
  on~$\CC^4$. The algebra~$\K$ is then isomorphic to~$\HH$, the
  involution~$\alpha$ has a $3$-dimensional space of fixed points, and
  is thus a conjugate of the involution used in~\ref{def:SaU}. Our
  result~\ref{homoSUtoSaU} thus provides a covering
  $\SU[]4{\CC,1}\to\SaU{3}{\HH,g}$ corresponding to the isomorphism
  from the Lie algebra~$\rType{A}3{\CC,1}$ onto~$\rType{D}3{\HH}$ (in
  the notation of Tits~\cite{MR0218489}; %
  Helgason~\cite{MR514561} denotes the corresponding Lie algebras
  by~$\mathfrak{su}(3,1)$ and~$\mathfrak{so}^*(6)$, respectively).

  The group $\U[]4{\CC,1}$ is a semidirect product of its commutator
  group $\SU[]4{\CC,1}$ with a group isomorphic to $\RR/\ZZ$. %
  With respect to a basis $v_1,v_2,v_3,v_4$ with
  $h(v_1,v_1)= h(v_2,v_2)= h(v_3,v_3)= 1$ and $h(v_4,v_4)=-1$, we
  consider the linear maps $\gamma_s$ defined by $\gamma_s(v_1)=v_1$,
  $\gamma_s(v_2)=v_2$, $\gamma_s(v_3)=v_3$, and $\gamma_s(v_4)=v_4s$,
  with $s\gal{s}=1$. Then $\set{\gamma_s}{s\in\CC,s\gal{s}=1}$ is a
  complement for $\SU[]4{\CC,1}$ in $\U[]4{\CC,1}$. %
  It turns out that $\eta_2(\gamma_s)$ is an $\HH$-semilinear map with
  companion $\varphi_s\colon x+\bj y \mapsto x+\bj sy$
  (see~\ref{similitudeAlgebraAut}), fixing each vector in the
  $\HH$-basis $v_1\wedge v_2$, $v_1\wedge v_3$, $v_2\wedge v_3$. %

  We also remark that the $\CC$-semilinear map~$\lambda$ with
  companion~$\sigma$ and fixing each $v_k$ induces the
  $\HH$-semilinear map $\eta_2(\lambda)$ with companion
  $\varphi\colon x+\bj y\mapsto \gal{x}+\bj\gal{y}$ fixing each one of
  $v_1\wedge v_2$, $v_1\wedge v_3$, $v_2\wedge v_3$.
\end{exam}

\goodbreak%
\section {The split cases}\label{sec:SplitCases}

We will call $\K_\ell$ \emph{split} whenever it contains divisors of
zero. This extends the established terminology for composition
algebras. Recall that $\K_\ell$ is split precisely if $\delta_\ell$ is
a norm: $\delta_\ell=\gal{s}s$ for some~$s\in\F^\myTimes$ %
(and this happens precisely if~$h$ has
discriminant~$(-1)^\ell$).

In that case, we may assume $\delta=1$ without loss of generality.  In
fact, if we replace our isomorphism $b\colon \Ext[n]V\to\F$ by~$sb$
then the Hodge operator $J$ is replaced by
$J\,s^{-1}\colon X\mapsto J(X)s^{-1}$, and we have
$(J\,s^{-1})^2=\id$.  %
So the subalgebra $M_\ell$ of $\End[\R]{\Ext[\ell]V}$ remains the same
(see~\ref{similitudeAlgebraAut}), while the algebra $\K_\ell$ is
replaced by its conjugate $\left(
    \begin{smallmatrix}
      1 & 0 \\
      0 & s
    \end{smallmatrix}
  \right)^{-1} %
  \K_\ell %
  \left(
    \begin{smallmatrix}
      1 & 0 \\
      0 & s
    \end{smallmatrix}
  \right) %
  = \set{\left(
    \begin{smallmatrix}
      y_0 & \gal{y_1} \\
      y_1 & \gal{y_0}
    \end{smallmatrix}
  \right)}{y_0,y_1\in\F}
$.

\begin{ndef}[Convention]
  For the rest of this section, we will always assume that we have
  normalized~$b$ in such a way that $J^2=\id$ whenever $\K_\ell$ is
  split (so $\delta_\ell=1$).  Then $z\coloneqq 1+\bj_\ell \in\K_\ell$
  satisfies $z^2=2z$.  Thus $z$ is nilpotent if $\Char\F=2$, and
  $p\coloneqq\frac12z$ is an idempotent if $\Char\F\ne2$.
\end{ndef}

\begin{lemm}\label{splitCases}
  If\/ $\K_\ell$ is split then we have one of the following:
  \begin{enumerate}
  \item $\sigma\ne\id$: then $\dim_\R\K_\ell=4$ and
    $\K_\ell\cong\R^{2\times2}$ is the split quaternion algebra
    over~$\R$.
    On $\R^{2\times2}$ the standard involution is given by
    $\kappa\left(
      \begin{smallmatrix}
        a & b \\
        c & d
      \end{smallmatrix}\right)
    = \left(
      \begin{smallmatrix}
        d & -b \\
        -c & a
      \end{smallmatrix}\right)$. 
  \item $\sigma=\id$ and $\Char\F\ne2$: then
    $\K_\ell\cong\F\times\F\cong\F[X]/(X^2-1)$ is the two-dimensional
    split composition algebra over~$\F$.
    On $\F\times\F$ the standard involution is $\kappa(a,b)=(b,a)$. 
  \item $\sigma=\id$ and $\Char\F=2$: then
    $\K_\ell\cong\F[X]/(X^2)$ is a local ring, and
    the standard involution is the identity. 
  \end{enumerate}
\end{lemm}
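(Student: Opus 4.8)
The plan is to split into the three cases according to the behaviour of $\sigma$ and $\Char\F$, and in each case to exhibit an explicit $\R$-algebra isomorphism from $\K_\ell$ (normalised so that $\delta_\ell=1$) onto the claimed model. Throughout I use the presentation $x = x_0+\bj_\ell\,x_1$ with $x_0,x_1\in\F$, together with the relations $\bj_\ell^2=\delta_\ell=1$ and $\bj_\ell\,x=\gal x\,\bj_\ell$ recorded in~\ref{def:algebraK}. The key structural observation is that $z\coloneqq 1+\bj_\ell$ satisfies $z^2=2z$, so that the ``idempotent vs.\ nilpotent'' dichotomy in the Convention above governs which of the three cases we are in.

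First I would treat case~\textbf{b}, $\sigma=\id$ and $\Char\F\ne2$. Here $\K_\ell$ is commutative and $\bj_\ell$ is a central element with $\bj_\ell^2=1$, so $X\mapsto\bj_\ell$ induces a surjective $\F$-algebra homomorphism $\F[X]/(X^2-1)\to\K_\ell$, which is an isomorphism by counting dimensions ($\dim_\F\K_\ell=2$). Since $2$ is invertible, the Chinese Remainder Theorem (or the idempotents $p=\tfrac12(1+\bj_\ell)$ and $1-p$) gives $\F[X]/(X^2-1)\cong\F\times\F$, and under $a+\bj_\ell b\mapsto(a+b,a-b)$ the standard involution $\kappa(a+\bj_\ell b)=a-\bj_\ell b$ goes to $(a+b,a-b)\mapsto(a-b,a+b)$, i.e.\ to the swap $\kappa(a,b)=(b,a)$, as claimed. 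Case~\textbf{c}, $\sigma=\id$ and $\Char\F=2$, is entirely parallel: $X\mapsto\bj_\ell-1 = 1+\bj_\ell = z$ induces an $\F$-algebra map $\F[X]/(X^2)\to\K_\ell$ (using $z^2=2z=0$), again an isomorphism by dimension count; it is local with maximal ideal $z\F$, and $\kappa$ is the identity on $\K_\ell$ because $\kappa(1)=1$, $\kappa(\bj_\ell)=\bj_\ell$ (as $-\bj_\ell=\bj_\ell$), so it acts trivially on the $\F$-span of $1$ and $\bj_\ell$.

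The main work is case~\textbf{a}, $\sigma\ne\id$, where $\F|\R$ is a separable quadratic extension and $\delta_\ell=1$ means $\K_\ell$ contains a zero divisor. Here I would produce an explicit $\R$-algebra isomorphism $\K_\ell\to\R^{2\times2}$. One clean way: choose $s\in\F$ with $s+\gal s=1$ (so $s\notin\R$), and send $\F\ni x_0\mapsto\bigl(\begin{smallmatrix} x_0 & 0\\ 0 & \gal{x_0}\end{smallmatrix}\bigr)$ — but this has entries in $\F$, not $\R$. Instead, fix an $\R$-basis $1,t$ of $\F$ with $t^2=t-c$ for suitable $c\in\R$ (possible since $\F|\R$ is separable), represent multiplication by $x_0$ on $\F$ as a $2\times2$ matrix over $\R$, and represent $\bj_\ell$ (which acts $\gal{\phantom{x}}$-semilinearly, $\bj_\ell x=\gal x\bj_\ell$, with $\bj_\ell^2=1$) by the $\R$-linear matrix of $\sigma$ on $\F$. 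This realises $\K_\ell$ as a subalgebra of $\R^{2\times2}$ of $\R$-dimension $4 = \dim_\R\R^{2\times2}$, hence equal to it; alternatively, invoke \ref{rems:compositionalgebra} (the norm form of $\K_\ell$ is the Pfister form $N_{\F|\R}\perp(-N_{\F|\R})$, which is hyperbolic when $\delta_\ell=1$ is a norm, so the composition algebra $\K_\ell$ is the split quaternion algebra $\R^{2\times2}$). Finally, identify the standard involution: $\kappa$ is the unique $\R$-linear anti-automorphism inducing $-\id$ on $\R^\perp$ (by \ref{rems:compositionalgebra}, since $f_{\det}$ is non-degenerate here), and on $\R^{2\times2}$ the map $\bigl(\begin{smallmatrix} a & b\\ c & d\end{smallmatrix}\bigr)\mapsto\bigl(\begin{smallmatrix} d & -b\\ -c & a\end{smallmatrix}\bigr)$ is an anti-automorphism with $X+\kappa(X) = (\trace X)\,I\in\R$ and $\kappa(X)X=(\det X)I$, so it agrees with $\kappa$; one checks it is $-\id$ on the trace-zero matrices.

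The main obstacle is the bookkeeping in case~\textbf{a}: writing down a semilinearity-compatible $\R$-basis of $\K_\ell$ and verifying that the candidate map respects both multiplication (mixing the $\F$-structure with the twist $\bj_\ell x=\gal x\bj_\ell$) and the involution. This is purely a matter of organising a $4$-dimensional matrix computation, and the dimension count plus the uniqueness characterisation of $\kappa$ from \ref{rems:compositionalgebra} keep it from becoming unwieldy. Cases~\textbf{b} and~\textbf{c} are then immediate corollaries of the commutative-algebra facts about $\F[X]/(X^2\mp1)$.
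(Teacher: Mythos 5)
Your proposal is correct. The paper's own proof is essentially a citation: for cases \textbf{a} and \textbf{b} it appeals to the classification of split composition algebras in Springer--Veldkamp [1.8] (which is exactly your ``alternatively, invoke the hyperbolic Pfister norm form'' route), and it checks case \textbf{c} directly. What you add in case \textbf{a} is a self-contained construction: realising $\K_\ell$ with $\delta_\ell=1$ as the cyclic algebra $(\F/\R,\sigma,1)$ acting on $\F\cong\R^2$, with $x_0\in\F$ acting by multiplication and $\bj_\ell$ acting as $\sigma$. This is a genuine homomorphism because the defining relations $\bj_\ell^2=1$ and $\bj_\ell x=\gal{x}\,\bj_\ell$ are visibly satisfied ($\sigma\circ m_x=m_{\gal{x}}\circ\sigma$, $\sigma^2=\id$), and it is injective since a nonzero $\F$-linear map can never equal a nonzero $\F$-$\sigma$-semilinear one, so the dimension count $4=\dim_\R\R^{2\times 2}$ forces surjectivity. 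That buys independence from the external reference at the cost of a small matrix computation; the paper's citation buys brevity. Your identification of $\kappa$ with the adjugate via the uniqueness characterisation from~\ref{rems:compositionalgebra} (or via $\kappa(X)X=(\det X)\,\id$) is sound, and your explicit idempotent computation in case \textbf{b} and the nilpotent $z=1+\bj_\ell$ in case \textbf{c} match what the paper leaves to the reader.
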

\begin{proof}
  See~\cite[1.8]{MR1763974} for the first two cases. The last case is
  checked directly.
\end{proof}

\begin{lemm}\label{splitCaseSplitModule}
  Let\/ $W\coloneqq \Ext[\ell]{V}$, and assume that $\K_\ell$ is
  split.

  \begin{enumerate}
  \item If\/ $p\in\K_\ell\smallsetminus\{0,1\}$ is an idempotent then
    $\gal{p}=1-p$ and\/~$W$ decomposes as the
    direct sum $\Ext[\ell]{V} = Wp \oplus W\gal{p}$ of\/
    $\SUV{V,h}$-modules~$Wp$ and\/ $W\gal{p}=W(1-p)$.
  \item\label{conjIdempotents} If $\sigma\ne\id$ then
    $\K_\ell\cong\R^{2\times2}$ contains exactly one conjugacy class
    of idempotents apart from~$0$ and\/~$1$; %
      in~$\K_\ell$ that class is represented by $\left(
      \begin{smallmatrix}
        u & \gal{u}\\
        u & \gal{u}
      \end{smallmatrix}\right)$ with $1=\gal{u}+u$. \\%
    In particular, the $\SUV{V,h}$-modules $Wp$ and\/ $W\gal{p}$ are
    isomorphic.
  \item\label{twoIdempotents} If $\sigma=\id$ and $\Char\F\ne2$ then
    $\K_\ell\cong\F\times\F = \R\times\R $ contains precisely two
    idempotents apart from~$0$ and\/~$1$, corresponding to $(1,0)$ and
    $(0,1)$, respectively.  No element of $\K_\ell\smallsetminus\{0\}$ is
    nilpotent, and the two non-trivial idempotents of~$\K_\ell$ are
    $\frac12(1+\bj_\ell)$ and $\frac12(1-\bj_\ell)$.

    If\/ $\ell$ is odd then the involution $\alpha$ coincides with the
    standard involution~$\kappa$ and interchanges the two
    idempotents. If\/ $\ell$ is even then $\alpha=\id$. 
    
    For any $\K_\ell$-basis $B$ for~$W$ we define
    $\psi_B\colon \sum\limits_{b\in B} b\,x_b \mapsto
    \sum\limits_{b\in B}b\,\gal{x_b}$. %
    Then~$\psi_B$ is a $\K_\ell$-semilinear bijection of\/~$W$ (with
    companion automorphism~$\kappa$) which gives an isomorphism of\/
    $\SUV{V,h}$-modules from $Wp$ onto
    $\gal{Wp} \coloneqq \psi_B(Wp) = W\gal{p}$. %
  \item\label{nilpQuotient} %
    If $\sigma=\id$ and $\Char\F=2$ then $\K_\ell \cong \F[X]/(X^2)$
    does not contain any idempotents apart from~$0$ and\/~$1$. %
    The maximal ideal in~$\K_\ell$ is generated by the nilpotent
    element $z = 1+\bj_\ell$. The submodule $Wz$ and the quotient
    $W/Wz$ are isomorphic via $\rho_z\colon w+Wz\mapsto wz$.
  \end{enumerate}
\end{lemm}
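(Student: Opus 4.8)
The plan is to establish the general statement~\textbf{(a)} first, and then to obtain \textbf{(b)}, \textbf{(c)} and \textbf{(d)} by substituting the explicit description of~$\K_\ell$ provided by~\ref{splitCases} into each of the three cases and reading off idempotents and nilpotents. For~\textbf{(a)}, I would start from the reduced trace and norm: every $x\in\K_\ell$ satisfies $x^2-(x+\kappa(x))\,x+\kappa(x)x=0$, where $x+\kappa(x)$ and $\kappa(x)x=\det_\F(x)$ lie in~$\R$. Applied to an idempotent $p$ and combined with $p^2=p$ this gives $\bigl(1-(p+\kappa(p))\bigr)p=-\kappa(p)p\in\R$; if the coefficient $1-(p+\kappa(p))$ were a unit of~$\R$ we would get $p\in\R$, hence $p\in\{0,1\}$, so for $p\in\K_\ell\smallsetminus\{0,1\}$ we must have $p+\kappa(p)=1$, i.e.\ $\kappa(p)=1-p=\gal{p}$. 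Then $p$ and $\gal{p}=1-p$ are orthogonal idempotents summing to~$1$, so $W=Wp\oplus W\gal{p}$ as right $\K_\ell$-modules, and since $\SUV{V,h}$ acts $\K_\ell$-linearly by~\ref{def:Kmodule} it commutes with right multiplication by $p$ and by $\gal{p}$, hence stabilises both summands. (As a by-product this shows that in the situation of~\textbf{(d)}, where $\kappa=\id$ forces $p+\kappa(p)=2p=0\neq1$, there is no nontrivial idempotent at all.)

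For the ``clean'' cases~\textbf{(b)} and~\textbf{(d)}: in~\textbf{(b)}, \ref{splitCases}\,(a) gives $\K_\ell\cong\R^{2\times2}$, whose idempotents other than $0$ and the identity are exactly the rank-one ones; picking a basis adapted to $\R^2=\im(e)\oplus\ker(e)$ shows any two of them are conjugate under $\GL[2]{\R}=\K_\ell^\myTimes$, so there is a single such class, and a direct computation shows $\left(\begin{smallmatrix}u&\gal{u}\\u&\gal{u}\end{smallmatrix}\right)$ with $u+\gal{u}=1$ (in the matrix model of~\ref{def:algebraK}, with $\delta_\ell$ normalised to~$1$) is a nonzero non-identity idempotent, hence represents it. Choosing $a\in\K_\ell^\myTimes$ with $\gal{p}=a^{-1}pa$, the map $w\mapsto w\,a$ is an $\R$-linear bijection of~$W$ that commutes with the left $\K_\ell$-linear $\SUV{V,h}$-action and carries $Wp$ onto $Wpa=W(a\gal{p})=(Wa)\gal{p}=W\gal{p}$, which is the asserted isomorphism of $\SUV{V,h}$-modules. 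In~\textbf{(d)}, \ref{splitCases}\,(c) gives $\K_\ell\cong\F[X]/(X^2)$, a local ring with maximal ideal $(X)$ and no idempotents besides $0,1$; here $z=1+\bj_\ell$ satisfies $z^2=1+\bj_\ell^2=1+\delta_\ell=1+1=0$ and $\K_\ell=\F\oplus\F z$ with $\K_\ell/\F z\cong\F$, so $\F z$ is the maximal ideal and $z$ generates it. Since $\bj_\ell$, and hence $z$, is central (because $\sigma=\id$), $Wz$ is a $\K_\ell$-submodule, the map $\rho_z\colon w+Wz\mapsto wz$ is well defined, $\R$-linear, $\SUV{V,h}$-equivariant (as $z$ is central and the action is $\K_\ell$-linear), and obviously surjective onto $Wz$; its injectivity follows because, writing $W$ as a free $\K_\ell$-module, $wz=0$ forces each coordinate of~$w$ into $\mathrm{Ann}_{\K_\ell}(z)=(z)$, so $w\in Wz$. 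Hence $\rho_z$ is the isomorphism $W/Wz\cong Wz$.

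For~\textbf{(c)}, \ref{splitCases}\,(b) together with $\R=\F$ gives $\K_\ell\cong\F\times\F=\R\times\R$; a finite product of fields is reduced, so it has no nonzero nilpotents and exactly the idempotents $(0,0),(1,0),(0,1),(1,1)$, two of them nontrivial, and since $\bj_\ell^2=\delta_\ell=1$ one computes $\bigl(\tfrac12(1\pm\bj_\ell)\bigr)^2=\tfrac12(1\pm\bj_\ell)$, identifying these as the two nontrivial idempotents. The relation between $\alpha$ and $\kappa$ is precisely~\ref{alphaVSkappa}: for $\ell$ odd one has $\alpha=\kappa$, and since $\kappa(x_0+\bj_\ell x_1)=\gal{x_0}-\bj_\ell x_1=x_0-\bj_\ell x_1$ fixes~$1$ and negates~$\bj_\ell$, it interchanges $\tfrac12(1+\bj_\ell)$ with $\tfrac12(1-\bj_\ell)$; for $\ell$ even one has $\alpha=\id$. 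Finally, $\psi_B$ is $\K_\ell$-semilinear with companion~$\kappa$: for $c\in\K_\ell$ one has $\psi_B\bigl(\sum_b b\,x_b c\bigr)=\sum_b b\,\kappa(x_b c)=\sum_b b\,\kappa(c)\kappa(x_b)=\bigl(\sum_b b\,\kappa(x_b)\bigr)\kappa(c)$ by commutativity of~$\K_\ell$, and $\psi_B^2=\id$ since $\kappa^2=\id$; from $\psi_B(wp)=\psi_B(w)\kappa(p)=\psi_B(w)\gal{p}$ one reads off $\psi_B(Wp)=W\gal{p}$, and, using that the $\SUV{V,h}$-action is $\K_\ell$-linear, that $\psi_B$ intertwines it, so that it yields the claimed isomorphism of $\SUV{V,h}$-modules from $Wp$ onto $W\gal{p}$.

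The step I expect to require the most care is this last one in~\textbf{(c)}: showing that the merely $\kappa$-\emph{semi}linear map $\psi_B$ nevertheless intertwines the $\K_\ell$-\emph{linear} $\SUV{V,h}$-action well enough to be an isomorphism of $\SUV{V,h}$-modules. Everything else reduces to bookkeeping with~\ref{splitCases}, \ref{alphaVSkappa} and the orthogonal-idempotent and nilpotent arithmetic inside~$\K_\ell$.
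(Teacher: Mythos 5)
Most of your argument is sound and runs close to the paper's own proof: your derivation of $\gal{p}=1-p$ in (a) via the trace--norm identity is equivalent to the paper's direct manipulation of $p(1-p)=0$ and $\det_\F p=\gal{p}p=0$; part (b) is the paper's argument verbatim (conjugacy of rank-one idempotents, then $w\mapsto wa$); and in (d) your injectivity argument for $\rho_z$ via freeness of $W$ and $\mathrm{Ann}_{\K_\ell}(z)=z\K_\ell$ is cleaner than the paper's rank estimate for $\id+J$.

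The genuine gap is precisely the step you flagged in (c). The claim that $\psi_B$ intertwines the $\SUV{V,h}$-action ``because the action is $\K_\ell$-linear'' is a non sequitur, and false. Writing $\gamma(b)=\sum_{b'}b'\,c_{b'b}$ with $c_{b'b}\in\K_\ell$, one gets $\psi_B(\gamma(w))=\sum_{b'}b'\sum_b\kappa(c_{b'b})\,\kappa(x_b)$ but $\gamma(\psi_B(w))=\sum_{b'}b'\sum_b c_{b'b}\,\kappa(x_b)$; these agree only when every $c_{b'b}$ is fixed by $\kappa$, i.e.\ only when $\gamma$ preserves the $\F$-span of $B$, which fails in general. (Conjugating a $\K_\ell$-linear operator by the $\kappa$-semilinear $\psi_B$ yields the $\kappa$-twisted operator, not the operator itself.) In fact $Wp$ and $W\gal{p}$ need \emph{not} be isomorphic as honest $\SUV{V,h}$-modules: in Example~\ref{RR4} they are the self-dual and anti-self-dual parts of $\Ext[2]{\RR^4}$, on which the two factors of $\SOV{\RR^4,h}/\langle-\id\rangle\cong\SOV{\RR^3,h_0}\times\SOV{\RR^3,h_0}$ act with different kernels. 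What your computation $\psi_B(wp)=\psi_B(w)\,\gal{p}$ actually establishes --- and what the notation $\gal{Wp}$ is meant to record --- is that $\psi_B$ is a $\kappa$-\emph{semilinear} isomorphism of $\SUV{V,h}$-modules, so that $W\gal{p}$ is the $\kappa$-conjugate (``quasi-equivalent'') module of $Wp$. State the conclusion in that form; the assertion of genuine equivariance should be withdrawn.
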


\begin{proof}
  Assume that $p\in\K_\ell\smallsetminus\{0,1\}$ is an idempotent. Then
  $p^2=p$ implies $0=p^2-p=p(1-p)$, and $0 = \det_\F p = \gal{p}p$
  follows. Then $(p+\gal{p})^2= p^2+2\gal{p}p+\gal{p}^2 = p+\gal{p}$
  yields that $p+\gal{p}$ is an idempotent in the field~$\F$. Since
  $p+\gal{p}=0$ would imply $p=p^2=-\gal{p}p=0$ we infer $p+\gal{p}=1$
  and $\gal{p}=1-p$. Thus $W=Wp + W\gal{p}$, and the sum is direct
  because $vp=w\gal{p}$ implies $w=(v+w)p\in W p$ and
  $w\gal{p}\in Wp\gal{p}=\{0\}$. The summands are $\SUV{V,h}$-modules
  because $\K_\ell$ centralizes $\SUV{V,h}$.

  Let $p,q\in\K_\ell\smallsetminus\{0,1\}$ be idempotents, and let
  $\varphi\colon\R^{2\times2}\to\K_\ell$ be an isomorphism of
  algebras. Then both $p$ and $q$ are conjugates of~$\varphi\left(
    \begin{smallmatrix}
      1 & 0 \\
      0 & 0 
    \end{smallmatrix}\right)$; in particular, there exists
  $a\in\K_\ell^\myTimes$ with $q=a^{-1}pa$. Now the map $w\mapsto wa$ is
  $\SUV{V,h}$-equivariant and induces a module isomorphism from
  $Wp=Wa^{-1}p$ onto $Wpa=Wa^{-1}pa=Wq$. %
  Thus assertion~\ref{conjIdempotents} is established.

  Now assume $\sigma=\id$ but $\Char\F\ne2$. The idempotents in
  $\F\times\F$ are easy to find and it is clear that there are no
  nilpotent elements apart from~$0$. %
  As $W$ is a \emph{free} $\K_\ell$-module (see~\ref{def:Kmodule}) it
  is clear that~$B$ and~$\psi_B$ exist with the required properties.
  
  Finally, we assume $\sigma=\id$ but $\Char\F=2$. The assertion about
  idempotents in $\F[X]/(X^2)$ follows since
  $a^2+\F X^2 \ni a^2+b^2X^2 = (a+bX)^2 \in (a+bX)+\F X^2$ implies
  $b=0$ and $a^2=a\in\F$.

  For the last assertion, it suffices to show that the kernel
  of~$\rho_z$ is precisely $Wz$. The inclusion
  $Wz\subseteq\ker{\rho_z}$ is clear from~$z^2=0$. In order to see the
  reverse inclusion, pick an orthogonal basis $v_1,\dots,v_{2\ell}$
  for~$V$. Let $W_1$ be the subspace of~$W$ spanned by those
  $\ell$-vectors $v_{t_1}\wedge\dots\wedge v_{t_\ell}$ with
  $1\in\{t_1,\dots,t_\ell\}$ and let $W^1$ denote the span of those
  with $1\notin\{t_1,\dots,t_\ell\}$.  Then $W=W_1\oplus W^1$ and $J$
  (i.e., right multiplication by~$\bj_\ell$) induces an isomorphism
  from $W_1$ onto~$W^1$.  Right multiplication with $z$ induces the
  operator $\id+J$ which has rank at least~$\frac12\dim{W}$ because
  its restriction to~$W_1$ is injective. Thus
  $\dim\ker{\rho_z}\le\frac12\dim{W}\le\dim{Wz}\le\dim\ker{\rho_z}$
  yields $\ker{\rho_z}=Wz$ as required.
\end{proof}

\begin{rema}
  In $\R^{2\times2}$, the nilpotent elements apart from~$0$ form a
  single conjugacy class, represented by~$\left(
    \begin{smallmatrix}
      0 & 1 \\
      0 & 0
    \end{smallmatrix}\right)$. %
  The conjugacy class of idempotents (different from~$1$ and~$0$) is
  represented by~$\left(
    \begin{smallmatrix}
      1 & 0 \\
      0 & 0
    \end{smallmatrix}\right)$. %
  If $\sigma\ne\id$ and~$\K_\ell$ is split, the corresponding classes
  in~$\K_\ell$ are represented by any $\left(
    \begin{smallmatrix}
      v & \gal{v} \\
      v & \gal{v} 
    \end{smallmatrix}\right)$ %
  with $v\in\K_\ell\smallsetminus\{0\}$ such that %
  $v+\gal{v}=0$ (for a nilpotent element) or %
  $v+\gal{v}=1$ (for an idempotent).
\end{rema}

\begin{lemm}\label{orthWp}
  Assume $\sigma=\id$, $\Char\F\ne2$ and that $\K_\ell$ splits so that
  there exist idempotents~$p$ and $\gal{p} = 1-p$ in
  $\K_\ell\cong\F\times\F$.
  \begin{enumerate}
  \item In any case the restriction of\/ $g$ to the subspace $Wp$ is a
    multiple of\/ $\Ext[\ell]{h}$: \\
    we have $g(Xp,Yp) = \Ext[\ell]{h}(Xp,Yp)\,2p$ for all $X,Y\in W$.
  \item If\/ $\ell$ is even then $Wp$ and $W(1-p)$ are orthogonal with
    respect to~$g$,  %
    and the restrictions of\/~$g$ to~$Wp$ and~$W(1-p)$, respectively,
    are not degenerate.
  \item\label{trivialRestriction}%
    If\/ $\ell$ is odd then the restrictions of\/~$g$ and
    of\/~$\Ext[\ell]{h}$ to~$Wp$ are trivial.
  \end{enumerate}
\end{lemm}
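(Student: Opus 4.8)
The plan is to do all computations inside the commutative ring $\K_\ell\cong\F\times\F$ (commutative because $\sigma=\id$ makes $\bj_\ell$ centralize~$\F$) and to use two eigenspace facts about~$J$. Throughout this section $\delta_\ell=1$, so that $J^2=\id$ by~\ref{squareHodge}, and by~\ref{splitCaseSplitModule}\ref{twoIdempotents} we may assume $p=\tfrac12(1+\bj_\ell)$, hence $1-p=\tfrac12(1-\bj_\ell)$ (the other nontrivial idempotent is treated symmetrically). I would first record that $\bj_\ell\,p=p$ and $\bj_\ell(1-p)=-(1-p)$, and that, since $Xp=\tfrac12\bigl(X+J(X)\bigr)$ while $J^2=\id$, every vector of $Wp$ is fixed by~$J$, whereas every vector of $W(1-p)=\bigl\{\tfrac12(X-J(X))\mid X\in W\bigr\}$ is negated by~$J$.

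For~(a), I would first observe that $\pf$ and $\Ext[\ell]{h}$ agree on~$Wp$: using $J(Xp)=Xp$ and~\ref{HodgeSemiSimilitude}\ref{firstAss},
\[
  \pf(Xp,Yp)=\pf\bigl(J(Xp),Yp\bigr)=\Ext[\ell]{h}(Xp,Yp).
\]
Inserting this into $g(u,v)=\Ext[\ell]{h}(u,v)+(-1)^\ell\,\bj_\ell\,\pf(u,v)$ from~\ref{def:g} yields $g(Xp,Yp)=\Ext[\ell]{h}(Xp,Yp)\,\bigl(1+(-1)^\ell\bj_\ell\bigr)$. When $\ell$ is even this factor equals $1+\bj_\ell=2p$, which is exactly the asserted identity. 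When $\ell$ is odd, applying~\ref{HodgeSemiSimilitude}\ref{skewH} (where $(-1)^{(n-\ell)\ell}=(-1)^\ell=-1$ and $\delta_\ell=1$) to the $J$-fixed pair $Xp,Yp$ gives $\Ext[\ell]{h}(Xp,Yp)=-\Ext[\ell]{h}(Xp,Yp)$, hence $\Ext[\ell]{h}(Xp,Yp)=0$ since $\Char\F\ne2$, so both sides of~(a) vanish. This argument simultaneously settles~(c): for $\ell$ odd both $\Ext[\ell]{h}$ and $g$ restrict trivially to~$Wp$.

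For~(b), $\ell$ is even, so $\alpha=\id$ on~$\K_\ell$ by~\ref{alphaVSkappa}. Since $g$ is $\alpha$-hermitian and $\K_\ell$-linear in the right argument we have $g(ua,vb)=\alpha(a)\,g(u,v)\,b$ for all $a,b\in\K_\ell$; so, using commutativity and $p(1-p)=0$,
\[
  g\bigl(Xp,Y(1-p)\bigr)=\alpha(p)\,g(X,Y)\,(1-p)=g(X,Y)\,p(1-p)=0,
\]
which is the orthogonality $Wp\perp W(1-p)$. Non-degeneracy of each restriction then follows from the non-degeneracy of~$g$ on the whole space $W=Wp\oplus W(1-p)$ (see~\ref{splitCaseSplitModule} and~\ref{isotropic}): if $x\in Wp$ has $g(x,Wp)=0$, then $g\bigl(x,W(1-p)\bigr)=0$ as well by orthogonality, so $g(x,W)=0$ and $x=0$; symmetrically for $W(1-p)$.

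I do not anticipate a real obstacle: once $J(Xp)=Xp$ and $\bj_\ell p=p$ are in place, the rest is bookkeeping with the identities of~\ref{HodgeSemiSimilitude} and the multiplication in~$\K_\ell$. The only point deserving a sentence of care is what ``non-degenerate'' means for the $\K_\ell$-valued form~$g$ and why it passes to orthogonal direct summands; since it simply means that the radical of~$g$ is trivial, the argument above applies verbatim, and by~(a) the restriction $g|_{Wp}$ in fact takes values in $\F p\cong\F$ (when $\ell$ is even), so it is an ordinary non-degenerate symmetric $\F$-bilinear form.
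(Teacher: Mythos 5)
Your proposal is correct and follows essentially the same route as the paper: reduce to $p=\tfrac12(1+\bj_\ell)$, exploit $p\bj_\ell=p$ (equivalently $J(Xp)=Xp$), and use the $\alpha$-hermitian property $g(Xp,Y q)=\alpha(p)g(X,Y)q$ for parts (b) and (c). The only cosmetic difference is in part (a), where the paper obtains $g(Xp,Yp)=\Ext[\ell]{h}(Xp,Yp)\,2p$ uniformly in $\ell$ by substituting $Yp\bj_\ell=Yp$ directly into the defining formula for $g$, whereas you detour through the identities of~\ref{HodgeSemiSimilitude} and must treat odd $\ell$ separately by showing both sides vanish -- which you do correctly.
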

\begin{proof}
  According to~\ref{splitCaseSplitModule}.\ref{twoIdempotents}, we may
  (up to an application of the standard involution) assume
  $p=\frac12(1+\bj_\ell)$.  We compute %
  $p\bj_\ell = \frac1{2}(1+\bj_\ell)\bj_\ell =
  \frac1{2}(\bj^{}_\ell+\bj_\ell^2) = \frac{1}{2}(\bj_\ell^{}+1) =
  p$. This yields $g(Xp,Yp) =
  \Ext[\ell]{h}(Xp,Yp)+\Ext[\ell]{h}(Xp,Yp\bj_\ell^{})\bj_\ell^{-1} =
  \Ext[\ell]{h}(Xp,Yp)(1+\bj_\ell^{-1}) = \Ext[\ell]{h}(Xp,Yp)\,2p$,
  as claimed.
  
  If\/ $\ell$ is even then $\alpha=\id$ and we compute $g(Xp,Y(1-p)) =
  p\,g(X,Y)\,(1-p) = g(X,Y)\,({p-p^2}) = 0$ for arbitrary $X,Y\in
  W$. Thus $Wp\perp W(1-p)$, as claimed. %
  Using~\ref{gAnisotropicIfhIs}, it is easy to see that both the
  restrictions of~$g$  to~$Wp$ and to~$W(1-p)$ are not degenerate. %
  
  If $\ell$ is odd then $\alpha$ interchanges $p$ with $1-p$ and
  $g(Xp,Yp) = \alpha(p)g(X,Y)p = g(X,Y)\,(1-p)p = 0$ holds for all
  $X,Y\in W$. Thus the restriction of $g$ to~$Wp$ is trivial, and so
  is the restriction of~$\Ext[\ell]{h}$. 
\end{proof}

\begin{lemm}\label{totSingWz}
    Assume $\sigma=\id$, $\Char\F=2$ and that $\K_\ell$ splits so that
    there exists a nilpotent element $z\in\K_\ell\smallsetminus\{0\}$. 
    Then the restriction of~$g$ to the subspace~$Wz$ is trivial.
\end{lemm}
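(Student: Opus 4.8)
The plan is to exploit the $\alpha$-hermitian structure of~$g$ together with the nilpotency $z^2=0$. First I would record the special features of the hypotheses. The normalization fixed in the convention preceding~\ref{splitCases} gives $\delta_\ell=1$, and the nilpotent element is $z=1+\bj_\ell^{}$ with $z^2=2z=0$. Since $\sigma=\id$ we have $\gal{x}=x$ for $x\in\F$, so the relation $\bj_\ell^{}\,x=\gal{x}\,\bj_\ell^{}$ becomes $\bj_\ell^{}\,x=x\,\bj_\ell^{}$ and hence $\K_\ell$ is commutative; moreover $\alpha(z)=\alpha(1)+\alpha(\bj_\ell^{})=1+(-1)^\ell\,\bj_\ell^{}=1+\bj_\ell^{}=z$ because $\Char\F=2$ (see~\ref{def:g}; this is also immediate from~\ref{splitCases} and~\ref{alphaVSkappa}, which give $\alpha=\kappa=\id$).

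Next I would rewrite~$g$ on $Wz\times Wz$, where $W\coloneqq\Ext[\ell]{V}$. Recall from~\ref{def:g} that $g$ is $\K_\ell$-linear in the right argument and satisfies $g(v,u)=\alpha(g(u,v))$; since~$\alpha$ is an anti-automorphism, combining these yields $g(Xa,Y)=\alpha(a)\,g(X,Y)$ for all $X,Y\in W$ and $a\in\K_\ell$. Therefore, for arbitrary $X,Y\in W$,
\[
  g(Xz,Yz)=\alpha(z)\,g(X,Yz)=\alpha(z)\,g(X,Y)\,z=z\,g(X,Y)\,z,
\]
and, since $\K_\ell$ is commutative with $z^2=0$, the right-hand side equals $g(X,Y)\,z^2=0$.

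Finally, right multiplication by~$z$ is additive, so every element of the submodule~$Wz$ (cf.~\ref{splitCaseSplitModule}) has the form $wz$ with $w\in W$. The displayed identity thus shows that $g$ vanishes identically on $Wz\times Wz$; that is, the restriction of~$g$ to~$Wz$ is trivial, as claimed.

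I do not expect a real obstacle here: the only point requiring attention is the bookkeeping of the involutions $\sigma$, $\kappa$ and $\alpha$ under the present hypotheses, together with the resulting collapse of $\alpha(z)\,g(X,Y)\,z$ to the multiple $g(X,Y)\,z^2$ of $z^2=0$. (One can even bypass commutativity by observing that $(c_0+\bj_\ell^{}\,c_1)z=(c_0+c_1)z$ in~$\K_\ell$, so that $\K_\ell z=\F z$ is one-dimensional and hence $z\,\K_\ell\,z\subseteq\F z^2=0$.)
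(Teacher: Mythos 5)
Your proof is correct and follows the same route as the paper's: note that $\alpha=\id$ under these hypotheses, pull $z$ out of both arguments of~$g$ by sesquilinearity, and conclude from $z^2=0$. The paper's proof is just the one-line version of your computation; your extra care with commutativity of~$\K_\ell$ and the identification $\alpha=\kappa=\id$ is accurate bookkeeping, not a different argument.
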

\begin{proof}
  We note $\alpha=\id$ and then compute $g(Xz,Yz) = g(X,Y)\,z^2 = 0$.
\end{proof}

\begin{ndef}[Reduction of the form]\label{restrictForm}
  Assume that $\K_\ell$ is split (with $\bj_\ell^2=1$), and consider
  $z=1+\bj_\ell$.  The value of $\alpha(z)$ depends on~$\ell$; we find
  $\alpha(z)=z$ if~$\ell$ is even and $\alpha(z)=1-\bj_\ell$ if~$\ell$
  is odd. In the latter case, we thus have
  $\alpha(z)z=0$.

  In order to understand the restriction of the form~$g$ to the
  submodule~$Wz$ we introduce some notation, as follows. %
    If~$\ell$ is odd, $\sigma\ne\id$, and $\Char\F\ne2$ we pick
    $a_\ell\in\F\smallsetminus\{0\}$ such that $\gal{a_\ell}=
    -a_\ell$. %
    In all other cases, we take $a_\ell\coloneqq 1$. %
    Using the $\R$-linear map
  \[
  r_\ell \colon \K_\ell \to \R \colon x+\bj_\ell y \mapsto
  \left(x+(-1)^\ell y +(-1)^\ell\left(\gal{x+(-1)^\ell y}\right)
  \right) a_\ell 
  \]
  we then find
  \[
  \begin{array}{rcl}
    \alpha(z)(x+\bj_\ell\,y)z &=& \left(1+(-1)^\ell \bj_\ell\right)
    \left(x+\bj_\ell\,y\right) \left(1+\bj_\ell\right) \\
    &=&
    \left((x+(-1)^\ell y)+(-1)^\ell\gal{(x+(-1)^\ell y)}\right)
    \left(1+\bj_\ell\right) \\
    &=&
    r_\ell(x+\bj_\ell\,y)\,\, a_\ell^{-1}z \,.
  \end{array}
  \]
  Putting
  \[
  g^o(Xz,Yz)  \coloneqq  r_\ell^{}\bigl(g(X,Y)\bigr)
  \]
  we obtain a $(-1)^\ell$-symmetric $\R$-bilinear form $g^o$ on~$Wz$.
  Thus there are homomorphisms %
  \[
  \eta^o_{2k}\colon \SUV{V,h}\to\OV{Wz,g^o}
  \quad\text{ or }\quad
  \eta^o_{2k+1}\colon \SUV{V,h}\to\SpV{Wz,g^o} \,,
  \]
  respectively, according to the parity of~$\ell\in\{2k,2k+1\}$. 
\end{ndef}

Note that the $\R$-bilinear form~$g^o$ on~$Wz$ is zero if $\sigma=\id$
and either $\ell$ is odd or $\Char{F}=2$. %
This fits well with the fact
(see~\ref{orthWp}.\ref{trivialRestriction} and~\ref{totSingWz},
cp. also~\ref{HodgeSemiSimilitude}.\ref{skewH}) that even the
restriction of the $\K_\ell$-sesquilinear form~$g$ to~$Wz$ is trivial
in those cases.
  
\begin{lemm}\label{kerEtaO}
  If the $\SUV{V,h}$-module $Wz$ has a complement in $W$ (in
  particular, if $\sigma\ne\id$ or $\Char\F\ne2$,
  see~\ref{splitCaseSplitModule}.\ref{conjIdempotents},
  \ref{splitCaseSplitModule}.\ref{twoIdempotents}) then that
  complement is isomorphic to~$Wz$ and
  $\ker{\eta^o_\ell}=\ker{\eta^{}_\ell}$.

  If there is no complementary $\SUV{V,h}$-module to $Wz$ 
  then
  $\sigma=\id$ and $\Char\F=2$ and $W/Wz$ is isomorphic to $Wz$,
  see~\ref{splitCaseSplitModule}.\ref{nilpQuotient}. In this case, the group
  $\ker{\eta^o_\ell}/\ker{\eta^{}_\ell}$ is an elementary abelian
  $2$-group because it is isomorphic to a subgroup of $\Hom{Wz}{Wz}$.
\qed
\end{lemm}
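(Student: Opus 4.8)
The statement has two disjoint parts, governed by whether the submodule $Wz$ admits an $\SUV{V,h}$-invariant complement in $W = \Ext[\ell]V$.

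For the first part, suppose such a complement $C$ exists. I would argue as follows. When $\sigma\ne\id$ or $\Char\F\ne2$, Lemma~\ref{splitCaseSplitModule} tells us that $Wz = Wp$ for a non-trivial idempotent $p$, with complement $W\gal p = W(1-p)$; in the cases \ref{splitCaseSplitModule}.\ref{conjIdempotents} and~\ref{splitCaseSplitModule}.\ref{twoIdempotents} that complement is shown there to be $\SUV{V,h}$-isomorphic to $Wp = Wz$ (via right multiplication by a suitable unit, resp. via $\psi_B$), so in general any complement $C$ is isomorphic to $Wz$ by abstract nonsense once one such isomorphic complement is known. To get $\ker{\eta^o_\ell}=\ker{\eta^{}_\ell}$, note one inclusion is automatic: an element acting trivially on all of $W$ certainly preserves $g^o$ on $Wz$, so $\ker{\eta^{}_\ell}\subseteq\ker{\eta^o_\ell}$. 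For the reverse, take $\gamma\in\ker{\eta^o_\ell}$; then by Lemma~\ref{kernelEta}, $\gamma$ lies in $\ker{\eta^{}_\ell}$ as soon as $\gamma$ acts trivially on $W$ itself. Here I would use that $W = Wz\oplus C$ with $C\cong Wz$ as $\SUV{V,h}$-modules, together with the explicit formula $g^o(Xz,Yz)=r_\ell(g(X,Y))$ and the non-degeneracy of $g$ (Lemma~\ref{isotropic}): acting trivially on $g^o$ forces $\gamma$ to act trivially on $Wz$ up to the scalars allowed by the kernel description, and the module isomorphism $C\cong Wz$ propagates this to $C$, hence to all of $W$. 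So $\ker{\eta^o_\ell}=\ker{\eta^{}_\ell}$.

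For the second part, when no complement exists, Lemma~\ref{splitCaseSplitModule}.\ref{nilpQuotient} already forces $\sigma=\id$, $\Char\F=2$, and gives the isomorphism $\rho_z\colon W/Wz\to Wz$, $w+Wz\mapsto wz$, of $\SUV{V,h}$-modules. Now take $\gamma\in\ker{\eta^o_\ell}$. I would show that the "defect" of $\gamma$ from lying in $\ker{\eta^{}_\ell}$ is measured by an $\SUV{V,h}$-module homomorphism $W/Wz\to Wz$: since $\gamma$ acts trivially on $g^o$, and since (by the remark after \ref{restrictForm}) $g^o$ is actually the zero form here, "acting trivially on $g^o$" is no constraint, so $\ker{\eta^o_\ell}$ is the whole preimage in $\gUV{}$ of the stabilizer of the flag $0\subseteq Wz\subseteq W$ — more precisely, of those $\gamma$ acting trivially on $Wz$ and on $W/Wz$. (One uses that $\eta_\ell$ already lands in the unitary group of $g$, so $\gamma\in\ker\eta^o_\ell$ at least preserves $g$ on $W$; combined with $\alpha=\id$ and $z^2=0$ one checks $\gamma$ normalizes $Wz$.) Assigning to such a $\gamma$ the map $w+Wz\mapsto \gamma(w)-w\in Wz$ is a well-defined $\SUV{V,h}$-module homomorphism $W/Wz\to Wz$, additive in $\gamma$ because $\Char=2$ kills the cross term; its kernel is exactly $\ker{\eta^{}_\ell}$. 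Hence $\ker{\eta^o_\ell}/\ker{\eta^{}_\ell}$ embeds into $\Hom{Wz}{Wz}\cong\Hom{W/Wz}{Wz}$, which is an elementary abelian $2$-group since $\Char\F=2$.

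**Main obstacle.**
The delicate point is the first part: showing that preserving the possibly-degenerate (over $\R$) form $g^o$ on the single summand $Wz$ already forces trivial action on all of $W$. This is where one must carefully combine the non-degeneracy of the $\K_\ell$-valued form $g$ (Lemma~\ref{isotropic}), the explicit shape of $r_\ell$ and $\alpha(z)z$ from~\ref{restrictForm}, the module isomorphism between $Wz$ and its complement, and the kernel computation in Lemma~\ref{kernelEta}. The second part is more formal: once one observes that $g^o\equiv0$ in that case, the content is purely the unipotent-stabilizer argument and the $\Char 2$ additivity, so the $\Hom$-embedding drops out cleanly.
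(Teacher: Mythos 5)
The paper offers no written proof for this lemma, so the implicit argument is exactly the one your outline gestures at: once a complement of $Wz$ is known to be $\SUV{V,h}$-isomorphic to $Wz$ (via \ref{splitCaseSplitModule}), triviality of the action on $Wz$ propagates to all of $W$; and in the remaining case the map $\gamma\mapsto\bigl(w+Wz\mapsto \gamma(w)-w\bigr)$ embeds $\ker{\eta^o_\ell}/\ker{\eta^{}_\ell}$ into $\Hom{W/Wz}{Wz}\cong\Hom{Wz}{Wz}$. Your second part is essentially a correct write-up of this. Two small simplifications: $Wz$ is $\gamma$-invariant simply because the action is $\K_\ell$-linear, so no appeal to $g$, to $\alpha=\id$, or to $z^2=0$ is needed; and additivity of $\gamma\mapsto\gamma-\id$ holds because $\gamma-\id$ and $\gamma'-\id$ both map $W$ into $Wz$ and vanish on $Wz$, not because $\Char\F=2$ (the characteristic is only needed to make $\Hom{Wz}{Wz}$ elementary abelian of exponent~$2$).

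The genuine problem is in your first part, and it stems from conflating ``lies in $\ker{\eta^o_\ell}$'' with ``preserves $g^o$''. By \ref{restrictForm}, $\eta^o_\ell(\gamma)$ \emph{is} the restriction of $\Ext[\ell]\gamma$ to $Wz$, so $\ker{\eta^o_\ell}$ is by definition the set of $\gamma$ acting as the identity on the subspace $Wz$ --- a condition on the action, not on the form. Your step ``acting trivially on $g^o$ forces $\gamma$ to act trivially on $Wz$ up to the scalars allowed by the kernel description'' is therefore not a valid (nor a needed) step: every element of the image of $\eta^o_\ell$ preserves $g^o$, so form-preservation forces nothing, and the non-degeneracy of $g$, the shape of $r_\ell$, and the formula for $\alpha(z)z$ play no role. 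The ``main obstacle'' you single out is a phantom. What the first part actually requires is only (i) the trivial inclusion $\ker{\eta^{}_\ell}\subseteq\ker{\eta^o_\ell}$, and (ii) that if $\gamma$ fixes $Wz$ pointwise and $\phi\colon Wz\to C$ is an $\SUV{V,h}$-isomorphism onto a complement, then $\gamma$ fixes $C=\phi(Wz)$ pointwise, hence all of $W=Wz\oplus C$. Do note, finally, that everything hinges on the complement really being $\SUV{V,h}$-isomorphic to $Wz$; for $\sigma=\id$ and $\Char\F\ne2$ this rests on the claimed equivariance of $\psi_B$ in \ref{splitCaseSplitModule}.\ref{twoIdempotents}, which you should check against Example~\ref{RR4}, where the actions on $Wz$ and on $W\gal{z}$ are the two distinct projections of $\SOV{\RR^4,h}/\langle-\id\rangle$ and hence have different kernels.
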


\begin{rems}\label{rems:specialCases}
  The description in~\ref{restrictForm} is somewhat complicated
  because $\ell$ is arbitrary (and may be odd).  If one studies the
  Hodge operator in order to understand exceptional isomorphisms
  between classical groups then the values $\ell\in\{1,2\}$ give
  the most interesting examples.

  For $\ell=2$, we will see in~\ref{hQuaternionNorm},
  \ref{splitSigmaNotId}, and~\ref{splitSigmaNotIdCharTwo} below that
  the split cases lead to interesting and intimate connections with
  norm forms on composition algebras.

  If $\sigma=\id$ and $\Char\F=2$ then the precise structure of
  $\SUV{V,h} = \SOV{V,h}$ and $\ker{\eta^o_\ell}/\ker{\eta^{}_\ell}$
  depends on the defect of the form~$h$. Details will be given
  in~\cite{KramerStroppel-hodge-char2-arXiv}. %
  Note that a complementary module for $Wz$ may also exist in that
  case; for instance, this will happen if $\SOV{V,h}$ is trivial
  (cf.~\cite[2.4]{KramerStroppel-hodge-char2-arXiv}). %
\end{rems}

\begin{ndef}[Examples: Symmetric bilinear forms on $\noexpand\RR^4$]\label{RR4}
  Let $\F=\RR$ and $\sigma=\id$. We consider the symmetric bilinear
  forms of Witt index $0$ and~$2$, respectively, on $V=\RR^4$. In both
  cases, the form has discriminant~$1$. So~$\K_2$ splits, we have
  $\K_2\cong\RR\times\RR$, and~\ref{restrictForm} applies.

  If~$h$ has Witt index~$0$ then $\SOV{\RR^4,h}$ is the compact form
  of type~$\Type{D}{2} = \Type{A}1\times\Type{A}1$; denoted
  by~$\rType{D}{2}{\RR,0}$ in~\cite{MR0218489}. %
  The homomorphisms obtained by the actions on~$Wz$ and on~$W\gal{z}$
  are the projections onto the two direct factors in
  $\SOV{\RR^4,h}/\langle-\id\rangle
  \cong\SOV{\RR^3,h_0}\times\SOV{\RR^3,h_0}$, where $h_0$ is the
  (essentially unique) symmetric bilinear form of Witt index~$0$
  on~$\RR^3$.

  If~$h$ has Witt index~$2$ then the connected component
  $\EOV{\RR^4,h}$ of $\SOV{\RR^4,h}$ is the split real form of
  type~$\Type{D}{2}$; denoted by~$\rType{D}{2}{\RR,2}$
  in~\cite{MR0218489}. %
  (See Section~\ref{sec:nonSplit} below for a general definition of
  the groups $\EOV{V,h}$ and $\EUV{V,h}$.) %
  The homomorphisms now obtained by the actions on~$Wz$ and
  on~$W\gal{z}$ are the projections onto the two direct factors in
  $\SOV{\RR^4,h}/\langle-\id\rangle
  \cong\SOV{\RR^3,h_1}\times\SOV{\RR^3,h_1}$, where $h_1$ is the
  (essentially unique) symmetric bilinear form of Witt index~$1$
  on~$\RR^3$.

  We have $\OV{\RR^4,h} \cong \Cg2^2\ltimes\EOV{\RR^4,h}$, where
  $\Cg2$ is a cyclic group of order~$2$. %
  More explicitly, assume that $v_1,v_2,v_3,v_4$ is an orthogonal
  basis with $h(v_1,v_1)=1=h(v_2,v_2)$ and $h(v_3,v_3)=-1=h(v_4,v_4)$.

  Consider the matrices
  \[
   B \coloneqq \left(
     \begin{smallmatrix}
      -1 & 0 & 0 & 0 \\
       0 & 1 & 0 & 0 \\
       0 & 0 & 1 & 0 \\
       0 & 0 & 0 & 1 \\
     \end{smallmatrix}
   \right)\,,\quad
   C \coloneqq \left(
     \begin{smallmatrix}
       1 & 0 & 0 & 0 \\
       0 & 1 & 0 & 0 \\
       0 & 0 &-1 & 0 \\
       0 & 0 & 0 & 1 \\
     \end{smallmatrix}
   \right)
   \quad\text{ and }\quad
   T \coloneqq \left(
     \begin{smallmatrix}
       0 & 0 & 1 & 0 \\
       0 & 0 & 0 & 1 \\
       1 & 0 & 0 & 0 \\
       0 & 1 & 0 & 0 \\
     \end{smallmatrix}
   \right)\,.
 \]
 The complement for $\EOV{\RR^4,h}$ in $\OV{\RR^4,h}$ is generated by the
 involutions~$\beta$, $\gamma$ described by~$B$ and~$C$, respectively. %
 Both $\eta^o_2(\beta)$ and $\eta^o_2(\gamma)$ interchange $Wz$ with $W\gal{z}$;
 their product $\eta^o_2(\beta\gamma)$ generates a complement for $\EOV{Wz,g^o}
 \cong \PSL[2]\RR$ in $\SOV{Wz,g^o}\cong\PGL[2]\RR$. %
 The matrix~$T$ describes a similitude~$\tau$ with~$r_\tau=-1$,
 and~$\eta^o_2(\gamma)$ generates a complement for $\SOV{Wz,g^o}$ in $\OV{Wz,g^o}$.
\end{ndef}

\section{The smallest case}  

It may come as a surprise that the case $\ell=1$ (and $n=2$) is by no
means trivial, but leads to interesting isomorphisms of groups. First
of all, we note that $\Ext[1]{V}$ is naturally identified with~$V$
itself, see~\cite[II\,\S\,7.1]{MR979982}, and $\Ext[1]h$ coincides
with~$h$.

Choose an orthogonal basis $v_1,v_2$ with respect to~$h$ in~$V$,
abbreviate $c_1\coloneqq h(v_1,v_1)$ and $c_2\coloneqq h(v_2,v_2)$,
and define $b\colon\Ext[2]V\to\F$ by $b(v_1\wedge v_2)=1$.
Then~\ref{computeHodge} specializes to
$J(v_1)= v_2 \,\frac{-h(v_1,v_1)}{b(v_1\wedge v_2)} = -v_2\,c_1$, and
$J(v_2)=v_1\,\frac{h(v_2,v_2)}{b(v_1\wedge v_2)} = v_1\,c_2$. %
Thus $J(v_1x_1+v_2x_2)=v_1c_2\gal{x_2}-v_2c_1\gal{x_1}$; in
coordinates with respect to the basis $v_1,v_2$, we have
\[
\setlength{\arraycolsep}{.1em}
\begin{array}{rccrcl}
  h\colon \F^2\times\F^2\to\F\colon &
  \left(
  \left(
    \begin{matrix}
      x_1 \\ x_2
    \end{matrix}
  \right)
  ,
  \left(
    \begin{matrix}
      y_1 \\ y_2
    \end{matrix}
  \right)
  \right)
  &\mapsto&
  \left(
  \gal{x_1},\gal{x_2}\strut
  \right)
  \left(
  \begin{matrix}
    c_1 & 0 \\
    0 & c_2
  \end{matrix}
  \right)
  \left(
  \begin{matrix}
    y_1 \\ y_2
  \end{matrix}
  \right)
  &=& c_1\,\gal{x_1}\,{y_1}+c_2\,\gal{x_2}\,{y_2}
  \\[2ex]
  \text{and}\quad
  J \colon \F^2\to\F^2\colon &
  \left(
  \begin{matrix}
    x_1 \\ x_2
  \end{matrix}
  \right)
  &\mapsto&
  \left(
  \begin{matrix}
    0 & c_2 \\
    -c_1 & 0
  \end{matrix}
  \right)
  \left(
  \begin{matrix}
    \gal{x_1} \\ \gal{x_2}
  \end{matrix}
  \right)
  &=&
  \left(
  \begin{matrix}
    c_2\gal{x_2} \\ -c_1\gal{x_1}
  \end{matrix}
    \right)
\,.
\end{array}
\]
Identifying $x_1+\bj x_2 \in \K_1$ with
$v_1(x_1+\bj x_2) = v_1x_1-v_2c_1x_2 \in V$ and using $\Ext[1]h=h$ we
compute $g(X,Y)=\gal{X}c_1 Y$, see~\ref{gDiagonal}.  %
Straightforward verification yields

\begin{lemm}
  The centralizer of~$J$ in the ring~$\F^{2\times2}$ of $\F$-linear
  endomorphisms of~$V$ is %
  \(%
  \L \coloneqq \set{\left(
      \begin{smallmatrix}
        a & -\gal{b}c_2 \\
        bc_1 & \gal{a}
      \end{smallmatrix}\right)}{a,b\in\F} 
  \). %
  The $\R$-algebras~$\L$ and~$\K_1$ are isomorphic; the norm on~$\L$
  is given by the determinant.  %
  The group $\SOV{V,h}$, or $\SUV{V,h}$, respectively, coincides with
  $\Ss_\L \coloneqq \L \cap \SL[2]\F$. %
  \qed
\end{lemm}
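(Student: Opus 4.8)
The plan is to verify the three assertions by direct computation in $\F^{2\times2}$; throughout, let $H=\left(\begin{smallmatrix}c_1&0\\0&c_2\end{smallmatrix}\right)$ be the Gram matrix of $h$, so that $\det H=c_1c_2\neq0$ since $h$ is non-degenerate. First I would compute the centralizer of $J$. In the coordinates fixed above, $J$ is the $\F$-$\sigma$-semilinear map $v\mapsto C_0\,\gal v$ with $C_0=\left(\begin{smallmatrix}0&c_2\\-c_1&0\end{smallmatrix}\right)$, where $\gal v$ denotes the column obtained from $v$ by applying $\sigma$ to each entry. For $A\in\F^{2\times2}$, acting from the left, we have $(A\circ J)(v)=AC_0\,\gal v$ and $(J\circ A)(v)=C_0\,\gal{Av}=C_0\,\gal{A}\,\gal v$; since $\gal v$ runs through all of $\F^2$ as $v$ does, $A$ centralizes $J$ if and only if $AC_0=C_0\,\gal A$. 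Writing $A=\left(\begin{smallmatrix}a&b\\c&d\end{smallmatrix}\right)$ and comparing the four entries — using that $c_1,c_2\in\R$ are fixed by $\sigma$ and nonzero — gives $d=\gal a$ and $c=-c_1\gal b/c_2$; replacing the parameter $b$ by $-\gal b\,c_2$ (a bijective change, as $c_2\neq0$) turns the solution set into exactly $\L$. The same fact admits a conceptual reading: by~\ref{def:Kmodule}, $\Ext[1]{V}=V$ is a free right $\K_1$-module with basis $\{v_1\}$, and $J$ is right multiplication by $\bj_1$, which together with $\F$ generates the ring $\K_1$; hence an $\F$-linear endomorphism centralizes $J$ precisely when it is $\K_1$-linear, so the centralizer is $\End[\K_1]{V}$, which for a free module of rank one is isomorphic to $\K_1$.

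Next I would pin down the isomorphism $\L\cong\K_1$ and the norm. Since $b(v_1\wedge v_2)=1$ and $\det H=c_1c_2$, formula~\ref{squareHodge} yields $\delta_1=-c_1c_2$, so by~\ref{def:algebraK} the $\R$-algebra $\K_1$ consists of the matrices $\left(\begin{smallmatrix}x_0&-c_1c_2\,\gal{x_1}\\x_1&\gal{x_0}\end{smallmatrix}\right)$ and carries the multiplicative quadratic form $\det_\F$. Conjugation by $\left(\begin{smallmatrix}1&0\\0&c_1\end{smallmatrix}\right)$ sends $\left(\begin{smallmatrix}a&-\gal b\,c_2\\b\,c_1&\gal a\end{smallmatrix}\right)$ to $\left(\begin{smallmatrix}a&-c_1c_2\,\gal b\\b&\gal a\end{smallmatrix}\right)$, and is therefore an isomorphism $\L\to\K_1$ of $\R$-algebras; being a conjugation it preserves $\det_\F$, so the norm on $\L$ is also $\det_\F$. (Equivalently, one checks directly that $A\mapsto\operatorname{adj}A$ is the standard involution of $\L$, so that $A\cdot\operatorname{adj}A=\det_\F(A)\,\id$.)

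Finally I would identify $\SUV{V,h}$ — to be read as $\SOV{V,h}$ when $\sigma=\id$ — with $\Ss_\L$. By~\ref{semiSimilitudesAct} the group $\SUV{V,h}$ centralizes $J$, hence $\SUV{V,h}\subseteq\L$ and so $\SUV{V,h}\subseteq\L\cap\SL[2]\F=\Ss_\L$. For the reverse inclusion, a short computation gives $\gal{A}\transp H A=\det_\F(A)\,H$ for every $A\in\L$; consequently an element $A\in\Ss_\L$, having $\det_\F A=1$, satisfies $\gal{A}\transp HA=H$, i.e.\ $A$ is an isometry of $h$ of determinant~$1$, so $A\in\SUV{V,h}$. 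Hence $\Ss_\L=\SUV{V,h}$, and the argument is identical for $\SOV{V,h}$ in the case $\sigma=\id$.

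I do not expect a genuine obstacle: every step is a finite matrix computation, uniform in the characteristic and valid whether or not $\sigma$ is trivial. The only thing requiring consistent care is the semilinearity of $J$ together with the convention that scalars act on the right and matrices on the left — this is exactly what replaces the naive commutator $AC_0=C_0A$ by the twisted relation $AC_0=C_0\,\gal A$, on which the whole computation hinges.
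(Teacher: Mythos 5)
Your proof is correct, and it is exactly the ``straightforward verification'' that the paper invokes without writing out: the twisted commutation relation $AC_0=C_0\,\gal{A}$, the conjugation by $\left(\begin{smallmatrix}1&0\\0&c_1\end{smallmatrix}\right)$ identifying $\L$ with $\K_1$ (with $\delta_1=-c_1c_2$), and the identity $\gal{A}^{\intercal}HA=\det_\F(A)\,H$ for $A\in\L$ all check out. No gaps.
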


Note that~$\K_1$ is split precisely if~$-c_1c_2$ is a norm,
see~\ref{squareHodge} and~\ref{def:algebraK}. In that case, we may
(upon scaling of the form and one of the basis vectors) assume $c_1=1$
and $c_2=-1$; then $1+\bj$ is a divisor of zero in~$\K_1$.

\begin{theo}
  Assume $\ell=1$ and $n=2$. 
  \begin{enumerate}
  \item If\/~$\K_1$ is not split and\/ $\sigma=\id$ then~$\L = \K_1$. We
    obtain a quadratic field extension $\K_1|\F$, and\/ $\SOV{V,h}$ is
    the norm one group\/~$\Ss_{\K_1}$ in~$\K_1$. %
    
    Note that $\Ss_{\K_1}=\{1\}$ if $\K_1|\F$ is inseparable.
  \item If\/~$\K_1$ is not split and\/ $\sigma\ne\id$ then $\K_1$ is a
    quaternion field over~$\R$. We obtain $\L\cong\K_1$ and\/
    $\SUV{V,h} = \Ss_\L\cong\Ss_{\K_1}$. %
    
    Note that\/ $\SUV{V,h}$
    and\/~$\Ss_{\K_1}$ do not coincide in the ring of\/ $\R$-linear
    endomorphisms of\/~$V$; the elements of these two groups may be
    interpreted as left and right multiplications, respectively, by
    elements of norm one in~$\K_1$.
  \item If\/~$\K_1$ is split, $\Char\F\ne2$, and\/ $\sigma=\id$ then~$\L
    = \K_1 \cong \F\times\F$. Thus $\SOV{V,h} = \Ss_\L \cong
    \Ss_{\F\times\F} = \set{(x,x^{-1})}{x\in\F^\myTimes} \cong
    \F^\myTimes$.
  \item If\/~$\K_1$ is split, $\Char\F=2$, and\/ $\sigma=\id$ then $\L =
    \K_1\cong\F[X]/(X^2)$. %
    Thus $\SOV{V,h} = \Ss_{\K_1} \cong \Ss_{\F[X]/(X^2)} =
    \set{1+bX+(X^2)}{b\in\F}$ is isomorphic to the \emph{additive}
    group of~$\F$. 
  \item If\/~$\K_1$ is split and\/ $\sigma\ne\id$ then
    $\K_1\cong\R^{2\times2}$. Thus
    $\SUV{V,h} = \Ss_\L \cong \Ss_{\R^{2\times2}} = \SL[2]\R$ in this
    case. %
    \qed
  \end{enumerate}
\end{theo}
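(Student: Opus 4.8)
The plan is to read the theorem off the preceding Lemma together with the structure theory of the algebra~$\K_1$ developed in Sections~\ref{sec:modulesQuat} and~\ref{sec:SplitCases}. That Lemma identifies $\SOV{V,h}$ (when $\sigma=\id$), respectively $\SUV{V,h}$ (when $\sigma\ne\id$), with $\Ss_\L=\L\cap\SL[2]\F$, exhibits an isomorphism of $\R$-algebras $\L\cong\K_1$, and records that the norm on~$\L$ is the determinant; consequently $\SOV{V,h}$, respectively~$\SUV{V,h}$, is isomorphic to the norm-one group $\Ss_{\K_1}$, and it remains only to pin down the isomorphism type of~$\K_1$ in each of the five cases and to describe its norm-one group. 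When $\sigma=\id$ the algebra~$\K_1$ (equivalently~$\L$) is commutative and is determined up to unique isomorphism by the square class of~$\delta_1$, so one may write $\L=\K_1$ without ambiguity; when $\sigma\ne\id$ the subalgebras $\L$ and~$\K_1$ of $\F^{2\times2}$ are in general distinct subsets of $\End[\R]{V}$, but still abstractly isomorphic, and only that isomorphism will be used.

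For the two non-split cases I would invoke~\ref{rems:compositionalgebra}: since $\delta_1=-c_1c_2$ is then not a norm, $\K_1|\F$ is a quadratic field extension when $\sigma=\id$ and a quaternion division algebra over~$\R$ when $\sigma\ne\id$, which are the cases (a) and (b). In case~(a), if the extension is separable its norm-one group is simply $\Ss_{\K_1}$ as asserted; if it is inseparable (forcing $\Char\F=2$) then~\ref{rems:compositionalgebra} gives $\kappa=\id$ and $N(x)=x\gal{x}=x^2$, so that $\Ss_{\K_1}=\set{x\in\K_1}{x^2=1}=\{1\}$ because $\K_1$ is then a field of characteristic two. In case~(b) nothing further need be computed; for the parenthetical remark one recalls from~\ref{def:Kmodule}, specialised to $\ell=1$, that $V$ is a free right $\K_1$-module of rank one, hence $V\cong\K_1$ as right $\K_1$-modules, and under this identification $\SUV{V,h}$ acts by left multiplication by the norm-one elements of~$\K_1$ while the module structure realises $\Ss_{\K_1}$ by right multiplication---two subgroups of $\End[\R]{V}$ that do not coincide precisely because~$\K_1$ is noncommutative.

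For the three split cases I would appeal to~\ref{splitCases}, which gives $\K_1\cong\F\times\F$ with $\kappa(a,b)=(b,a)$ when $\sigma=\id$ and $\Char\F\ne2$, $\K_1\cong\F[X]/(X^2)$ with $\kappa=\id$ when $\sigma=\id$ and $\Char\F=2$, and $\K_1\cong\R^{2\times2}$ with $\kappa$ the adjugate involution when $\sigma\ne\id$. A one-line computation of the norm then closes each case: in~(c) one has $N(a,b)=ab$, so norm one forces $b=a^{-1}$ and $\Ss_{\K_1}=\set{(a,a^{-1})}{a\in\F^\myTimes}\cong\F^\myTimes$; in~(d) one has $N(a+bX)=(a+bX)^2=a^2$ in characteristic two, so norm one forces $a=1$ and $\Ss_{\K_1}=\set{1+bX+(X^2)}{b\in\F}$, isomorphic to the additive group of~$\F$; in~(e) the norm is the determinant, so $\Ss_{\K_1}=\SL[2]\R$.

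I do not expect a real obstacle here, since the statement is essentially a dictionary between the Lemma's identification $\SOV{V,h}=\Ss_\L$ (respectively $\SUV{V,h}=\Ss_\L$) and the already-established classification of~$\K_\ell$. The only points that call for a moment's care are the inseparable sub-case of~(a), where one must not overlook that the standard involution degenerates so that the norm becomes a square, and the bookkeeping in~(b) that distinguishes the left- and right-multiplication realisations inside $\End[\R]{V}$; both are dealt with by quoting~\ref{rems:compositionalgebra} and~\ref{def:Kmodule} respectively.
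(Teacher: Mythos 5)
Your proposal is correct and matches the paper's (implicit) argument: the theorem is stated with no further proof precisely because it is read off from the preceding Lemma ($\SOV{V,h}$ resp.\ $\SUV{V,h}$ equals $\Ss_\L=\L\cap\SL[2]\F$, with the norm on $\L\cong\K_1$ given by the determinant) combined with the classification of $\K_\ell$ in~\ref{rems:compositionalgebra} and~\ref{splitCases}, exactly as you do. Your case-by-case computations of the norm-one groups, including the inseparable subcase of (a) and the left/right multiplication distinction in (b), are accurate.
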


\begin{ndef}[Examples: Symmetric bilinear forms on $\noexpand\RR^2$
  and on $\noexpand\CC^2$]\label{ex:O2RR}%
  We consider $\F = \RR$ and ${\sigma=\id}$. %
  Essentially (i.e., up to similarity), there are two non-degenerate
  symmetric bilinear forms on~$\RR^2$: %
  An anisotropic form~$h_+$, and a hyperbolic form~$h_-$.

  The discriminant of~$h_+$ is~$1$, so $\delta_1=-1$ and $\bj=\left(
    \begin{smallmatrix}
      0 & -1 \\
      1 & 0
    \end{smallmatrix}\right)$ %
  and $\K_1 \cong \CC$ (which comes as no surprise). %
  Note that $\L=\K_1$. %
  We obtain %
  $\SOV{\RR^2,h_+} = \Ss_\L \cong \Ss_{\CC} =
  \set{c\in\CC}{c\,\gal{c}=1}$. %
  In particular, this group is homeomorphic to a circle. %
  As~$h_+$ is (positive or negative) definite, every similitude
  of~$h_+$ has positive multiplier. Thus the multiplier is a square,
  and $\GOV{\RR^2,h_+} = \RR^\myTimes\OV{\RR^2,h_+}$. Via conjugation,
  the scaling factors act trivially on~$M_\ell\cong\K_1\cong\CC$
  (see~\ref{similitudeAlgebraAut}). %
  Each $\gamma\in \OV{\RR^2,h_+}\smallsetminus\SOV{\RR^2,h_+}$ has
  determinant~$1$, and induces complex conjugation on~$\CC$.

  The discriminant of~$h_-$ is $-1$, we obtain $\delta_1=1$,
  $\bj=\left(
    \begin{smallmatrix}
      0 & 1 \\
      1 & 0
    \end{smallmatrix}\right)$ %
  and $\K_1 = \set{\left(
      \begin{smallmatrix}
        a & b \\
        b & a
      \end{smallmatrix}\right)}{a,b\in\RR} %
  \cong \RR\times\RR$ in this case; an explicit isomorphism from
  $\RR\times\RR$ onto~$\K_1$ is given by $(x,y)\mapsto %
  \frac12\left(
    \begin{smallmatrix}
      x & x \\
      x & x
    \end{smallmatrix}\right) %
  + %
  \frac12\left(
    \begin{smallmatrix}
      y & -y \\
      -y & y
    \end{smallmatrix}\right)$. %
  Note that $\L=\K_1$, again. %
  The isomorphism just stated maps $\SOV{\RR^2,h_-} = \Ss_\L$ onto
  $\set{(a,a^{-1})}{a\in\RR^\myTimes} \subset \RR\times\RR$.

  We may assume that the Gram matrix (with respect to the standard
  basis) is $H=\left(
    \begin{smallmatrix}
      0 & 1 \\
      1 & 0
    \end{smallmatrix}\right)$. %
  Then $v_1 \coloneqq \binom11$, $v_2\coloneqq \binom1{-1}$ is an
  orthogonal basis; we have $h(v_1,v_1)=2$ and $h(v_2,v_2)=-2$. Taking
  $b\colon\Ext[2]{\RR^2}\to\RR$ with $b(v_1\wedge v_2)=-2$, we obtain
  $J(v_1)=v_2$ and $J(v_2)=v_1$.  Thus the matrix $\left(
    \begin{smallmatrix}
      1 & 0 \\
      0 & -1
    \end{smallmatrix}\right)$ %
  describes~$J$ with respect to the standard basis.

  In standard coordinates, we now obtain
  $\SOV{\RR^2,h_-} = \set{\left(
      \begin{smallmatrix}
        a & 0 \\
        0 & a^{-1}
      \end{smallmatrix}\right)}{a\in\RR^\myTimes}$. %
  The orthogonal group is the union of $\SOV{\RR^2,h_-}$ and the coset
  $\left(
    \begin{smallmatrix}
      0 & 1 \\
      1 & 0
    \end{smallmatrix}\right) %
  \SOV{\RR^2,h_-}$. %
  Note that conjugation by $\left(
    \begin{smallmatrix}
      0 & 1 \\
      1 & 0
    \end{smallmatrix}\right)$ maps~$J$ to~$-J$.
  (Using the isomorphism above, this corresponds to swapping $(1,-1)$
  with $(-1,1)$ in $\RR\times\RR$.)  %
  For each $a\in\RR^\myTimes$, the matrix $S_a \coloneqq \left(
    \begin{smallmatrix}
      1 & 0 \\
      0 & a
    \end{smallmatrix}\right)$ %
  describes a similitude with multiplier~$a$.  These similitudes
  centralize~$J$.

  On $\CC^2$, there is essentially only one non-degenerate symmetric
  bilinear form. That form is hyperbolic, the arguments for the
  form~$h_-$ above remain valid \emph{verbatim}. %
  Actually, that reasoning holds for every hyperbolic form on a
  two-dimensional vector space over any field~$\F$ with $\Char\F\ne2$.
\end{ndef}

\begin{ndef}[Examples: Hermitian forms on $\noexpand\CC^2$]\label{ex:U2CC}%
  We consider $\F = \CC$, and take complex conjugation for~$\sigma$. %
  Essentially, there are two non-degenerate $\sigma$-hermitian forms
  on~$\CC^2$: an anisotropic form~$h_+$ and a hyperbolic form~$h_-$.

  The discriminant of~$h_+$ is~$1$, so $\delta_1=-1$ and $\bj=\left(
    \begin{smallmatrix}
      0 & -1 \\
      1 & 0
    \end{smallmatrix}\right)$. The algebra~$\K_1$ is not split, so
  $\K_1\cong\HH$, the skew field of Hamilton's quaternions. %
  We obtain $\SUV{\CC^2,h_+} = \Ss_\L \cong \Ss_\HH$. %
  In particular, this group is homeomorphic to a $3$-sphere. %
  We may assume that the Gram matrix of~$h_+$ is $\left(
    \begin{smallmatrix}
      1 & 0 \\
      0 & 1
    \end{smallmatrix}\right)$. %
  Then $J\binom xy=\binom{-\gal{y}}{\gal{x}}$, and the group
  $\UV{\CC^2,h_+}$ is the semi-direct product of $\SUV{\CC^2,h_+}$ and
  $\set{\left(
      \begin{smallmatrix}
        1 & 0 \\
        0 & u
      \end{smallmatrix}\right)}{u\in\CC, u\gal{u}=1}$. %
  In~$M_1$, conjugation by $\left(
    \begin{smallmatrix}
      1 & 0 \\
      0 & u
    \end{smallmatrix}\right)$ maps~$J$ to %
  $u\,\id\circ J\colon \binom xy\mapsto \binom{-u\gal{y}}{u\gal{x}}$;
  see~\ref{semiSimilitudesAct}. %
  As $h_+$ is a (positive or negative) definite form, each similitude
  has positive multiplier. Therefore, each multiplier is a norm, and
  $\gUV{\CC^2,h_+}=\CC^\myTimes\SUV{\CC^2,h_+}$.  Conjugation by
  $c\,\id$ maps $J$ to $c/\gal{c}\,J$, and induces on~$M_1\cong\HH$ an
  inner automorphism (namely, conjugation by~$c$) that fixes
  $\CC\,\id$ pointwise and multiplies each element of
  $\CC\,\id\circ J$ by the factor $c/\gal{c} \in \Ss_\CC$. %
  (Note that, by Hilbert's Theorem 90 (see~\cite[4.31]{MR780184}),
  each element of $\Ss_\CC$ is obtained in this way, as a quotient
  $c/\gal{c}$ for some $c\in\CC^\myTimes$.) %
  
  The discriminant of~$h_-$ is~$-1$, so $\delta_1=1$ and $\bj=\left(
    \begin{smallmatrix}
      0 & 1 \\
      1 & 0
    \end{smallmatrix}\right)$ in this case.
  The algebra $\K_1 = \set{\left(
      \begin{smallmatrix}
        x & \gal{y} \\
        y & \gal{x}
      \end{smallmatrix}\right)}{x,y\in\CC}$ is then isomorphic to
  $\RR^{2\times2}$; in fact, an explicit isomorphism is obtained by
  $\RR$-linear extension of $\left(
    \begin{smallmatrix}
      1 & 0 \\
      0 & 1
    \end{smallmatrix}\right) \mapsto
  \left(
    \begin{smallmatrix}
      1 & 0 \\
      0 & 1
    \end{smallmatrix}\right)$, %
  $\left(
    \begin{smallmatrix}
      i & 0 \\
      0 & -i
    \end{smallmatrix}\right) \mapsto
  \left(
    \begin{smallmatrix}
      0 & -1 \\
      1 & 0
    \end{smallmatrix}\right)$,
  $\bj = \left(
    \begin{smallmatrix}
      0 & 1 \\
      1 & 0
    \end{smallmatrix}\right) \mapsto
  \left(
    \begin{smallmatrix}
      0 & 1 \\
      1 & 0
    \end{smallmatrix}\right)$,
  and %
  $\left(
    \begin{smallmatrix}
      0 &-i \\
      i & 0
    \end{smallmatrix}\right) \mapsto
  \left(
    \begin{smallmatrix}
      1 & 0 \\
      0 &-1
    \end{smallmatrix}\right)$. %
  We find that $\SUV{\CC^2,h_-}$ is isomorphic to
  $\Ss_{\K_1} \cong \Ss_{\RR^{2\times2}} = \SL[2]\RR$. %
  We may assume that the Gram matrix is $\left(
    \begin{smallmatrix}
      1 & 0\\
      0 & -1
    \end{smallmatrix}\right)$. Then $\UV{\CC^2,h_-}$ is the
  semi-direct product of $\SUV{\CC^2,h_-}$ with $\set{\left(
      \begin{smallmatrix}
        1 & 0 \\
        0 & u
      \end{smallmatrix}\right)}{u\in\CC, u\gal{u}=1}$. %
  As above (in the anisotropic case), conjugation by $\left(
    \begin{smallmatrix}
      1 & 0 \\
      0 & u
    \end{smallmatrix}\right)$ maps~$J$ to %
  $u\,\id\circ J\colon \binom xy\mapsto \binom{-u\gal{y}}{u\gal{x}}$. %

  Translated into the algebra $\RR^{2\times2}$ (via the explicit
  isomorphism above) the isomorphism for $u=\cos(t)+i\sin(t)$ is
  obtained as conjugation by $\left(
    \begin{smallmatrix}
      \cos(t/2) & -\sin(t/2) \\
      \sin(t/2) & \cos(t/2)
    \end{smallmatrix}\right)$. %

  For each similitude $\gamma\in\GUV{\CC^2,h_-}$, the multiplier
  $\mu_\gamma$ lies in~$\RR$ because
  $\set{h(v,v)}{v\in\CC^2} \subseteq \RR$. So every possible
  multiplier is obtained from some
  $\gamma\in S \coloneqq \CC^\myTimes\id\cup\CC^\myTimes\left(
    \begin{smallmatrix}
      0 & 1 \\
      1 & 0
    \end{smallmatrix}\right)$, and $\gUV{\CC^2,h_-}$ is the
  semidirect product of $\UV{\CC^2,h_-}$ with~$S$.  The operator
  $J\in M_1$ is centralized by $\left(
    \begin{smallmatrix}
      0 & 1 \\
      1 & 0
    \end{smallmatrix}\right)$, and
  conjugation of~$J$ by $c\,\id$ yields $(c/\gal{c}\,\id)\circ J$.
\end{ndef}

\section{Norm forms of composition algebras}
\label{sec:rangeSplit}

We concentrate on the case $\ell=2$ and $n=2\ell=4$ from now on.  Note
that the factors $(-1)^\ell$ become irrelevant. %
We normalize $b(v_1\wedge v_2\wedge v_3\wedge v_4) = 1$ and then
simplify notation, writing
\[
  \K \coloneqq \K_2, \quad
  \delta \coloneqq \delta_2 = \det(H),
  \quad
  \bj \coloneqq \bj_2
  \quad\text{and}\quad \eta \coloneqq \eta_2;
\]
here $H$ is the Gram matrix describing $h$ with respect to the
orthogonal basis $v_1,v_2,v_3,v_4$, and $\eta=\eta_2$ is the
homomorphism constructed in Section~\ref{sec:modulesQuat}.

In this section we study the split case; our aim is to understand
$\eta^o(\SUV{V,h})$ where~$\eta^o$ is as in~\ref{restrictForm}. %
Recall that $\K$ splits precisely if $\delta = \det(H)$ is a norm,
i.e., if the form $h$ has discriminant
$\disc{h}=1\in\F^\myTimes/N_{\F|\R}(\F^\myTimes)$. %
This excludes one of the three possible values for the Witt index; in
fact, Witt index~$1$ is impossible in the split case:

\begin{lemm}\label{wittIndexSplit}
  If $\disc{h}=1$ and\/~$h$ has positive Witt index then $h$ has
  Witt index~$2$.
\end{lemm}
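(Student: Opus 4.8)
The plan is to rule out Witt index $1$ directly by a Gram‑determinant computation. Suppose $h$ has Witt index exactly $1$ on the $4$‑dimensional space $V$. Then $V$ splits orthogonally as $V = S \perp V_0$, where $S$ is a hyperbolic plane (a non‑degenerate isotropic subspace of dimension $2$) and $V_0$ is an anisotropic complement of dimension $2$. First I would choose a basis adapted to this decomposition: on $S$, pick an orthogonal basis $v_1,v_2$ with $h(v_1,v_1)=c$ and $h(v_2,v_2)=-c\,N_{\F|\R}(t)$ for a suitable scalar (the standard shape of a hyperbolic plane in terms of an orthogonal basis — recall from the proof of~\ref{gAnisotropicIfhIs} that $v_1\coloneqq x+y$, $v_2\coloneqq x-y$ turns a hyperbolic pair into an orthogonal pair with $h(v_1,v_1)=-h(v_2,v_2)$ when $\Char\F\ne2$, and a similar adjustment in characteristic~$2$); and on $V_0$ pick an orthogonal basis $v_3,v_4$. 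The Gram matrix $H$ is then block‑diagonal, so $\det(H) = \det(H|_S)\cdot\det(H|_{V_0})$.

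The key step is to observe that $\det(H|_S)$ is, up to a norm, equal to $-1$: a hyperbolic plane has discriminant $-1 \in \F^\myTimes/N_{\F|\R}(\F^\myTimes)$ (indeed $h(v_1,v_1)h(v_2,v_2) = -c^2 N_{\F|\R}(t) = -N_{\F|\R}(ct)$ in the notation above, and the analogous identity holds in characteristic~$2$). Hence the hypothesis $\disc h = 1$, i.e.\ $\det(H) \in N_{\F|\R}(\F^\myTimes)$, forces $\disc(h|_{V_0}) = -1 \in \F^\myTimes/N_{\F|\R}(\F^\myTimes)$ as well. But $h|_{V_0}$ is a non‑degenerate form on a $2$‑dimensional space whose discriminant is $-1$; by the case $\ell=1$, $n=2$ analysis of the algebra $\K_1$ (see the discussion after~\ref{squareHodge} and~\ref{def:algebraK}: $\K_1$ is split precisely when $-c_1c_2$ is a norm, equivalently when the discriminant of the $2$‑dimensional form is $-1$), the form $h|_{V_0}$ has split $\K_1$, and a split binary form is isotropic. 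This contradicts the anisotropy of $V_0$. Therefore Witt index $1$ cannot occur, and since the Witt index is positive by assumption it must be $2$.

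The main obstacle I anticipate is bookkeeping in characteristic~$2$ with $\sigma=\id$: there one must be careful that a "hyperbolic plane" for a symmetric bilinear form still has an orthogonal basis (guaranteed by the standing diagonalizability assumption, but the isotropic pair must be converted to an orthogonal one as in the characteristic‑$2$ branch of the proof of~\ref{gAnisotropicIfhIs}, where one uses an extra vector $u\in S^\perp$ when the restriction to $S$ would otherwise be alternating) and that the discriminant computation still yields $-1$ as a square class. One also needs that "discriminant $-1$ on a binary space implies isotropic" remains valid in this setting, which is exactly the content recorded in the case $\ell=1$: $\K_1$ splits iff $\disc h$ is the relevant square class, and a split $\K_1$ produces a zero divisor $1+\bj_1$, hence an isotropic vector for $g = h$. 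Alternatively, and perhaps more cleanly, one can phrase the whole argument invariantly in terms of discriminants in $\F^\myTimes/N_{\F|\R}(\F^\myTimes)$ using Witt's theory, avoiding explicit bases altogether: $\disc$ is multiplicative over orthogonal sums, a hyperbolic plane has discriminant $-1$, and an anisotropic binary form cannot have discriminant $-1$; this is the route I would ultimately take, relegating the characteristic‑two subtleties to a reference such as~\cite[I.3.4]{MR0506372} or~\cite{MR0506372} III.
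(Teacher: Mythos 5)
Your argument is correct and is essentially the paper's own proof run in contrapositive form: the paper likewise splits off the non-degenerate plane spanned by an isotropic vector $w_1$ and a vector $w_2$ with $h(w_1,w_2)=1$, notes that $\disc h=1$ forces $-h(w_3,w_3)h(w_4,w_4)$ to be a norm on an orthogonal basis $w_3,w_4$ of the complement, and writes down the isotropic vector $w_3\,c+w_4\,h(w_3,w_3)$ there --- which is precisely the witness for your claim that a binary diagonalizable form of discriminant $-1$ is isotropic. The characteristic-two subtleties you flag are handled by the same elementary device in the paper (the plane $w_1\F+w_2\F$ still has discriminant $-1$ even though it need not be hyperbolic there, and an anisotropic binary complement is automatically non-alternating and hence diagonalizable), so there is no genuine gap.
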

\begin{proof}
  The Witt index is at most $2$ because the form is not degenerate.

  Assume first that $\Char\F\ne2$ or that $\sigma\ne\id$; then~$h$ is
  trace-valued.  If the Witt index is positive then there exists a
  hyperbolic pair, i.e.  $w_1,w_2\in V$ with $h(w_1,w_1)=0=h(w_2,w_2)$
  and $h(w_1,w_2)=1$. We pick an orthogonal basis $w_3,w_4$ for
  $\{w_1,w_2\}^\perp$ and obtain
  $-h(w_3,w_3)h(w_4,w_4)\in\disc{h}=N_{\F|\R}(\F^\myTimes)$. Thus there
  exists $c\in\F$ with $-h(w_3,w_3)h(w_4,w_4)=\gal{c}c$, the vector
  $w_3\,c+w_4\,h(w_3,w_3)\in\{w_1,w_2\}^\perp$ is isotropic, and the
  Witt index is at least~$2$.

  There remains the case $\Char\F=2$ and $\sigma=\id$. We assume that
  there exists an isotropic vector $w_1\in V\smallsetminus\{0\}$. Then
  there exists $w_2$ with $h(w_1,w_2)=1$ but it will in general not be
  possible to achieve $h(w_2,w_2)=0$. Now $\{w_1,w_2\}^\perp$ is a
  vector space complement to $w_1\F + w_2\F$.  As the Witt index is at
  most~$2$, that complement contains~$w_3$ with $h(w_3,w_3)\ne0$. Thus
  we find an orthogonal basis $w_3,w_4$ for $\{w_1,w_2\}^\perp$ and an
  isotropic vector in $\{w_1,w_2\}^\perp$ as before.
\end{proof}

\begin{rems}\label{wittIndexTwoIsSplit}
  As a partial converse to~\ref{wittIndexSplit} we note that any two
  non-degenerate $\sigma$-hermitian forms of Witt index~$2$ on~$\F^4$
  are isometric; the discriminant is~$1$, and the corresponding Hodge
  operator yields a split algebra~$\K$.

  Note that over any commutative field~$\F$ with $\Char{\F}\ne2$, the
  norm forms of quaternion algebras are characterized (up to
  similitude) among the quadratic forms in four variables by the fact
  that they have discriminant~$1$; see~\cite[3.1.2]{MR3761133} for an
  explicit proof of this known result.  So~\ref{hQuaternionNorm} below
  applies if the role of~$h$ is played by the polarization of such a
  norm form.
\end{rems}

\begin{nthm}[A quaternion algebra]\label{hQuaternionNorm}
  Assume $\Char{\F}\ne2$ and\/ $\sigma=\id$.  If\/ $\K$ splits
  then~$h$ is similar to the polarization\/~$f_N$ of the norm $N$ of
  some quaternion algebra~$\H$ over~$\F$.

  Moreover, the form~$g^o$ on $Wp$ is similar to the
  restriction of\/~$f_N$ to the space $\Pu{\H} \coloneqq
  \smallset{x\in\H\smallsetminus\F}{x^2\in\F}\cup\{0\}$ of pure
  quaternions.

  The group of similitudes of\/~$f_N$ is well understood, it is
  generated by the standard involution together with left and right
  multiplications by invertible elements of the quaternion
  algebra\footnote{ \ %
  See~\cite[V\,(4.2.4), p.\,266]{MR1096299} for the general result,
  or~\cite[5.2]{MR3871471} for quaternion fields,
  or~\cite[4.5.17]{MR4279905} for arbitrary quaternion algebras with
  $\Char\F\ne2$}. %
  The representation on $W = \Ext[2]{V}$ is the sum of two
  representations that are quasi-equivalent to that on the space of
  pure quaternions, cf.~\cite[Sect.\,6]{MR2926161}. %
  In particular, we have
  $\eta^o_2(\GOV{V,h})= \F^\myTimes\,\SOV{\Pu{\H},f_N|_{\Pu{\H}}}$ and
  $\eta^o_2(\SOV{V,h}) = \SOV{\Pu{\H},f_N|_{\Pu{\H}}}$. %
\end{nthm}
\begin{proof}
  The case where $h$ has Witt index~$2$ is covered by the observation
  that the norm form of the split quaternion algebra over~$\F$ is the
  unique non-degenerate quadratic form of Witt index~$2$ in~$4$
  variables. %
  If $h$ is anisotropic then a suitable scalar multiple of $h$ is the
  polarization of the norm of a quaternion field because $\disc{h}=1$;
  see~\cite[3.1.2]{MR3761133}. %

  In any case, we may (upon scaling) assume that $2\,h$ is the
  polarization of the norm~$N$ of~$\H$. There exists an orthogonal
  basis $v_1\coloneqq 1,v_2,v_3,v_4$ such that $v_4=v_2v_3$ and that
  $v_2,v_3,v_4$ span $\Pu{\H}$. Thus
  $h(v_4,v_4) = N(v_4) = N(v_2)\,N(v_3) = h(v_2,v_2)\,h(v_3,v_3)$. Now
  the sequence $X_1 \coloneqq (v_1\wedge v_2)z$,
  $X_2 \coloneqq (v_1\wedge v_3)z$, $X_3 \coloneqq (v_1\wedge v_4)z$
  forms an orthogonal basis for~$g^o$ on~$Wz$ such that
  $g^o(X_1,X_1) = 2\,h(v_2,v_2)$, $g^o(X_2,X_2) = 2\,h(v_3,v_3)$, and
  $g^o(X_3,X_3) = 2\,h(v_2,v_2)\,h(v_3,v_3)$.  Thus $g^o$ is isometric
  to the restriction of~$2\,h$ to~$\Pu{\H}$.
\end{proof}

\begin{rema}
  We have excluded the characteristic~$2$ case
  in~\ref{hQuaternionNorm}. Indeed, the situation becomes more
  complicated, see~\cite{KramerStroppel-hodge-char2-arXiv}.
\end{rema}

\removelastskip%
\enlargethispage{13mm}%
\begin{ndef}[An octonion algebra]\label{octonion}
  We consider $\sigma\ne\id$ and $\Char\F\ne2$ and assume that~$\K$
  splits; so $\K \cong \R^{2\times2}$. %
  Replacing $h$ by a suitable scalar multiple, we may assume that
  there is an orthogonal basis $v_1,v_2,v_3,v_4$ such that
  $h(v_1,v_1)=1$ and $h(v_4,v_4)=h(v_2,v_2)\,h(v_3,v_3)$; this last
  condition can be satisfied because~$h$ has discriminant~$1$. Then we
  may regard the $\R$-linear span
  $\H:= v_1\R\oplus v_2\R\oplus v_3\R\oplus v_4\R$ as a quaternion
  algebra %
  with norm $N_\H(x)=h(x,x)$. Choosing any $q\in\F^\myTimes$ with $\sigma(q)=-q$
  we obtain $V=\H\oplus\H q$ and that the quadratic form
  $N(v):=h(v,v)$ is the norm of the octonion algebra~$\C$ obtained as
  the $q^2$-double of~$\H$; cf.~\cite[1.5.3]{MR1763974}
  or~\cite[p.\,444\,f]{MR780184}.  Finally, we identify $\F$ with the
  subalgebra $v_1\F = v_1\R\oplus v_1q\R$ of~$\C$ and note %
  $(v_1\F)^\perp= v_2\F\oplus v_3\F\oplus v_4\F$.
\end{ndef}

\begin{exas}
  The quaternion algebra~$\H$ in~\ref{octonion} need not be isomorphic
  to the split quaternion algebra~$\K$. %
  In general, the norm form of \emph{every} quaternion field~$B$
  over~$\R$ has discriminant~$1$, and extending that form to a
  hermitian form on the tensor product $V \coloneqq \F\otimes_\R B$ we
  obtain a case where $\H \cong B$ is non-split, and not isomorphic
  to~$\K$.

  For instance, consider $\F = \CC$ and $\R = \RR$. The (essentially
  unique) positive definite hermitian form on~$\CC^4$ has
  discriminant~$1$; then the restriction of the form~$h$ to~$\H$ is
  positive definite, and the quaternion algebra~$\H$ is isomorphic to
  the quaternion field~$\HH$ (and is, in particular, not split in this
  case).  The hermitian form of Witt index~$2$ also has
  discriminant~$1$, but the quaternion algebra~$\H$ is split in that
  case (and then isomorphic to~$\RR^{2\times2} \cong \K$).
\end{exas}

\begin{theo}\label{splitSigmaNotId}
  Assume $\sigma\ne\id$, $\Char\F\ne2$ and that\/~$\K$
  splits. Let\/~$f_N$ denote the polarization of the norm of the
  octonion algebra~$\C$ introduced in~\ref{octonion}. %
  Then the form~$g^o$ on $Wz=Wp$ is equivalent to the %
  restriction of\/~$f_N$ to~${\F^\perp}$ in that octonion algebra. %
  So $\eta^o_2$ may be interpreted as a homomorphism %
  $\eta^o_2\colon \SUV{V,h} \to \OV{\F^\perp,N|_{\F^\perp}}$.
\end{theo}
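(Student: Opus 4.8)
The plan is to argue exactly as in the proof of~\ref{hQuaternionNorm}, with the octonion algebra~$\C$ of~\ref{octonion} replacing the quaternion algebra: I would exhibit a $g^o$-orthogonal $\R$-basis of~$Wz$ and an $f_N$-orthogonal $\R$-basis of~$\F^\perp$ realizing the same list of norms, and then read off the isometry $(Wz,g^o)\cong(\F^\perp,f_N|_{\F^\perp})$. First I would fix the orthogonal basis $v_1,v_2,v_3,v_4$ of~$V$ supplied by~\ref{octonion}, so that $h(v_1,v_1)=1$ and $h(v_4,v_4)=h(v_2,v_2)h(v_3,v_3)$; then $\det H=\bigl(h(v_2,v_2)h(v_3,v_3)\bigr)^2$ is a square in~$\R$, so after rescaling~$b$ so that $\delta=1$ (legitimate since $\det H$ is a norm; cf.\ the remarks following~\ref{squareHodge}) we get $\bj^2=1$, $J^2=\id$, $z=1+\bj$, and $Wz=Wp$ is the space $\{w\in W:J(w)=w\}$ of vectors fixed by the $\F$-$\sigma$-semilinear involution~$J$, an $\R$-form of~$W=\Ext[2]{V}$ of $\R$-dimension $6=\dim_\R\F^\perp$.

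Next I would choose $q\in\F^\myTimes$ with $\gal q=-q$ as in~\ref{octonion}, so that $\{1,q\}$ is an $\R$-basis of~$\F$, $q^2\in\R$, and $\gal qq=-q^2$, and put
\[
  X_i\coloneqq(v_1\wedge v_{i+1})z,\qquad X_i'\coloneqq\bigl((v_1\wedge v_{i+1})q\bigr)z\qquad(i=1,2,3).
\]
These six vectors form an $\R$-basis of~$Wz$: since $W=\bigoplus_{i=1}^{3}(v_1\wedge v_{i+1})\K$ is free over~$\K$ (see~\ref{def:Kmodule}) we get $Wz=\bigoplus_i(v_1\wedge v_{i+1})(\K z)$, and $\{z,qz\}$ is an $\R$-basis of the $2$-dimensional $\R$-space~$\K z$ --- note $z\,\gal z=(1+\bj)(1-\bj)=1-\bj^2=0$, so $z$ is a zero divisor and $\K z$ is a proper left ideal of $\K\cong\R^{2\times2}$. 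To evaluate~$g^o$ I would use that, by~\ref{restrictForm} with $\ell=2$ (so $(-1)^\ell=1$ and $a_2=1$) together with~\ref{def:g}, $g^o(Yz,Y'z)=r_2\bigl(g(Y,Y')\bigr)$, which for wedges $Y,Y'$ that both contain the factor~$v_1$ reduces to $\Ext[2]{h}(Y,Y')+\gal{\Ext[2]{h}(Y,Y')}$, because then $\pf(Y,Y')=0$ (cf.\ the proof of~\ref{gDiagonal}). All our $Y$'s share the factor~$v_1$, and $\Ext[2]{h}(v_1\wedge v_i,v_1\wedge v_j)=\delta_{ij}h(v_i,v_i)$ by~\ref{computeHodge}; so, using the $\sigma$-semilinearity of~$\Ext[2]{h}$, $q+\gal q=0$, and $\gal qq=-q^2$, a short calculation gives the Gram matrix of~$g^o$ in the basis $X_1,X_2,X_3,X_1',X_2',X_3'$ as the diagonal matrix with entries
\[
  2h(v_2,v_2),\ 2h(v_3,v_3),\ 2h(v_4,v_4),\ -2q^2h(v_2,v_2),\ -2q^2h(v_3,v_3),\ -2q^2h(v_4,v_4).
\]

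On the other side, $\C$ is the $q^2$-double of the quaternion algebra $\H=\bigoplus_i v_i\R$, so a direct computation (using $\gal q=-q$) gives $N(x+yq)=N_\H(x)-q^2N_\H(y)$ for $x,y\in\H$; hence $\F^\perp=\Pu{\H}\oplus(\Pu{\H})q$, the $f_N$-pairing of~$\Pu{\H}$ with~$(\Pu{\H})q$ vanishes, and --- since $f_N|_{\H}=f_{N_\H}$ with $f_{N_\H}(v_i,v_j)=2\delta_{ij}h(v_i,v_i)$ --- the Gram matrix of $f_N|_{\F^\perp}$ in the basis $v_2,v_3,v_4,v_2q,v_3q,v_4q$ is precisely the one just displayed. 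Therefore $X_i\mapsto v_{i+1}$, $X_i'\mapsto v_{i+1}q$ is an isometry $(Wz,g^o)\to(\F^\perp,f_N|_{\F^\perp})$; in particular~$g^o$ is non-degenerate (no listed entry vanishes, since~$h$ is non-degenerate and $\Char\F\ne2$), and since $\ell=2$ is even the homomorphism of~\ref{restrictForm} takes values in $\OV{Wz,g^o}$, so transporting it along the isometry gives $\eta^o_2\colon\SUV{V,h}\to\OV{\F^\perp,N|_{\F^\perp}}$, as asserted. I expect the only real difficulty to be the bookkeeping --- scalars acting on the right, the rule $\bj x=\gal x\bj$, the companion~$\sigma$ of~$J$, and the normalization making $\delta=1$ --- rather than anything conceptual; once the conventions are fixed, the computation is as short as the one in~\ref{hQuaternionNorm}.
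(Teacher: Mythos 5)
Your proposal is correct and follows essentially the same route as the paper's proof: the paper likewise evaluates $g^o$ on the orthogonal basis $(v_1\wedge v_{k})z$, $(v_1q\wedge v_{k})z$ ($k=2,3,4$) of $Wz$ --- which coincides with your basis, since $\bigl((v_1\wedge v_k)q\bigr)z=(v_1q\wedge v_k)z$ --- and obtains exactly the diagonal entries $2c_k$, $-2q^2c_k$ matching the norms of $v_k$, $v_kq$ in $\F^\perp\subset\C$. You merely spell out a few steps the paper leaves as ``straightforward computation'' (why the Pfaffian terms vanish, the doubling formula $N(x+yq)=N_\H(x)-q^2N_\H(y)$), and you are if anything slightly more careful than the paper about arranging the normalization $\delta=1$ needed for \ref{restrictForm}.
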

\begin{proof}
  Let $c_k:=h(v_k,v_k)$. We normalize $b$ such that $b(v_1\wedge
  v_2\wedge v_3\wedge v_4)=1$. %
  Straightforward computation yields the values of $g^o$ on the basis
  \[
  \begin{array}{lll}
  X_1:=(v_1\wedge v_2)z    , &
  X_2:=(v_1\wedge v_3)z    , &
  X_3:=(v_1\wedge v_4)z    , 
  \\ 
  X_4:=(v_1q \wedge v_2)z, &
  X_5:=(v_1q \wedge v_3)z, &
  X_6:=(v_1q \wedge v_4)z;
\end{array}
\]
  it turns out that this is an orthogonal basis with
\[
\setlength{\arraycolsep}{.1em}
\begin{array}[b]{rcrcrcrcrcr}
  g^o(X_1,X_1) & = & 2 c_2,  & \phantom{xx}&
  g^o(X_2,X_2) & = & 2 c_3,  & \phantom{xx}&
  g^o(X_3,X_3) & = & 2 c_4, \\
  g^o(X_4,X_4) & = & -2q^2c_2,  & \phantom{xx}&
  g^o(X_5,X_5) & = & -2q^2c_3,  & \phantom{xx}&
  g^o(X_6,X_6) & = & -2q^2c_4. 
\end{array}
\]
So~$g^o$ is equivalent to the polarization of $N|_{\F^\perp}$.
\end{proof}

\begin{rema}\label{rem:notSurjective}
  Depending on the form~$h$ (and not only on the ground field~$\F$),
  the image $\eta_2^o(\SUV{V,h})$ may coincide with
  $\SOV{Wp,g^o} \cong \SOV{\F^\perp,f_{N|_{\F^\perp}}}$, or may form a
  \emph{proper} subgroup. See~\ref{SU4ContoSO6R},
  \ref{SU4C2ontoSO6R2}, and~\ref{SU4finiteontoOminus6} below for
  examples.

  As $\SOV{Wp,g^o}$ is generated by its half-turns
  (cf.~\ref{generateSO} below) this means that in some cases elements
  of $\SUV{V,h}$ will induce all these half-turns, while in some other
  cases they will not.
\end{rema}

\begin{ndef}[Example: Compact groups]\label{SU4ContoSO6R}%
  On $\CC^4$ the standard (positive definite) hermitian form~$h_0$
  defines the compact real form $\SU[]{4}{\CC}$ of the simply
  connected complex Lie group of type~$\Type{A}{3}$. %
  The discriminant of~$h_0$ is~$1$, the quadratic form $h_0(v,v)$ is
  the norm of \emph{the} quaternion field~$\HH$ (over the reals),
  and~$\K$ splits. %
  The octonion algebra in~\ref{octonion} is the octonion field~$\OO$,
  and the form $N|_{\CC^\perp}$ is a positive definite quadratic form
  on~$\RR^6$ defining the compact form $\SO{6}{\RR}$ of the complex
  Lie group of type~$\Type{D}{3}$. Of course, types~$\Type{A}{3}$ and
  $\Type{D}{3}$ are identical because the Coxeter diagrams are the
  same. The homomorphism~$\eta^o_2$ induces a two-sheeted covering
  from $\SU[]{4}{\CC}$ onto $\SO{6}{\RR}$.
\end{ndef}

\goodbreak%
\begin{ndef}[Example: Non-compact groups]\label{SU4C2ontoSO6R2}
  Let $h$ be a hermitian form of Witt index~$2$ on~$\CC^4$. Then
  $\disc{h}=1$ and $\K$ is split. The quadratic form $N(v):=h(v,v)$ is
  the norm form of the split quaternion algebra $\RR^{2\times2}$, and
  the octonion algebra constructed in~\ref{octonion} is split, as
  well. The norm form of that octonion algebra has the (maximal
  possible) Witt index~$4$ but its restriction to $\CC^\perp$ only has
  index~$2$. In fact, a $3$-dimensional positive definite subspace in
  $\CC^\perp$ would sum up with $\CC$ to form a $5$-dimensional
  positive definite subspace in the split octonion algebra, which is
  impossible. Thus~$\eta^o_2$ yields a homomorphism from
  $\SU[]{4}{\CC,2}$ to $\SO{6}{\RR,2}$. %
  This homomorphism is not surjective; %
  in fact~$\SU[]{4}{\CC,2}$ and its image are connected but
  $\SO{6}{\RR,2}$ is disconnected (see~\cite{MR101269}
  or~\cite{MR1308913}; in fact, %
  the commutator group of $\SO{6}{\RR,2}$ coincides with the kernel of
  the spinor norm, and has index~$2$ in $\SO{6}{\RR,2}$ because there
  are two square classes; see~\cite[\S8, p.\,54]{MR0310083},
  \cite[Th.\,9.7, p.\,77]{MR1859189}). %
  The tangent object of that morphism of Lie groups is the isomorphism
  between the real forms of types~$\rType{A}{3}{\RR,2}$
  and~$\rType{D}{3}{\RR,2}$, respectively. %
\end{ndef}

\begin{ndef}[An octonion algebra in characteristic two]%
  \label{octonionCharTwo}
  We consider $\sigma\ne\id$ again, but assume $\Char\F=2$ now. %
  The quadratic form $N(v)\coloneqq h(v,v)$ is not degenerate; in
  fact, its polarization
  $f_N(v,w)=h(v,w)+h(w,v) = h(v,w)+\gal{h(v,w)}$ is not degenerate
  (cf.~\cite[4.3]{MR2942723}).

  Pick $u\in F$ with $u+\gal{u}=1$, choose an orthogonal basis
  $v_1,\dots,v_4$ in~$V$, and put $c_k\coloneqq N(v_k)$ for $1\le
  k\le4$. Passing to a suitable scalar multiple of~$h$, we may assume
  $c_1=1$. As~$h$ has discriminant~$1$, we may then further assume
  $c_4=c_2c_3$.

  Let~$\C_1$ be the $\R$-linear span of~$v_1$ and $v_1u+v_2$. Then the
  restriction $N|_{\C_1}$ is equivalent to the norm form of the
  two-dimensional composition algebra $\R[X]/(X^2-X+r)$ with
  $r\coloneqq\gal{u}u+c_2$. %
  The spaces %
  $\C_2\coloneqq v_2 \R + (v_1 c_2 + v_2 u)\R$, %
  $\C_3\coloneqq v_3 \R + (v_3 u + v_4 )\R$, and %
  $\C_4\coloneqq v_4 \R + (v_3 c_2 + v_4 u)\R$ %
  yield an orthogonal decomposition
  $V=\C_1\operp\C_2\operp\C_3\operp\C_4$. %
  For each $k\le4$, the restriction $N|_{\C_k}$ is equivalent to
  $c_kN|_{\C_1}$. In particular, the restriction $N|_{\C_1+\C_2}$ is
  equivalent to the norm of the quaternion algebra~$\H$ obtained as
  the $c_2$-double of $\R[X]/(X^2-X+r)$, and~$N$ itself is equivalent
  to the norm of the octonion algebra~$\C$ obtained as the
  $c_3$-double of~$\H$.
\end{ndef}

\begin{rema}
  Clearly, the restriction $N|_{v_1\F}$ of the norm in~$\C$ to the
  subspace~$v_1\F$ is isometric to the norm form $N_{\F|\R}$. Indeed,
  one may choose the multiplication (of~$\C$) on~$V$ in such a way
  that~$v_1\F$ forms a subalgebra isomorphic to~$\F$. %
  Note that the restriction $N|_{\C_1}$ need not be isometric to the
  norm form $N_{\F|\R}$. In fact, the form $N|_{\C_1}$ may be
  isotropic; for instance, this happens if $c_2=\gal{u}u$.
\end{rema}

\removelastskip%
\enlargethispage{5mm}%
We now use the quadratic form~$\pq$ that gives rise to the Klein
quadric, see~\ref{KleinQuadric}: %
Up to a scalar, we have $\pq(X)^2=\det{X}$ for each
$X\in\Ext[2]{\F^4}$, and this determines~$\pq$ (again, up to a scalar)
because square roots are unique (if existent) in characteristic~$2$.

\removelastskip%
\enlargethispage{5mm}%
\begin{theo}\label{splitSigmaNotIdCharTwo}
  Assume $\sigma\ne\id$, $\Char\F=2$ and that\/~$\K$ splits. %
  Let\/~$f_N$ denote the polarization of the norm~$N$ of the octonion
  algebra~$\C$ introduced in~\ref{octonionCharTwo}. %
  Then the restriction of the quadratic form~$\pq$ to~$Wz$ is similar
  to the restriction of the norm~$N$ to~${\F^\perp}$ in that octonion
  algebra.  In particular, the form~$g^o$ on~$Wz$ is a non-trivial
  scalar multiple of the restriction of\/~$f_N$ to~${\F^\perp}$. %

  Thus $\eta^o_2$ may be interpreted as a homomorphism
  $\eta^o_2\colon \SUV{V,h} \to \OV{\F^\perp,N|_{\F^\perp}}$. %
\end{theo}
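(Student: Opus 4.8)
The plan is to run the argument of Theorem~\ref{splitSigmaNotId} in characteristic~$2$, carrying the \emph{quadratic} form~$\pq$ throughout instead of its (now possibly too coarse) polarization. First I would fix the data of~\ref{octonionCharTwo}: an orthogonal basis $v_1,\dots,v_4$ of~$V$ with $c_k\coloneqq h(v_k,v_k)$ normalized so that $c_1=1$ and $c_4=c_2c_3$, an element $u\in\F$ with $u+\gal u=1$ (so $\{1,u\}$ is an $\R$-basis of~$\F$ and $N_{\F|\R}(a+bu)=a^2+ab+u\gal u\,b^2$ for $a,b\in\R$), and the isomorphism~$b$ normalized as in the Convention of Section~\ref{sec:SplitCases}, so that $\delta=1$ and $z=1+\bj$ is nilpotent. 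From~\ref{computeHodge} one reads off~$J$ on $\Ext[2]{V}$; since in characteristic~$2$ every sign and every value of~$b$ on a reordering of $v_1\wedge v_2\wedge v_3\wedge v_4$ equals~$1$, one gets $J(v_i\wedge v_j)=(v_k\wedge v_l)\,c_i c_j\,b_0^{-1}$ with $\{k,l\}$ complementary to $\{i,j\}$ and $b_0\coloneqq b(v_1\wedge v_2\wedge v_3\wedge v_4)$ a fixed scalar. Hence $Wz=\{X+J(X):X\in\Ext[2]{V}\}$ carries the $\R$-basis $X_k\coloneqq(v_1\wedge v_{k+1})z$ and $X_{3+k}\coloneqq\bigl((v_1\wedge v_{k+1})u\bigr)z$ for $k=1,2,3$; these are independent because $\id+J$ is injective on the span of the $v_1$-containing basic $2$-vectors and their $u$-multiples.

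Next I would evaluate~$\pq$ on this basis. By~\ref{KleinQuadric}, reading a $2$-vector as an alternating matrix with respect to $v_1,\dots,v_4$ we have $\pq(X)^2=s\det X$ for a fixed scalar~$s$; since the determinant of a $4\times4$ alternating matrix is the square of its Pfaffian and square roots are unique in characteristic~$2$, the form~$\pq$ is, up to~$s$, the Pfaffian $p_{12}p_{34}+p_{13}p_{24}+p_{14}p_{23}$ in Plücker coordinates. Substituting the (sparse) Plücker coordinates of $X_1,\dots,X_6$ — a computation entirely parallel to that in the proof of Theorem~\ref{splitSigmaNotId}, with~$u$ in place of~$q$ — one finds that $X_1,X_2,X_3$ are pairwise $\pf$-orthogonal, that $\pf(X_1,X_4)=c_2$ while $X_1$ is $\pf$-orthogonal to $X_2,X_3,X_5,X_6$, and in general that $\{X_1,X_4\}$, $\{X_2,X_5\}$, $\{X_3,X_6\}$ span mutually orthogonal nondegenerate planes on which~$\pq$ takes the values $c_2\,N_{\F|\R}$, $c_3\,N_{\F|\R}$ and $c_2c_3\,N_{\F|\R}$ respectively, an isometry of the $k$-th block onto $(v_{k+1}\F,N|_{v_{k+1}\F})$ being $aX_k+bX_{3+k}\mapsto v_{k+1}(a+bu)$. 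Thus, up to the scalar~$s$,
\[
  \pq|_{Wz}\ \cong\ c_2\,N_{\F|\R}\ \perp\ c_3\,N_{\F|\R}\ \perp\ c_2c_3\,N_{\F|\R}\,.
\]

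The identification with $N|_{\F^\perp}$ is now immediate. Since the subspaces $v_1\F,v_2\F,v_3\F,v_4\F$ are pairwise $h$-orthogonal they are pairwise orthogonal for the polarization~$f_N$ of the octonion norm; as $f_N$ is nondegenerate and $f_N|_{v_1\F}$ is already nondegenerate (one has $f_N(v_1,v_1u)=c_1(u+\gal u)=1$), we get $\F^\perp=(v_1\F)^\perp=v_2\F\oplus v_3\F\oplus v_4\F$ and $N\cong N|_{v_1\F}\perp N|_{\F^\perp}$. From $N(v_k s)=h(v_k s,v_k s)=c_k\,N_{\F|\R}(s)$ and $c_4=c_2c_3$ we obtain $N|_{\F^\perp}=c_2N_{\F|\R}\perp c_3N_{\F|\R}\perp c_2c_3N_{\F|\R}$, which is precisely the form just computed for $\pq|_{Wz}$, up to the scalar~$s$; this proves the first assertion. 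For the claim about~$g^o$, a short computation combining~\ref{HodgeSemiSimilitude} (with $\ell=2$, $\delta=1$) with the definitions of~$g$ in~\ref{def:g} and of~$r_2$ in~\ref{restrictForm} gives $\pf(Xz,Yz)=g^o(Xz,Yz)$, i.e.\ $g^o$ is exactly the polarization of $\pq|_{Wz}$; hence $g^o$ is the same scalar multiple of $f_N|_{\F^\perp}$, and it is non-trivial because $f_N$ is nondegenerate. Finally, $\SUV{V,h}$ acts on $\Ext[2]{V}$ through $\SL{V}$, and the induced action on alternating matrices is $M\mapsto GMG^\transp$, which fixes the determinant, hence~$\pq$, because $\det G=1$; as these maps commute with~$J$ (see~\ref{semiSimilitudesAct}) they preserve~$Wz$, hence $\pq|_{Wz}$, so transporting along the similarity above yields the stated homomorphism $\eta^o_2\colon\SUV{V,h}\to\OV{\F^\perp,N|_{\F^\perp}}$.

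The work here is bookkeeping rather than new ideas. The main point to watch is reconciling the normalization $\delta=1$ (needed for~$z$ to be nilpotent and for~\ref{HodgeSemiSimilitude}) with the scalar~$s$ in $\pq(X)^2=s\det X$ and the scalar hidden in~$b$, and checking that they all disappear into the word ``similar'' in the statement — which happens because rescaling~$b$ multiplies~$J$, and hence shifts each~$X_i$, by a factor whose effect on~$\pq$ is exactly compensated. The characteristic-$2$ sign bookkeeping itself is trivial, every sign being~$1$.
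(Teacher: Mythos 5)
Your proposal is correct and follows essentially the same route as the paper: the same $\R$-basis $(v_1\wedge v_k)z$, $(v_1u\wedge v_k)z$ of $Wz$, the same evaluation of $\pq$ and of the cross terms $f_{\pq}$ giving the orthogonal decomposition $c_2N_{\F|\R}\perp c_3N_{\F|\R}\perp c_2c_3N_{\F|\R}\cong N|_{\F^\perp}$, and the same identification $g^o(Xz,Yz)=f_{\pq}(Xz,Yz)$ via~\ref{restrictForm}. Your closing observation that $\SUV{V,h}$ preserves the quadratic form $\pq$ itself (not merely its polarization $g^o$), which is what the target $\OV{\F^\perp,N|_{\F^\perp}}$ really requires in characteristic two, is a worthwhile point that the paper leaves implicit.
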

\begin{proof}
  Notation and normalizations are as in~\ref{octonionCharTwo}. %
  For~$Wz$, we use the $\R$-basis
  \[
  ({v_1\wedge v_2})z, %
  \quad%
  ({v_1\wedge v_3})z, %
  \quad%
  ({v_1\wedge v_4})z, 
  \quad%
  ({v_1u\wedge v_2})z, %
  \quad%
  ({v_1u\wedge v_3})z, %
  \quad%
  ({v_1u\wedge v_4})z %
  , %
  \] %
  and claim that $\R$-linear extension of %
  $\psi((v_1\wedge v_k)z) \coloneqq v_k$ and %
  $\psi((v_1u\wedge v_k)z) \coloneqq v_ku$ %
  (for $k\in\{2,3,4\}$, respectively) gives an isometry
  from~$\pq|_{Wz}$ to~$N|_{\F^\perp}$.

  The values of~$\pq$ are %
  $\pq((v_1 \wedge v_2)z) = c_2 $, %
  $\pq((v_1 \wedge v_3)z) = c_3 $, %
  $\pq((v_1 \wedge v_4)z) = c_2c_3 $, %
  $\pq((v_1u\wedge v_2)z) = \gal{u}uc_2 $, %
  $\pq((v_1u\wedge v_3)z) = \gal{u}uc_3 $, %
  and %
  $\pq((v_1u\wedge v_4)z) = \gal{u}uc_2c_3 $. %
  Computing $f_{\pq}(Xz,Yz) = \pq(Xz+Yz)-\pq(Xz)-\pq(Yz)$ for $Xz,Yz$
  in the basis we see that $\psi$ is an isometry, as claimed.

  Using~\ref{restrictForm} we compute $g^o(Xz,Yz) =
  \Ext[2]{h}(X,Y)+\gal{\Ext[2]{h}(X,Y)} = f_{\pq}(Xz,Yz)$ for
  arbitrary members $X,Y$ in the basis.  %
\end{proof}

\begin{ndef}[Example: Finite groups]\label{SU4finiteontoOminus6}
  Let~$\F$ be a finite field of square order~$e^2$. Then
  $\F\cong\FF_{e^2}$, there is a unique involution (namely,
  $\sigma\colon x\mapsto x^e$) in $\Aut{\F}$, %
  and the algebra~$\K_2$ is split (as is every finite quaternion
  algebra).

  The (essentially unique) non-degenerate $\sigma$-hermitian form %
  $h\colon\F^4\times\F^4\to\F$ has Witt index~$2$, and the special
  unitary group $\SUV{4,e^2} \coloneqq \SUV{\F^4,h}$ has order %
  $\left|\SUV{4,e^2}\right| = e^{6}\, (e^2-1)(e^3+1)(e^4-1)$.

  The norm form on the (necessarily split) octonion algebra~$\C$
  over~$\R$ has maximal Witt index. Constructing that algebra by
  repeated doubling, starting with the separable extension~$\F|\R$, we
  find that the norm form on $\F^\perp\le\C$ is a quadratic form of
  Witt index~$2$. The corresponding orthogonal group is usually
  denoted by $\OmV{6,e}$; it has order $|\OmV{6,e}| =
  2\,e^{6}\,(e^{2}-1)(e^{3}+1)(e^{4}-1)$, cf.~\cite[9.11,
  14.48]{MR1859189}.

  As $\SUV{4,e^2}$ is a perfect group (cf.~\cite[11.22]{MR1859189}),
  the image $\eta^o_2(\SUV{4,e^2})$ is contained in the commutator
  group $\OpmV{6,e}$ of $\OmV{6,e}$. The order of $\OpmV{6,e}$ is
  $\frac14|\OmV{6,e}|$ if~$e$ is odd, and it is
  $\frac12|\OmV{6,e}|$ %
  if~$e$ is even, cf.~\cite[6.28, 9.7, 9.11, 14.49]{MR1859189}. %
  In any case, the
  map~$\eta^o_2$ found in~\ref{splitSigmaNotId}
  and~\ref{splitSigmaNotIdCharTwo} is a homomorphism $\eta^o_2\colon
  \SUV{4,e^2} \to \OpmV{6,e}$.

  \begin{enumerate}
  \item%
    If~$e$ is odd then the kernel of~$\eta^o_2$ has order~$2$,
    see~\ref{kernelEta}. %
    So~$\eta^o_2$ yields an isomorphism from
    $\SUV{4,e^2}/\langle-\id\rangle$ onto $\OpmV{6,e}$.
  \item%
    If~$e$ is even then~$\eta^o_2$ is injective by~\ref{kernelEta},
    and~$\eta^o_2$ yields an isomorphism from $\SUV{4,e^2}$ onto
    $\OpmV{6,e}$. %
  \end{enumerate}
  See also Taylor~\cite[p.\,198]{MR1189139} for a proof of
  $\SUV{4,e^2} \cong \OpmV{6,e}$ for even~$e$. %
\end{ndef}

\section{Geometry}

We continue to assume $\ell=2$ and $n=2\ell = 4$. After
Section~\ref{sec:rangeSplit} it remains to consider the case where
$\delta=\delta_2$ is not a norm; so $\K=\K_2$ is a division algebra. %

The Klein quadric
$\smallset{X\F}{X\in\Ext[2]{V}\smallsetminus\{0\},\,\pq(X)=0}$
provides a model for the space of lines in the projective
space~$\PG{V}$ (see~\cite[Sect.\,3]{MR2431124}
or~\cite[Ch.\,12]{MR1189139}); %
one maps $u\F\oplus v\F$ to $({u\wedge v})\F$.  Recall that points on
that quadric represent confluent lines if, and only if, they are
orthogonal with respect to~$\pf$, the polarization of~$\pq$. %
There are two types of maximal totally isotropic subspaces,
corresponding to line pencils (i.e., points) and line spaces of planes
in $\PG{V}$, respectively. Every semi-similitude of~$\pf$ thus induces
either a collineation or a duality of~$\PG{V}$.

\begin{lemm}\label{collUnique}
  Consider a vector space~$U$ of dimension $d\ge2$ over~$\F$, and
  let\/ $f\colon U\times U\to\F$ be a non-degenerate diagonalizable
  $\sigma$-hermitian form.  Two collineations from~$\PG{U}$ onto some
  projective space~$\cP$ are equal if they agree on the
  complement\/~$\cR_U$ of the set\/ %
  $\smallset{v\F}{v\in U,f(v,v)=0}$.
\end{lemm}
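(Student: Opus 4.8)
The plan is to recover \emph{every} point of $\PG{U}$ from $\cR_U$ by incidence alone, so that two collineations agreeing on $\cR_U$ are forced to agree everywhere. The mechanism is this: a collineation $\phi$ sends a line $\langle a,b\rangle$ to the line $\langle\phi(a),\phi(b)\rangle$, so if a line $L$ of $\PG{U}$ carries two distinct points of $\cR_U$ then $\phi_1(L)=\phi_2(L)$; and if a point $p$ is the intersection of two \emph{distinct} such lines $L,L'$, then $\phi_1(p)$ and $\phi_2(p)$ both lie in $\phi_1(L)\cap\phi_1(L')=\phi_2(L)\cap\phi_2(L')$, which is a single point (two distinct lines meet in at most one point, and $\phi_1$, being bijective on points, is injective on lines), whence $\phi_1(p)=\phi_2(p)$. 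Points of $\cR_U$ are handled by hypothesis, so it suffices to exhibit, for each \emph{isotropic} point $p=v\F$, two distinct lines through $p$ each carrying two points of $\cR_U$.

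I would build these lines from an orthogonal basis $b_1,\dots,b_d$ of $U$ (which exists by diagonalizability; each $b_i\F$ lies in $\cR_U$ since $f$ is non-degenerate). Write $v=\sum_{i\in I}b_ix_i$ with $I=\{i:x_i\neq0\}$; a nonzero scalar multiple of a single $b_i$ is anisotropic, so $I$ is not a singleton, i.e. $|I|\geq2$. For any index $i_0$, the vectors $v$ and $b_{i_0}$ are linearly independent (else $v$ would be anisotropic or zero), so $M_{i_0}\coloneqq\langle v,b_{i_0}\rangle$ is a line through $p$, and I claim it carries two points of $\cR_U$. If $i_0\notin I$ then $b_{i_0}\perp v$ and $f(v+b_{i_0},v+b_{i_0})=f(b_{i_0},b_{i_0})\neq0$, so $b_{i_0}\F$ and $(v+b_{i_0})\F$ work. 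If $i_0\in I$ then $w\coloneqq v-b_{i_0}x_{i_0}=\sum_{i\in I\setminus\{i_0\}}b_ix_i$ is a nonzero vector of $M_{i_0}$ with $f(w,w)=-N_{\F|\R}(x_{i_0})\,f(b_{i_0},b_{i_0})\neq0$ (using $f(v,v)=0$), so $b_{i_0}\F$ and $w\F$ work. In both cases these two anisotropic points are distinct and differ from $v\F$.

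It then remains to pick two distinct lines among the $M_{i_0}$; here I assume $d\geq3$, the range in which the lemma is actually used ($d=4$), and handle $d=2$ separately. If $|I|<d$, choose $i_0\in I$ and $i_1\notin I$: from $b_{i_0}=v\alpha+b_{i_1}\beta$ one gets $x_{i_0}\alpha=1$ and $x_k\alpha=0$ for all $k\notin\{i_0,i_1\}$, hence $I\subseteq\{i_0\}$, contradicting $|I|\geq2$; so $M_{i_0}\neq M_{i_1}$. If $|I|=d$ then $|I|=d\geq3$, so there is a third index $k\in I$, and the same comparison applied to distinct $i_0,i_1\in I$ yields $x_k\alpha=0$ with $\alpha\neq0$, again a contradiction; so $M_{i_0}\neq M_{i_1}$. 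Either way $p=M_{i_0}\cap M_{i_1}$, finishing the case $d\geq3$. For $d=2$ there is only one line in $\PG{U}$, so this device is unavailable; instead one represents $\phi_1,\phi_2$ by the semilinear bijections inducing them and compares these on the anisotropic points $b_1\F$, $b_2\F$ and $(b_1+b_2t)\F$ (anisotropic for all but one norm-class of values of $t$), forcing the two semilinear maps to have the same companion automorphism and to differ by a scalar, hence $\phi_1=\phi_2$.

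The main obstacle is precisely the bookkeeping of the third paragraph: guaranteeing \emph{two genuinely distinct} reconstructing lines through an isotropic point (which is where $d\geq3$ enters essentially) and the separate treatment of $d=2$, where one must leave pure incidence geometry and use the semilinear description of the collineations. Characteristic~$2$ requires only mild care — one must avoid accidentally producing isotropic vectors — but the explicit choices $v+b_{i_0}$ and $w=v-b_{i_0}x_{i_0}$ above are made so that this does not occur, since their $f$-values are nonzero multiples of the basis norms $f(b_{i_0},b_{i_0})$.
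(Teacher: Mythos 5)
Your argument for $d\ge3$ is correct and is essentially the paper's: there, too, each isotropic point $v\F$ is recovered as the intersection of two of the lines joining $v\F$ to the points $b_i\F$ of an orthogonal basis, and a collineation is pinned down on such a line once it is known at two of its points in $\cR_U$. The difference lies in how the two anisotropic points on $M_{i_0}=\langle v,b_{i_0}\rangle$ are produced: the paper observes that for $|\F|>2$ any line meeting $\cR_U$ automatically carries at least two points of $\cR_U$, and then treats $\F=\FF_2$ by a separate argument (there $\PG{U}\smallsetminus\cR_U$ is a hyperplane, and a collineation is determined by its restriction to the complementary affine space). Your explicit choices $b_{i_0}\F$ and $(v+b_{i_0})\F$, resp.\ $w\F$ with $w=v-b_{i_0}x_{i_0}$, are valid over every field and in every characteristic, so they eliminate that case split — a small but genuine gain in uniformity; your verification that two of the lines $M_{i_0}$ really are distinct is also more careful than what the paper records.

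The $d=2$ case is a genuine gap, and your proposed patch cannot be made to work in general. First, a collineation of a projective \emph{line} is not covered by the fundamental theorem of projective geometry, so representing $\phi_1,\phi_2$ by semilinear bijections silently strengthens the hypothesis. Second, even for semilinearly induced collineations the assertion is false over small fields: take $\F=\FF_3$, $\sigma=\id$, and the hyperbolic form $f(x,y)=x_1y_1-x_2y_2$ on $U=\FF_3^2$. Then $\cR_U=\{b_1\F,\,b_2\F\}$, and the projectivity induced by $\left(\begin{smallmatrix}1&0\\0&-1\end{smallmatrix}\right)$ (an isometry, even) fixes both points of $\cR_U$ while interchanging the two isotropic points $(b_1\pm b_2)\F$. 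This is exactly where your sketch breaks: the points $(b_1+b_2t)\F$ with $t\ne0$ are \emph{all} isotropic here, so there is nothing left to compare the companion automorphisms on. (The hermitian form over $\FF_4$ gives a similar failure.) So the lemma should be read with $d\ge3$; that is the only case in which the paper actually invokes it (for the $3$-dimensional subspaces $v_k^\perp$), and the paper's own proof for $|\F|>2$ likewise tacitly assumes $d\ge3$ when it asserts the existence of ``two lines'' joining $v\F$ to basis points.
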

\begin{proof}
  Let $v_1,\dots,v_d$ be an orthogonal basis with respect to the
  form~$f$.

  Assume first that~$\F$ has more than two elements.  Each line
  meeting~$\cR_U$ then contains at least~$2$ points of~$\cR_U$. For
  any point $v\F \in \PG{U}\smallsetminus\cR_U$ there are at least two
  lines joining~$v\F$ with points in $\{v_1\F,\dots,v_d\F\}$. As each
  of these lines contains at least two points of~$\cR_U$, the images
  of~$v\F$ under the considered collineations are uniquely determined
  by the images of points in~$\cR_U$.

  It remains to study the case where~$\F=\FF_2$. Then $\sigma=\id$,
  and $f(x,y)=\sum_{k=1}^dx_ky_k$ for $x=\sum_{k=1}^dv_kx_k$ and
  $y=\sum_{k=1}^dv_ky_k$. Therefore, we have
  \[
  x\FF_2 \in\cR_U \iff
  \sum_{k=1}^dx_k^2 \ne0 \iff \sum_{k=1}^dx_k \ne0 \,.
  \]
  This means that
  $\PG{U}\smallsetminus\cR_U$ is a hyperplane in~$\PG{U}$, and the
  assertion of the lemma follows from the fact that projective
  collineations are determined by restrictions to affine spaces
  (obtained by removing hyperplanes).
\end{proof}

\begin{theo}\label{JinducesPerp}
  The Hodge operator induces the polarity $\pi_h\colon
  U\leftrightarrow U^{\perp_h}$ on~$\PG{V}$.
\end{theo}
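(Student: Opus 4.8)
The plan is to unwind the definition $J=\pf^{-1}\circ\Ext[2]{h^\vee}$ and to follow, factor by factor, what it does to a decomposable $2$-vector. The point is that both maps involved send decomposables to decomposables and admit an explicit description on planes, so the map induced by $J$ on the line set of $\PG V$ can be read off at one stroke — for \emph{every} line, including those lines $U$ with $h|_{U\times U}$ degenerate, where the formula of~\ref{computeHodge} does not apply. For a plane $P\subseteq V$ I write $P^\circ\coloneqq\set{f\in V^\vee}{f|_P=0}$ for its annihilator, a plane in $V^\vee$; recall that the Klein correspondence identifies a line $U=\langle u,v\rangle$ of $\PG V$ with the point $(u\wedge v)\F$ of the Klein quadric.

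First I would check two facts. (i)~For every plane $P\subseteq V$, the Pfaffian form $\pf$ maps the one-dimensional space $\Ext[2]P$ onto $\Ext[2]{P^\circ}$: picking a basis $v_1,\dots,v_4$ of $V$ with $P=\langle v_1,v_2\rangle$ and dual basis $v_1^\ast,\dots,v_4^\ast$, the defining formula of $\pf$ together with the formula for the natural pairing between $\Ext[2]{V^\vee}$ and $\Ext[2]V$ gives $\pf(v_1\wedge v_2)=b(v_1\wedge v_2\wedge v_3\wedge v_4)\,(v_3^\ast\wedge v_4^\ast)$, which spans $\Ext[2]{\langle v_3^\ast,v_4^\ast\rangle}=\Ext[2]{P^\circ}$. (ii)~For every plane $U\subseteq V$, $h^\vee(U)=(U^{\perp_h})^\circ$: both sides are $2$-dimensional (since $h^\vee$ is injective and $h$ non-degenerate), and $h(u,x)=0$ for $u\in U$, $x\in U^{\perp_h}$ gives $h^\vee(U)\subseteq(U^{\perp_h})^\circ$. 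Since $\Ext[2]{h^\vee}$ sends $\Ext[2]U$ onto $\Ext[2]{h^\vee(U)}$ by functoriality of $\Ext[2]$ on semilinear maps (\ref{functoriality}), (i) and (ii) yield
\[
  J\bigl(\Ext[2]U\bigr)
  =\pf^{-1}\bigl(\Ext[2]{h^\vee}(\Ext[2]U)\bigr)
  =\pf^{-1}\bigl(\Ext[2]{(U^{\perp_h})^\circ}\bigr)
  =\Ext[2]{U^{\perp_h}}
\]
for every plane $U\subseteq V$. As $\Ext[2]{U^{\perp_h}}$ is one-dimensional and $J$ is bijective, this says that for $0\ne u\wedge v$ with $U=\langle u,v\rangle$ the image $J(u\wedge v)$ is again decomposable and spans $\Ext[2]{U^{\perp_h}}$. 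Hence $X\F\mapsto J(X)\F$ is a bijection of the Klein quadric carrying the point representing the line $U$ to the point representing the line $U^{\perp_h}$.

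It then remains to transfer this to $\PG V$. Through the Klein correspondence, $J$ induces on the set of lines of $\PG V$ exactly the map $U\mapsto U^{\perp_h}$, which is the restriction to lines of the polarity $\pi_h$. Since a polarity of a three-dimensional projective space is determined by its action on lines — a point is the join of the images of two lines through it, a plane the meet of the images of two lines in it — the Hodge operator induces $\pi_h$, as claimed. (If one wishes to see directly that the induced map reverses incidence rather than invoking this: choosing an orthogonal basis $v_1,\dots,v_4$, formula~\ref{computeHodge} shows that $J$ carries $v_1\wedge V$, the pencil of lines through $v_1\F$, onto $\Ext[2]{\langle v_2,v_3,v_4\rangle}$, the ruling of the plane $v_1^{\perp_h}$; alternatively one may compare with $\pi_h$ via~\ref{collUnique}.)

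I expect the only real work to be the bookkeeping behind (i) and (ii), in particular keeping the identification $\Ext[2]{V^\vee}\cong(\Ext[2]V)^\vee$ and the two annihilators straight. It is worth stressing that no hypothesis on $h|_{U\times U}$ is used, so the degenerate planes cause no trouble, and that — unlike an argument based on the polar form $\pf$ alone, which in characteristic two no longer cuts out the Klein quadric — this argument is uniform in the characteristic.
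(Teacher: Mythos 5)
Your proof is correct, and it takes a genuinely different route from the paper's. The paper first argues abstractly that $J$, being a semi-similitude of $\pf$ (by~\ref{HodgeSemiSimilitude}), induces a collineation or duality, shows it is a polarity by exhibiting two complementary maximal isotropic subspaces that get swapped, and then identifies it with $\pi_h$ by computing $J(v_k\wedge v_m)$ for \emph{orthogonal} pairs only (the formula of~\ref{computeHodge} is valid only there); the remaining lines are reached via the uniqueness lemma~\ref{collUnique}, which forces a separate and somewhat delicate case analysis for $\Char\F=2$, $\sigma=\id$. You instead unwind $J=\pf^{-1}\circ\Ext[2]{h}$ on the whole line $\Ext[2]{U}$ at once: your facts (i) and (ii) are both correct as stated (the coefficient $b(v_1\wedge v_2\wedge v_3\wedge v_4)$ in (i) checks out against the determinant pairing, and (ii) is the usual dimension count), and composing them gives $J(\Ext[2]{U})=\Ext[2]{U^{\perp_h}}$ for \emph{every} plane $U$, degenerate or not, in every characteristic. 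This eliminates both~\ref{collUnique} and the characteristic-two case distinction, and it shows directly that $J$ preserves the set of decomposable $2$-vectors without appealing to $\pf$ cutting out the quadric (which, as the paper notes in~\ref{KleinQuadric}, it does not in characteristic two). The one point worth making explicit in a final write-up is the passage from the line map to the polarity: to say that $J$ ``induces'' a map on all of $\PG{V}$ one should either invoke, as the paper does, that a semi-similitude of $\pf$ induces a collineation or duality and then observe that your line map $U\mapsto U^{\perp_h}$ swaps stars and rulings (so the induced map is a duality agreeing with $\pi_h$ on lines, hence equal to it), or carry out the incidence-reversal check you sketch in your parenthesis. Either completion is routine; your computation is the substance of the proof.
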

\begin{proof}
  From~\ref{HodgeSemiSimilitude} we know that $J$ is a semi-similitude
  of~$\pf$. Like any semi-similitude of the Pfaffian form~$\pf$, the
  map $J$ induces either a collineation or a duality of the projective
  space~$\PG{V}$. %
  Let $v_1,v_2,v_3,v_4$ be an orthogonal basis with respect to~$h$. %
  The $\F$-subspaces $M$ and $J(M)$ spanned by
  $\{{v_1\wedge v_2},\, {v_1\wedge v_3},\, {v_1\wedge v_4}\}$ and by
  $\{{v_3\wedge v_4},\, {v_2\wedge v_4},\, {v_2\wedge v_3}\}$,
  respectively, are complementary maximally $\pq$-isotropic subspaces
  of~$\Ext[2]{V}$. One of these corresponds to a point, the other to a
  plane in~$\PG{V}$. (Cf.\ also~\cite[7.2]{MR0470099}.) %
  Since~$J$ interchanges the two it induces a duality (and not a
  collineation), which is a polarity because $J^2\in\F\,\id$. %

  From~\ref{computeHodge} we recall that~$J(v_k\wedge v_m)$
  corresponds to the line $\{v_k,v_m\}^\perp = \pi_h(v_k\F+v_m\F)$.

  We will apply~\ref{collUnique} now, with restrictions of~$h$ to
  various subspaces~$U$ playing the role of~$f$. %
  Assume first that either $\Char\F\ne2$ or~$\sigma\ne\id$. %
  Put $U=v_k^\perp$ for some $k\le4$.  For each point
  $v\F \in \PG{U}\smallsetminus\cR_U$, we then know that it is possible to
  extend the orthogonal system $v_k,v_m$ to an orthogonal basis
  containing~$v$ and see that~$J$ and~$\pi_h$ agree on the line
  corresponding to $(v_k\wedge v)\F$. From~\ref{collUnique} we now
  infer that~$J$ and~$\pi_h$ agree on the points of the projective
  plane~$\PG{U}=\PG{v_k^\perp}$. As this is true for each~$k$, the
  assertion of the theorem follows.

  Now consider the case where $\Char\F=2$ and $\sigma=\id$. %
  Fix $k,m\le4$ and choose $k',m'$ such that
  $\{k,m,k',m'\}=\{1,2,3,4\}$. For any $x = v_kx_k+v_mx_m
  \in\{v_{k'},v_{m'}\}^\perp$ we find that $y\coloneqq
  v_kx_mh(v_m,v_m)-v_mx_kh(v_k,v_k)$ satisfies $h(x,y)=0$ and $h(y,y) =
  h(v_k,v_k)h(v_m,v_m)h(x,x) \in h(x,x)\F^\myTimes$. So
  $v_{k'},x,y,v_{m'}$ forms an orthogonal basis whenever
  $h(x,x)\ne0$. This means that~$J$ maps the line $v_{k'}\F+x\F$ to
  the line $y\F+v_{m'}\F = \pi_h(v_{k'}\F+x\F)$, and~$J$ coincides
  with~$\pi_h$ on $\PG{v_k\F+v_m\F}\smallsetminus\cR_{v_k\F+v_m\F}$.

  The solutions $x\in v_k\F+v_m\F$ for $h(x,x)=0$ form a subspace of
  dimension at most one (here we use $\Char\F=2$ and $\sigma=\id$, and
  also that the restriction of~$h$ to the line is
  diagonalizable). Thus $\PG{v_k\F+v_m\F}\cap\cR_{v_k\F+v_m\F}$
  contains at most one point, and~$J$ coincides with~$\pi_h$ on
  $\PG{v_k\F+v_m\F}$.  Now the two polarities coincide on each point
  row of the six lines of form $v_k\F+v_m\F$, and the assertion of the
  theorem follows also in this case. %
\end{proof}

The \emph{absolute points} of the polarity~$\pi_h$ are the
one-dimensional subspaces of~$V$ that are isotropic with respect
to~$h$. We write~$\Abs{h}$ for the set of absolute points.
Analogously, we write $\Abs{g}$ for the set of absolute points of the
polarity $\pi_g\colon C\mapsto X^{\perp_g}$ of the projective
plane~$\PG[\K]{\Ext[2]V}$.

\begin{prop}\label{lambdaFibres}
  The map %
  $\lambda\colon\Gr[\F]{2}{V}\to\Gr[\K]{1}{\Ext[2]{V}}\colon %
  u\F\oplus v\F \mapsto (u\wedge v)\K$ is well defined, and we have
  the following.
  \begin{enumerate}
  \item If the restriction of~$h$ to $L\in\Gr[\F]{2}{V}$ is
    non-degenerate then $\lambda(L)$ has precisely two
    preimages under~$\lambda$, namely, $L$ and $L^{\perp_h}$. %
    In this case, the Pfaffian~$\pf$ has non-degenerate restriction
    to~$\lambda(L)$, and the $\K$-space $\lambda(L)$ is not isotropic
    with respect to~$g$. 
  \item If the restriction of~$h$ to $L\in\Gr[\F]{2}{V}$ is degenerate
    then $\lambda(L)$ has more than two preimages. %
    In fact, the preimages of\/~$L$ are the elements of the pencil of
    tangents $\Gr[\F]{2}{A^{\perp_h}}$ to the absolute
    point~$A\coloneqq L\cap L^{\perp_h} \in \Abs{h}$; %
    that set is in bijection with the projective line over~$\F$. %
    In this case, the $\K$-space $\lambda(L)$ is isotropic with
    respect to~$g$; i.e., it belongs to the set $\Abs{g}$ of absolute
    points of the polarity induced by~$g$ on the projective space
    over~$\K$. %
  \item For each $Z\in\Ext[2]V$ with $g(Z,Z)=0$ there exist $z,w\in V$
    such that $Z=z\wedge w$ and $h(z,z) = 0 = h(z,w)$. %
    In particular, we have
    $\Abs{g}\subset \lambda\left(\Gr[\F]{2}{V}\right)$.
  \end{enumerate}
\end{prop}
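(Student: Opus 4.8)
The plan is to deduce everything from Theorem~\ref{JinducesPerp}, which says that the Hodge operator~$J$ induces the polarity $\pi_h\colon U\leftrightarrow U^{\perp_h}$ of~$\PG{V}$. First, $\lambda$ is well defined because changing the basis $u,v$ of $L$ multiplies $u\wedge v$ by a scalar in $\F^\myTimes\subseteq\K^\myTimes$. Next, since~$J$ is right multiplication by~$\bj$ on $\Ext[2]V$ (see~\ref{def:Kmodule}), Theorem~\ref{JinducesPerp} gives $J\bigl(\Ext[2]L\bigr)=\Ext[2]{L^{\perp_h}}$, so
\[
\lambda(L)=(u\wedge v)\K=\Ext[2]L\oplus\Ext[2]{L^{\perp_h}}
\]
as an $\F$-subspace of $\Ext[2]V$; this is a single $\K$-line provided $L\ne L^{\perp_h}$, which always holds here, since $L=L^{\perp_h}$ would make~$L$ a totally isotropic plane, forcing Witt index~$2$ and hence $\disc h=1$ by~\ref{wittIndexTwoIsSplit}, contrary to~$\K$ being a division algebra. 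This description of $\lambda(L)$ as the span of the two ``complementary'' decomposable lines is the workhorse for all three parts.

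For~(a) I would fix an orthogonal basis $v_1,\dots,v_4$ with $L=v_1\F\oplus v_2\F$, so that $\lambda(L)=(v_1\wedge v_2)\F\oplus(v_3\wedge v_4)\F$ by~\ref{computeHodge}. Any preimage $L'$ of $\lambda(L)$ contributes a decomposable vector $a(v_1\wedge v_2)+b(v_3\wedge v_4)$ there; as $v_1\wedge v_2$ and $v_3\wedge v_4$ are $\pq$-singular and pair non-trivially under~$\pf$, evaluating~$\pq$ on it forces $ab=0$, so $L'\in\{L,L^{\perp_h}\}$, and these are distinct since $L\cap L^{\perp_h}=0$. The Gram matrix of~$\pf$ on $\lambda(L)$ is then visibly hyperbolic, hence non-degenerate; and identifying $\lambda(L)$ with~$\K$ via $v_1\wedge v_2\mapsto1$, the form~$g$ becomes $c\mapsto\alpha(c)\,d\,c$ with $d=h(v_1,v_1)h(v_2,v_2)\in\R^\myTimes$ (from~\ref{def:g}), which is nonzero for $c\ne0$ precisely because~$\K$ has no zero divisors, so $\lambda(L)$ is $g$-anisotropic. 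For~(b), the Witt index being at most~$1$ the radical $A\coloneqq L\cap L^{\perp_h}$ of $h|_L$ is one-dimensional, $A=a\F$ with $h(a,a)=0$, so $A\in\Abs h$; one checks $L+L^{\perp_h}=A^{\perp_h}$, a three-space. The displayed identity then says that $\lambda(L)$ consists of the vectors $a\wedge w$ with $w\in A^{\perp_h}$, so the preimages of $\lambda(L)$ are exactly the planes $L''$ with $A\subseteq L''\subseteq A^{\perp_h}$ --- the pencil of tangent planes at~$A$, in bijection with $\PG[\F]{A^{\perp_h}/A}$, hence more than two. Finally $g(a\wedge b,a\wedge b)=h(a,a)h(b,b)-h(a,b)h(b,a)=0$ (the $\bj$-part $\pf(a\wedge b,a\wedge b)$ vanishes on a decomposable vector, see~\ref{def:Pfaffian}), so $\lambda(L)\in\Abs g$.

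For~(c) I would start from $g(Z,Z)=0$, which by~\ref{gAnisotropicIfhIs} forces $\Ext[2]h(Z,Z)=0=\pf(Z,Z)$, hence $\pq(Z)=0$, so~$Z$ is decomposable, say $Z=z_0\wedge w_0$; put $L=z_0\F\oplus w_0\F$. By part~(a), $h|_L$ cannot be non-degenerate (otherwise $\lambda(L)=Z\K$ would be $g$-anisotropic), so by part~(b) the radical $A=a\F$ of $h|_L$ is isotropic; writing $L=a\F\oplus w\F$ and $Z=z_0\wedge w_0=\mu\,(a\wedge w)$ with $\mu\in\F^\myTimes$, and absorbing~$\mu$ into the first factor, gives $Z=z\wedge w$ with $h(z,z)=0=h(z,w)$. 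In particular $\Abs g\subseteq\lambda\bigl(\Gr[\F]{2}{V}\bigr)$, since any absolute point $Z\K$ then equals $\lambda(z\F\oplus w\F)$.

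The one place where characteristic matters --- and the main thing to be careful about --- is the step ``$g(Z,Z)=0\Rightarrow Z$ decomposable'' in~(c). The clean argument above uses $\pf(Z,Z)=0\Rightarrow\pq(Z)=0$, which requires $\Char\F\ne2$; in characteristic two~$\pf$ is alternating, so $\pf(Z,Z)$ vanishes identically and carries no information, and in fact~(c) fails there in general, the inseparable case being treated in~\cite{KramerStroppel-hodge-char2-arXiv}. Parts~(a) and~(b), by contrast, go through verbatim in all characteristics.
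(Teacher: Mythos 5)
Your argument is correct in substance and follows essentially the same route as the paper: both proofs come down to locating the points of the Klein quadric inside the two\-/dimensional $\F$-space $\lambda(L)$, and both detect degeneracy of $h|_L$ through the Gram determinant $\Ext[2]{h}(u\wedge v,u\wedge v)$. The organization differs slightly: you first invoke~\ref{JinducesPerp} to write $\lambda(L)=\Ext[2]{L}\oplus\Ext[2]{L^{\perp_h}}$ and then read off the fibres, whereas the paper computes the Gram matrix of~$\pf$ on the basis $u\wedge v$, $(u\wedge v)\bj$ directly from~\ref{HodgeSemiSimilitude} and leaves the description of the tangent pencil in case~(b) largely implicit; your explicit identification $\lambda(L)=a\wedge A^{\perp_h}$ is a useful addition. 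Your closing caveat on part~(c) is also well taken: the step ``$\pf(Z,Z)=0$ implies $Z$ decomposable'' does need $\Char\F\ne2$, and the paper's own proof uses it without comment.

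The one step of yours that can fail is the opening move of part~(a): fixing an orthogonal basis of~$V$ with $L=v_1\F\oplus v_2\F$ presupposes that $h|_L$ is diagonalizable. When $\Char\F=2$ and $\sigma=\id$, a non-degenerate plane may carry an alternating restriction even though~$h$ itself is diagonalizable --- take $\F=\FF_2(t)$, an orthogonal basis $e_1,\dots,e_4$ with $h(e_i,e_i)=1$ for $i\le3$ and $h(e_4,e_4)=t$ (so that $\K$ is a non-split inseparable extension), and $L=(e_1+e_2)\F\oplus(e_2+e_3)\F$ --- and then no such basis exists, so your assertion that part~(a) goes through verbatim in all characteristics is not justified as written. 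The statement itself remains true there, and the repair is exactly the paper's computation: replace your basis $v_1\wedge v_2$, $v_3\wedge v_4$ of $\lambda(L)$ by $u\wedge v$, $(u\wedge v)\bj$ and use $\pf\bigl(u\wedge v,(u\wedge v)\bj\bigr)=\Ext[2]{h}(u\wedge v,u\wedge v)\ne0$ from~\ref{HodgeSemiSimilitude}; your ``$ab=0$'' argument for the two singular points, and the anisotropy of~$g$ on $\lambda(L)$ via the absence of zero divisors in~$\K$, then go through unchanged. Part~(b) is unaffected, since there you work with the basis $a,w$ of~$L$ rather than an orthogonal one.
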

\begin{proof}
  Let $L=u\F\oplus v\F$.  If $L\cap L^{\perp_h}=\{0\}$ then $\pf$ is
  non-degenerate on $\lambda(L)$ because
  $\pf({u\wedge v},{u\wedge v}) = %
  \pf(({u\wedge v})\bj,({u\wedge v})\bj) %
  = 0 \ne \Ext[2]{h}({u\wedge v},{u\wedge v}) %
  = \pf({u\wedge v},({u\wedge v})\bj)$. %
  Thus $\lambda(L)$ meets the Klein quadric in two points; these are
  $(u\wedge v)\F$ and $(u\wedge v)\bj\F$. %
  If $L\cap L^{\perp_h}\ne\{0\}$ we may assume $h(u,v)=0=h(v,v)$.  For
  $X\in(ux_u+v\F)\wedge({uy_u+v\F})$ we obtain %
  $\Ext[2]{h}\left(X,X)\right) = \det\left(
    \begin{smallmatrix}
      x_u^2 & x_uy_u \\
      y_ux_u & y_u^2
    \end{smallmatrix}\right) = 0$. %
  Thus
  $g(\lambda(L)\times\lambda(L))=\{0\}$ follows from~\ref{isotropic}
  since $\pf(u\wedge v,u\wedge v)=0$.

  For the last assertion, we note that $g(Z,Z)=0$ means $0 =
  \pf(Z,Z)$ (whence $Z = u\wedge v$ for some $u,v\in V$) and then $0 =
  \Ext[2]h(Z,Z) = \Ext[2]h(u\wedge v,u\wedge v) = \det\left(
    \begin{smallmatrix}
      h(u,u) & h(u,v) \\
      h(v,u) & h(v,v)
    \end{smallmatrix}
  \right)$. %
  So the restriction of~$h$ to
  $u\F+v\F$ is degenerate, and we find $z,w$ as required.
\end{proof}

\section{The range of our homomorphism in the non-split case}
\label{sec:nonSplit}

We consider the case where $\ell=2$ and $\delta=\delta_2$ is not a
norm. Moreover, we assume that the form~$h$ is isotropic (see
Section~\ref{sec:anisotropic} for the anisotropic case).
From~\ref{wittIndexSplit} and~\ref{wittIndexTwoIsSplit} we know that
these assumptions are equivalent to the assumption  that the Witt index
is precisely~$1$. %
Again, our aim is to understand $\eta(\SUV{V,h})$ where $\eta=\eta_2$
is the homomorphism constructed in Section~\ref{sec:modulesQuat}.

\begin{ndef}[Notation]
  An Eichler (or Siegel) transformation
  (cf.~\cite[p.\,214\,f]{MR1007302}) of $(V,h)$ is given as
  \[
  \Sigma_{z,w,p} \colon V\to V\colon x \mapsto x + z\,h(w,x) -
  (w + z\,p)\,h(z,x)
  \]
  where $z,w\in V$ satisfy $h(z,z) = 0 = h(z,w)$ and
  $\sigma(p)+p = h(w,w)$.  The special case $w=0$ leads to %
  an isotropic transvection~$\Sigma_{z,0,p}$, with $\sigma(p)=-p$.

  Conversely, every transvection in $\UV{V,h}$ is of the
  form~$\Sigma_{z,0,p}$ with $h(z,z)=0$ and $\sigma(p)=-p$ (see, for
  instance, \cite[p.\,94]{MR1859189}). %
  Note that only the trivial isotropic transvection exists if
  $\sigma=\id$ and $\Char{\F}\ne2$. This is the reason why we also
  have to study the more general Eichler transformations.  If
  $\Char\F=2$ and $\sigma=\id$ then there do exist isotropic
  transvections. %

  By $\EOV{V,h}\le\OV{V,h}$ and $\EUV{V,h}\le\UV{V,h}$ we denote the
  subgroups generated by all Eichler transformations. Note that
  $\EUV{V,h}$ is generated by its isotropic transvections
  (see~\cite[6.3.1]{MR1007302}), except if $V=\FF_4^3$. %
  (However, that group is not of interest here because~$3$ is odd.)
\end{ndef}

\begin{lemm}\label{findTransvection}
  If $\sigma\ne\id$ and $\Char\F\ne2$ then the image $\eta(\SUV{V,h})$
  in $\UV{\Ext[2]V,g}$ contains each isotropic transvection.
\end{lemm}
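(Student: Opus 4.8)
The plan is to exhibit, for an arbitrary isotropic transvection $\Sigma_{z,0,p}$ of $(V,h)$, an explicit element of $\SUV{V,h}$ whose image under $\eta=\eta_2$ is that transvection. First I would recall that every isotropic transvection of $(V,h)$ has the form $\tau=\Sigma_{z,0,p}$ with $h(z,z)=0$ and $\sigma(p)=-p$, and that by~\ref{lambdaFibres} the isotropic line $z^{\perp_h}\cap(z\F)$ sits inside the tangent pencil at the absolute point $z\F$; more usefully, I would pick a hyperbolic pair: since $h$ has Witt index exactly~$1$ there is $w\in V$ with $h(z,w)=1$, and $\{z,w\}^{\perp_h}$ is an anisotropic plane on which I choose an orthogonal basis $v_3,v_4$. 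Then $z,w,v_3,v_4$ is a (non-orthogonal) basis adapted to~$\tau$.

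Next I would compute the action of $\eta(\tau)$ on $\Ext[2]V$ directly from functoriality: $\Ext[2]\tau$ sends $x\wedge y\mapsto \tau(x)\wedge\tau(y)$, and since $\tau$ fixes $z$, fixes $v_3,v_4$, and sends $w\mapsto w+zp$ (the $w=0$ case of the Eichler formula gives $\tau(x)=x-zp\,h(z,x)$, so $\tau(w)=w-zp$; I will fix the sign once the normalizations are pinned down), the map $\Ext[2]\tau$ fixes $z\wedge v_3$, $z\wedge v_4$, $z\wedge w$ and $v_3\wedge v_4$, while $w\wedge v_3\mapsto w\wedge v_3 \pm (z\wedge v_3)p$ and $w\wedge v_4\mapsto w\wedge v_4\pm(z\wedge v_4)p$. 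In the basis $B_1=\{z\wedge w,\ z\wedge v_3,\ z\wedge v_4\}$ of the free $\K$-module $\Ext[2]V$ (cf.~\ref{def:Kmodule}), $\eta(\tau)$ is therefore a $\K$-linear unipotent map of a very simple shape — it fixes $z\wedge w$ and adds $\K$-multiples of $z\wedge v_3, z\wedge v_4$ to themselves only through the $w\wedge v_k$ components, which must be re-expressed via $w\wedge v_k = \mathrm{(something)} + (z\wedge v_k)\bj\cdot c$. Carrying out this re-expression using $J(z\wedge v_k)$ from~\ref{computeHodge} and the relation $v\bj_\ell=J(v)$, I expect $\eta(\tau)$ to come out as an isotropic transvection of $(\Ext[2]V,g)$ with vector $z\wedge w$ (or $z\wedge v_3$, depending on normalization) — which is exactly what we want, once we check it lies in $\UV{\Ext[2]V,g}$, guaranteed since $\tau\in\SUV{V,h}$ and $\eta|_{\SUV{V,h}}$ lands in the unitary group by~\ref{homoSUtoSaU}.

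The genuinely substantive point — and the one I would present carefully rather than by a bare computation — is that \emph{every} isotropic transvection of $(\Ext[2]V,g)$ arises this way, i.e. the transvections I produce are not confined to a single $g$-isotropic line or a proper subset. Here I would argue geometrically: by~\ref{lambdaFibres}.c every $Z\in\Ext[2]V$ with $g(Z,Z)=0$ is of the form $Z=z\wedge w$ with $h(z,z)=0=h(z,w)$, so each $g$-isotropic point of $\PG[\K]{\Ext[2]V}$ is $\lambda(L)$ for a degenerate plane $L=z\F\oplus w\F$; extending $z$ to a hyperbolic pair as above and letting $p$ range over $\{p\in\F : \sigma(p)=-p\}$ produces (via the previous paragraph) a full set of $\K$-scalar multiples of the relevant rank-one skew-hermitian operator, which is precisely the one-parameter family of transvections along that isotropic point. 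Since $\sigma\ne\id$ and $\Char\F\ne2$, the space $\{p:\sigma(p)=-p\}$ is $1$-dimensional over $\R$ and nonzero, so the family is genuinely all of them. The main obstacle is bookkeeping: keeping the non-orthogonal adapted basis $z,w,v_3,v_4$ consistent with the orthogonal-basis formulae of~\ref{computeHodge} (which are only valid for orthogonal bases), so I would either first diagonalize on $\{z,w\}$ in the $\Char\F\ne2$ case via $v_1\coloneqq z+w$, $v_2\coloneqq z-w$ as in the proof of~\ref{isotropic}, compute $\eta(\tau)$ there, and then translate back, or compute $J$ on $z\wedge v_k$ indirectly from $\Ext[2]h$ and $\pf$ using~\ref{HodgeSemiSimilitude}. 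Either route reduces the claim to the two displayed facts about transvections plus~\ref{lambdaFibres}, with no Lie theory needed.
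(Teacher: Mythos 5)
Your strategy is essentially the paper's: apply $\eta$ to the isotropic transvections $\Sigma_{z,0,q}$ of $(V,h)$ (plentiful since $\sigma\ne\id$), check that the image is an isotropic transvection of $(\Ext[2]V,g)$, use~\ref{lambdaFibres} to see that every $g$-isotropic point is of the form $(z\wedge w)\K$ with $h(z,z)=0=h(z,w)$, and match the two one-dimensional $\R$-spaces of parameters --- the spot where $\Char\F\ne2$ is really used, since it forces $\alpha(p)+p=0$ to mean $p\in\F$ with $\sigma(p)=-p$ (compare~\ref{charTwoA3inC3}). The paper does the computation basis-free, via the $\F$-linear map $\lambda_z(x\wedge y)=\left(x\,h(z,y)-y\,h(z,x)\right)\wedge z$ whose range lies in $z^{\perp_h}\wedge z\subseteq Z\K$ and which kills $Z$; you choose an adapted basis instead. (Your opening sentence garbles the goal --- the transvections to be hit live in $\UV{\Ext[2]V,g}$, not in $(V,h)$ --- but the rest of the plan pursues the correct statement.)

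Two concrete points in your write-up are wrong and must be repaired. First, $B_1=\{z\wedge w,\,z\wedge v_3,\,z\wedge v_4\}$ is \emph{not} a $\K$-basis of $\Ext[2]V$: since $z$ is isotropic and $v_3,v_4\in z^{\perp_h}$, the planes $z\F+v_3\F$ and $z\F+v_4\F$ both lie in the tangent pencil at the absolute point $z\F$, so $(z\wedge v_3)\K=(z\wedge v_4)\K=z\wedge z^{\perp_h}$ by~\ref{lambdaFibres} (equivalently, $J(z\wedge v_3)\in(z\wedge v_4)\F$ by~\ref{JinducesPerp}); your three vectors span only a $\K$-subspace of rank~$2$. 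The recipe of~\ref{def:Kmodule} for producing a free basis requires an \emph{orthogonal} basis of $V$, and $z,w,v_3,v_4$ with $h(z,w)=1$ is not one. Second, the center of $\eta(\Sigma_{z,0,q})$ cannot be $z\wedge w$ for your hyperbolic partner $w$: one has $\Ext[2]{h}(z\wedge w,z\wedge w)=h(z,z)h(w,w)-N_{\F|\R}(h(z,w))=-1\ne0$, so that vector is not $g$-isotropic and supports no unitary transvection. The center is the isotropic $\K$-line $z\wedge z^{\perp_h}=(z\wedge v_3)\K$, which by the first remark is exactly (not merely contains) the image of $\eta(\Sigma_{z,0,q})-\id$, as a transvection requires. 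With these corrections --- and noting that nonzero $q$ gives a nonzero transvection, so the $\R$-linear correspondence $q\mapsto p'$ between the two one-dimensional parameter spaces is onto --- your argument closes up and agrees with the paper's.
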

\begin{proof}
  The transvections in $\UV{\Ext[2]V,g}$ are of the form
  \[
  \Sigma_{Z,0,p}\colon\Ext[2]{V}\to\Ext[2]{V}\colon X\mapsto
  X - Zp\,g(Z,X)
  \]
  where $Z\in\Ext[2]{V}$ and $p\in\K$ satisfy $g(Z,Z)=0=\alpha(p)+p$.
  Our assumption $\Char{\F}\ne2$ implies that $\alpha(p)+p=0$ is
  equivalent to $p\in\F$ and $\sigma(p)+p=0$.  %
  There exist $z,w\in V$ with $h(z,z)=0=h(z,w)$ such that $Z=z\wedge
  w$, cf.~\ref{lambdaFibres}.

  For fixed $z\in V$ the alternating bilinear map %
  $(x,y)\mapsto \left(x\,h(z,y)-y\,h(z,x)\right)\wedge z$ has its
  range in~$z^{\perp_h}\wedge z$.  If $h(z,z)=0$ then for each $w\in
  z^\perp\smallsetminus z\F$ the latter set is contained in %
  $(z\wedge w)\K=Z\K$, cf.~\ref{lambdaFibres}. %
  Thus we obtain an $\F$-linear map %
  \[
  \lambda_z\colon\Ext[2]{V}\to Z\K \colon
  x\wedge y \mapsto \left(x\,h(z,y)-y\,h(z,x)\right)\wedge z
  \,.
  \]
  For $q\in\F$ with $\sigma(q)=-q$, the image
  $\eta(\Sigma_{z,0,q}) \in \UV{\Ext[2]V,g}$ is the linear extension
  of %
  $({x\wedge y})\mapsto ({x\wedge y})+\lambda_z({x\wedge y})\,q$.
  This is a transvection because $h(z,z)=0$ yields
  $\lambda_z(z\wedge w)=0$, and it is, of course, in the unitary
  group.  Thus $\eta(\Sigma_{z,0,q})$ is of the form
  $\Sigma_{z\wedge w,0,p'} = \Sigma_{Z,0,p'}$ with
  $p'\in \smallset{z\in\K}{\alpha(z)=-z}$. Since $p\R=p'\R$ we may
  achieve $\eta\left(\Sigma_{z,0,q}\right) = \Sigma_{Z,0,p}$, as
  required.
\end{proof}

\begin{rema}\label{charTwoA3inC3}
  If $\Char{\F}=2$ then $\alpha(p)+p=0$ is equivalent to $p \in
  \R+\bj\F$.  %
\end{rema}

\begin{theo}\label{nonsplitHermitian}
  Assume $\Char\F\ne2$, $\sigma\ne\id$ and that~$\K$ is not split. %
  Then the image of\/ $\SUV{V,h}$ under $\eta$ in $\UV{\Ext[2]V,g}$ is
  the group $\EUV{\Ext[2]V,g}$ generated by all isotropic
  transvections.
\end{theo}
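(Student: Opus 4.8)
The plan is to prove the two inclusions $\eta(\SUV{V,h}) \subseteq \EUV{\Ext[2]V,g}$ and $\EUV{\Ext[2]V,g} \subseteq \eta(\SUV{V,h})$ separately. The second inclusion is essentially already in hand: by Lemma~\ref{findTransvection} every isotropic transvection of $\UV{\Ext[2]V,g}$ lies in $\eta(\SUV{V,h})$, and since (for $\Char\F\ne2$, $\sigma\ne\id$, and~$\K$ non-split, so $\K$ is a quaternion \emph{division} algebra) the group $\EUV{\Ext[2]V,g}$ is by definition generated by its isotropic transvections, it follows immediately that $\EUV{\Ext[2]V,g}\subseteq\eta(\SUV{V,h})$. (One should note that the exceptional case $V=\FF_4^3$ in the generation statement does not arise here, since our $\Ext[2]V$ has $\K$-dimension $\binom{4}{2}/2 = 3$ but $\K$ is infinite in characteristic different from two when $\sigma\ne\id$; in fact $\R$ is infinite here. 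Alternatively one cites that the relevant group is generated by transvections whenever the form is isotropic of Witt index~$\ge1$ over a field with more than a handful of elements.)

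For the first inclusion, the natural approach is to show that $\SUV{V,h}$ is generated by Eichler transformations and then to track where $\eta$ sends them. First I would invoke that, since $h$ is isotropic of Witt index~$1$ with $\sigma\ne\id$ and $\Char\F\ne2$, the group $\SUV{V,h}$ coincides with $\EUV{V,h}$ — this is the Eichler/Siegel generation theorem for isotropic unitary groups (cf.~\cite[6.3.1]{MR1007302} and the surrounding material), using that in this non-split situation $\SUV{V,h}$ has no exceptional small-field pathologies. So it suffices to compute $\eta(\Sigma_{z,w,p})$ for a general Eichler transformation $\Sigma_{z,w,p}$ with $h(z,z)=0=h(z,w)$ and $\sigma(p)+p=h(w,w)$, and to verify it is a product of isotropic transvections of~$g$, hence lies in $\EUV{\Ext[2]V,g}$. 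Concretely: $\Ext[2]{\Sigma_{z,w,p}}$ acts on $x\wedge y$ by the obvious formula, and expanding $\Sigma_{z,w,p}(x)\wedge\Sigma_{z,w,p}(y)$ one finds (because $h(z,z)=0$) that the correction terms land in $z^{\perp_h}\wedge z$, which by Proposition~\ref{lambdaFibres} equals $(z\wedge w)\K = Z\K$ for $Z\coloneqq z\wedge w$; one checks $g(Z,Z)=0$ using $h(z,z)=h(z,w)=0$ and Lemma~\ref{isotropic}. Thus $\eta(\Sigma_{z,w,p})$ is of the form $X\mapsto X - Z\,\mu(X)$ for a suitable $\K$-linear functional $\mu$ built from $\lambda_z$ and the datum $(w,p)$, i.e. it is (a product of) transvection(s) along the isotropic point $Z\K$; since it lies in $\UV{\Ext[2]V,g}$ the coefficient is forced into $\{p'\in\K : \alpha(p')=-p'\}$, so this is an Eichler/transvection element of $g$.

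The main obstacle I expect is the bookkeeping for the \emph{general} Eichler transformation (the case $w\ne0$), as opposed to the pure transvection case already handled in Lemma~\ref{findTransvection}: one must check that $\eta(\Sigma_{z,w,p})$ is genuinely a single (generalized) transvection of~$g$ along $Z\K$ — or at worst a short product of isotropic transvections — rather than something with a more complicated unipotent shape on $\Ext[2]V$. The cleanest route is probably to observe that $\Sigma_{z,w,p}$, being unipotent with $(\Sigma_{z,w,p}-\id)^2$ landing in a very small subspace, forces $\Ext[2]{\Sigma_{z,w,p}}-\id$ to be nilpotent of low rank with image inside $Z\K$; combined with the fact that it preserves~$g$ (Theorem~\ref{homoSUtoSaU}), this pins it down as an element of the unipotent radical associated with the isotropic point $Z\K$, which is exactly the group generated by isotropic transvections along that point together with (in the quaternion case) the relevant "long root" elements — all of which lie in $\EUV{\Ext[2]V,g}$. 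A secondary point requiring a line of care is the appeal to Eichler-generation of $\SUV{V,h}$ itself: one should confirm that the Witt-index-$1$, non-split quaternionic setting is covered by the cited result with no exceptional fields excluded, which is the case since $\R$ is infinite in characteristic~$0$ and, in positive characteristic $\ne 2$ with a genuine quaternion division algebra over $\R$, the base field $\R$ is again large enough.
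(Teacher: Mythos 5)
Your first inclusion, $\EUV{\Ext[2]V,g}\subseteq\eta(\SUV{V,h})$ via Lemma~\ref{findTransvection}, is exactly what the paper does, and your remark that the exceptional case $V=\FF_4^3$ cannot occur (since $\K$ is a quaternion \emph{division} algebra, forcing $\R$ infinite) is correct. For the reverse inclusion, however, the paper takes a much shorter route that you should compare with: $\SUV{V,h}$ is perfect, hence so is its image, while $\UV{\Ext[2]V,g}/\EUV{\Ext[2]V,g}$ is abelian by Wall's theorem; a perfect subgroup therefore lands inside $\EUV{\Ext[2]V,g}$. No generation of $\SUV{V,h}$ by Eichler transformations and no computation of $\eta(\Sigma_{z,w,p})$ is needed at all.

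Your computational alternative contains a genuine error at its central step. For a general Eichler transformation $\Sigma_{z,w,p}$ with $w\ne0$, it is \emph{not} true that the correction terms of $\Sigma_{z,w,p}(x)\wedge\Sigma_{z,w,p}(y)-x\wedge y$ land in $z^{\perp_h}\wedge z$, nor that the image of $\eta(\Sigma_{z,w,p})-\id$ is contained in $Z\K$ for $Z=z\wedge w$. Expanding the wedge product one finds, besides terms in $z^{\perp_h}\wedge z$ and a multiple of $z\wedge w$, the term $-\bigl(x\,h(z,y)-y\,h(z,x)\bigr)\wedge w$, which lies in $z^{\perp_h}\wedge w$ and is generically outside $Z\K$; the image of $\eta(\Sigma_{z,w,p})-\id$ is $2$-dimensional over~$\K$. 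Consequently $\eta(\Sigma_{z,w,p})$ is not a transvection along $Z\K$ (and not an element whose difference from the identity has image in $Z\K$) but a genuine Eichler transformation $\Sigma_{Z,W,q}$ with $W\ne0$ — compare the explicit computation in the proof of Theorem~\ref{nonsplitBilinear}, where $\eta(\Sigma_{z,w,p})=\Sigma_{z\wedge w,\,z\wedge u,\,-\frac12}$. Your fallback paragraph gestures at the right fix (such an element lies in the unipotent radical of the stabilizer of the isotropic point $Z\K$, i.e.\ is itself an Eichler transformation and hence belongs to $\EUV{\Ext[2]V,g}$ by definition), but the justification you offer for membership in that unipotent radical is again the false containment of the image in $Z\K$, which characterizes only the transvections. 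So as written the second inclusion is not established; it can be repaired either by carrying out the $\Sigma_{Z,W,q}$ computation honestly, or — more cleanly — by switching to the paper's perfectness argument.
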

\begin{proof}
  From~\ref{findTransvection} we know that $\eta(\SUV{V,h})$ contains
  $\EUV{\Ext[2]V,g}$. On the other hand, the image (like $\SUV{V,h}$
  itself, cf.~\cite[6.4.26]{MR1007302}\footnote{ \ %
    The exceptions in~\cite[6.4.26]{MR1007302} are groups over finite
    fields. Apart from symplectic groups and the orthogonal group with
    respect to a quadratic form on $\FF_2^4$ (all of which are of no
    interest in this paper), there is a unitary group on~$\FF_4^2$
    (which is not of interest here, as we assume $\ell=2$ now).  }%
  ) %
  is a perfect group, and the quotient
  $\UV{\Ext[2]V,g}/\EUV{\Ext[2]V,g}$ is abelian (\cite{MR0104764},
  cf.~\cite[6.4.52]{MR1007302}). %
  Thus $\eta(\SUV{V,h})$ coincides with $\EUV{\Ext[2]V,g}$.
\end{proof}

We cannot expect a result similar to~\ref{nonsplitHermitian} in the
characteristic two case, see~\ref{charTwoA3inC3}. %
It remains to study the case where $\sigma=\id$; then
$\alpha=\id$, as well, and we are dealing with a homomorphism
$\eta\colon\SOV{V,h}\to\OV{\Ext[2]V,g}$. We must consider Eichler
transformations now. We may safely ignore the characteristic two case,
cf.~\cite{KramerStroppel-hodge-char2-arXiv}. 

\begin{lemm}\label{SOtrsAbsG}
  If $\sigma=\id$, $\Char\F\ne2$ and~$h$ is isotropic then
  $\eta(\SOV{V,h})$ acts transitively %
  on the set~$\Abs{g}$ of absolute points of the polarity %
  $\pi_g\colon U\leftrightarrow U^{\perp_g}$ on~$\PG{\Ext[2]{V}}$. %
\end{lemm}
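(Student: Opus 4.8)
The plan is to reduce the assertion, by means of the map~$\lambda$ from~\ref{lambdaFibres}, to transitivity of $\SOV{V,h}$ on the set~$\Abs{h}$ of isotropic points. Recall first that the elements of $\SOV{V,h}=\SUV{V,h}$ act $\K$-linearly on $\Ext[2]V$ (see~\ref{def:Kmodule}) by $\eta(\gamma)=\Ext[2]\gamma\colon u\wedge v\mapsto\gamma(u)\wedge\gamma(v)$, and preserve~$g$ (see~\ref{homoSUtoSaU}); hence $\eta(\gamma)$ normalizes the polarity~$\pi_g$ and permutes~$\Abs{g}$. Moreover $\eta(\gamma)\circ\lambda=\lambda\circ\gamma$ on $\Gr[\F]2V$, because $\Ext[2]\gamma\bigl((u\wedge v)\K\bigr)=(\gamma(u)\wedge\gamma(v))\K$.

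The next step is an intrinsic description of~$\Abs{g}$. Let~$\mathcal D$ denote the set of planes $L\in\Gr[\F]2V$ on which~$h$ is degenerate. By~\ref{lambdaFibres}(b) one has $\lambda(\mathcal D)\subseteq\Abs{g}$, and by~\ref{lambdaFibres}(c) every $Z\K\in\Abs{g}$ is $\lambda(z\F\oplus w\F)$ for some $z,w$ with $h(z,z)=0=h(z,w)$; the plane $z\F\oplus w\F$ lies in~$\mathcal D$, so in fact $\Abs{g}=\lambda(\mathcal D)$. For $L\in\mathcal D$ the radical $A\coloneqq L\cap L^{\perp_h}$ is one-dimensional (since the Witt index is~$1$, $L$ is not totally isotropic), lies in~$\Abs{h}$, and by~\ref{lambdaFibres}(b) the $\lambda$-fibre of $\lambda(L)$ is precisely the pencil $\Gr[\F]2{A^{\perp_h}}$ of planes~$M$ with $A\subseteq M\subseteq A^{\perp_h}$. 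Since every member of that pencil has radical~$A$, the rule $A\mapsto\lambda(M_A)$, with $M_A$ any plane in the pencil at~$A$, is a well-defined injection $\Phi\colon\Abs{h}\to\Abs{g}$; it is onto because $\Abs{g}=\lambda(\mathcal D)$. Thus~$\Phi$ is a bijection.

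Then I would verify that~$\Phi$ is equivariant: for $\gamma\in\SOV{V,h}$ and $A\in\Abs{h}$, the plane $\gamma(M_A)$ satisfies $\gamma(A)\subseteq\gamma(M_A)\subseteq\gamma(A^{\perp_h})=(\gamma A)^{\perp_h}$, so one may take $M_{\gamma A}=\gamma(M_A)$ and obtain $\Phi(\gamma A)=\lambda(\gamma M_A)=\eta(\gamma)\bigl(\lambda(M_A)\bigr)=\eta(\gamma)\bigl(\Phi(A)\bigr)$. Hence $\eta(\SOV{V,h})$ acts transitively on~$\Abs{g}$ if and only if $\SOV{V,h}$ acts transitively on~$\Abs{h}$. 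For the latter, given isotropic lines $z_1\F$ and $z_2\F$, Witt's extension theorem yields $\gamma\in\OV{V,h}$ with $\gamma(z_1)=z_2$. If $\det\gamma=-1$, complete~$z_2$ to a hyperbolic pair $z_2,w$; the plane $\{z_2,w\}^{\perp_h}$ is anisotropic (the Witt index being only~$1$), so the reflection~$\rho$ in any of its nonzero vectors fixes~$z_2$ and has determinant~$-1$, whence $\rho\gamma\in\SOV{V,h}$ carries $z_1\F$ to $z_2\F$. (Alternatively one may invoke that $\EOV{V,h}\le\SOV{V,h}$ is already transitive on the isotropic points.)

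The part I expect to require the most care is the middle step: checking that~$\lambda$ collapses each tangent pencil to a single absolute point of~$g$ and that the resulting identification $\Abs{h}\cong\Abs{g}$ is genuinely $\SOV{V,h}$-equivariant. Once this is established, transitivity on~$\Abs{h}$ is a routine consequence of Witt's theorem together with the determinant correction above.
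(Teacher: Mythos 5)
Your proof is correct and follows essentially the same route as the paper's: both identify the absolute points of~$g$ equivariantly with the isotropic points of~$h$ and then invoke Witt's theorem, the only difference being that the paper establishes the collapse of each tangent pencil by an explicit computation of $\bigl((v_1+v_2)\wedge v_3\bigr)\K$ under the $\K$-action, whereas you cite the fibre description in~\ref{lambdaFibres}. Your determinant correction producing an element of $\SOV{V,h}$ (rather than merely $\OV{V,h}$) from Witt's theorem is a welcome detail that the paper leaves implicit.
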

\begin{proof}
  Using $\Char{F}\ne2$ and the fact that the isotropic bilinear
  form~$h$ secures the existence of a hyperbolic pair in~$V$, we find
  an orthogonal basis $v_1,v_2,v_3,v_4$ for~$V$ such that $v_1+v_2$ is
  isotropic. Passing to a scalar multiple of~$h$ we may assume
  $h(v_1,v_1)=1$; then $h(v_2,v_2)=-1$. We abbreviate $c\coloneqq
  h(v_3,v_3)$, normalize $b(v_1\wedge v_2\wedge v_3\wedge v_4)=1$ and
  compute %
  $J\left((v_1+v_2)\wedge v_3\right)  = (v_1+v_2)\wedge v_4(-c)$.
  
  From~\ref{lambdaFibres} we know that every absolute point of~$\pi_g$
  is of the form $(u\wedge w)\K$ with $u,w$ in~$V$ such that
  $h(u,u)=0=h(u,w)$.  We may assume $u=v_1+v_2$ because the group
  $\SOV{V,h}$ acts transitively on the set of all one-dimensional
  $h$-isotropic subspaces of~$V$ (by Witt's Theorem).

  So it suffices to remark that
  $(v_1+v_2)\wedge(v_1+v_2)^{\perp_g} = (v_1+v_2)\K$; this follows
  from %
  $\left((v_1+v_2)\wedge v_3\right)(r+\bj_2s) =
  (v_1+v_2)\wedge(v_3r-v_4cs)$ and the observation
  $(v_1+v_2)^{\perp_g} = (v_1+v_2)\F+v_3\F+v_4\F$. %
\end{proof}

\begin{theo}\label{nonsplitBilinear}
  Assume $\sigma=\id$, $\Char\F\ne2$ and that~$\K$ is not split. %
  Then $\eta(\SOV{V,h})$ contains $\EOV{\Ext[2]{V},g}$.
\end{theo}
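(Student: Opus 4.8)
The plan is to combine the transitivity result of Lemma~\ref{SOtrsAbsG} with an explicit look at the images of Eichler transformations of $(V,h)$, reducing everything to a single root subgroup. Write $W\coloneqq\Ext[2]V$. Under the present hypotheses $\K$ is a (separable) quadratic extension of~$\F$ and $\alpha=\id$ (see~\ref{rems:compositionalgebra}, \ref{alphaVSkappa}), so $g$ is a non-degenerate symmetric $\K$-bilinear form on the $3$-dimensional $\K$-space~$W$, and by~\ref{isotropic} it has Witt index~$1$. For each absolute point $A\in\Abs{g}$ let $U_A\le\OV{W,g}$ be the unipotent radical of the stabiliser of~$A$; this is the group of Eichler transformations of $(W,g)$ whose isotropic vector spans~$A$ (cf.~\cite[6.1]{MR1007302}), the groups $U_A$ generate $\EOV{W,g}$, and $\OV{W,g}$ permutes them according to its action on~$\Abs{g}$. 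Since $\eta(\SOV{V,h})\subseteq\OV{W,g}$ normalises the family $\{U_A\}$ and, by Lemma~\ref{SOtrsAbsG}, acts transitively on~$\Abs{g}$, it will suffice to produce one absolute point~$A$ with $U_A\subseteq\eta(\SOV{V,h})$.

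First I would fix the orthogonal basis $v_1,\dots,v_4$ of~$V$ from the proof of Lemma~\ref{SOtrsAbsG}, with $h(v_1,v_1)=1=-h(v_2,v_2)$ and $z\coloneqq v_1+v_2$ isotropic, and take $A\coloneqq(z\wedge v_3)\K$; by~\ref{lambdaFibres} (and the computation in the proof of~\ref{SOtrsAbsG}) this is an absolute point of~$g$, with $A=(z\wedge v_3)\F\oplus(z\wedge v_4)\F$ because $J(z\wedge v_3)$ is a scalar multiple of $z\wedge v_4$. The candidates coming from $(V,h)$ are the Eichler transformations $\Sigma_{z,w,p}\in\SOV{V,h}$ with $w\in z^{\perp_h}$ and $p=\tfrac12 h(w,w)$ (forced since $\Char\F\ne2$). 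As there are no non-trivial isotropic transvections in this case, $\Sigma_{z,w,p}$ depends only on $w+z\F$, and the composition law for Eichler transformations sharing an isotropic vector shows (using $\sigma=\id$) that $w\mapsto\Sigma_{z,w,p}$ is a group homomorphism with kernel~$z\F$; hence these transformations form an abelian group isomorphic to $z^{\perp_h}/z\F\cong\F^2$.

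Next I would check that $\eta(\Sigma_{z,w,p})=\Ext[2]{\Sigma_{z,w,p}}$ lands in~$U_A$. It fixes $z\wedge x$ for every $x\in z^{\perp_h}$, since $\Sigma_{z,w,p}$ fixes~$z$ and maps such~$x$ to $x+z\,h(w,x)$; in particular it fixes~$A$ pointwise. Moreover $\Sigma_{z,w,p}$ is unipotent, hence so is its second exterior power. A unipotent isometry of $(W,g)$ fixing the maximal isotropic line~$A$ pointwise preserves~$A^{\perp_g}$ and induces a unipotent isometry of the anisotropic kernel $A^{\perp_g}/A$; as a $1$-dimensional $\K$-space admits only $\pm\id$ as isometries, that induced map is the identity, so the isometry acts trivially on~$A$ and on $A^{\perp_g}/A$ and therefore lies in~$U_A$. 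Thus $\eta$ maps the group $\{\Sigma_{z,w,p}\}\cong\F^2$ into~$U_A$. This map is injective: by~\ref{kernelEta} its kernel lies in $\{\pm\id\}$, and a non-trivial Eichler transformation is never $\pm\id$. Since $U_A$ is the unipotent radical of a Witt-index-$1$ orthogonal group in dimension~$3$, it is isomorphic to $(\K,+)\cong\F^2$; an injective homomorphism between groups of the same finite $\F$-dimension is onto, so $U_A\subseteq\eta(\SOV{V,h})$, and the reduction of the first paragraph gives $\eta(\SOV{V,h})\supseteq\EOV{W,g}$.

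The step I expect to be the main obstacle is the identification $\eta(\Sigma_{z,w,p})\in U_A$ together with the accompanying bookkeeping: one must be sure that ``unipotent isometry fixing a maximal isotropic line pointwise'' really characterises $U_A$ in this small orthogonal group and that $\dim_\F U_A=2$. If one wishes to sidestep this structural input, the alternative — and the genuinely computational core — is to expand $\Ext[2]{\Sigma_{z,w,p}}$ on a basis of~$W$ adapted to~$z$ and to exhibit it directly as an Eichler transformation $\Sigma_{Z,Y,p'}$ of $(W,g)$ with $Z\in A$, using the description of~$g$ from~\ref{def:g} and the $\K$-module structure from~\ref{def:Kmodule}; this is routine but tedious.
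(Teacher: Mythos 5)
Your overall strategy is the same as the paper's: both proofs use Lemma~\ref{SOtrsAbsG} to reduce, via conjugation, to a single root group attached to one absolute point, and both realize that root group by pushing Eichler transformations $\Sigma_{z,w,p}$ of $(V,h)$ (with fixed isotropic $z$ and varying $w\in z^{\perp_h}$) through~$\eta$. Where the paper computes $\eta(\Sigma_{z,w,p})=\Sigma_{z\wedge w,\,z\wedge u,\,-\frac12}$ explicitly on a $\K$-basis and then uses the rescaling identity $\Sigma_{Zc,U,p}=\Sigma_{Z,Uc,c^2p}$, you argue structurally that $\eta(\Sigma_{z,w,p})$ is a unipotent isometry fixing the isotropic line $A=(z\wedge v_3)\K=z\wedge z^{\perp_h}$ pointwise and acting trivially on the anisotropic kernel $A^{\perp_g}/A$, hence lies in~$U_A$. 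That part is correct and is a clean substitute for the explicit formula.

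The gap is the last step. The principle ``an injective homomorphism between groups of the same finite $\F$-dimension is onto'' is false for infinite~$\F$: a group homomorphism between $\F$-vector spaces is only additive, not $\F$-linear, and injective additive self-maps of $\F^2$ need not be surjective (for $\F=\FF_p(t)$ with $p$ odd take $(x,y)\mapsto(x^p,y^p)$; for $\F=\RR$ use a Hamel basis over~$\QQ$). So injectivity of $z^{\perp_h}/z\F\to U_A$ does not by itself give $U_A\subseteq\eta(\SOV{V,h})$. What saves the argument is that the map \emph{is} $\F$-linear, but checking this forces you back into the computation you deferred: writing $w\equiv v_3s_3+v_4s_4 \pmod{z\F}$ and $Z=z\wedge v_3$, the image of $\Sigma_{z,w,p}$ in $U_A\cong Z^{\perp_g}/Z\K\cong\K$ is (up to a fixed unit) the scalar $c_w\in\K$ defined by $z\wedge w=Zc_w$; since $z\wedge v_4\in Z\bj\F$ one finds $c_w=s_3-h(v_3,v_3)^{-1}\bj\,s_4$, an $\F$-linear bijection of $z^{\perp_h}/z\F$ onto~$\K$. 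With that supplement (or with the paper's explicit identification of $\eta(\Sigma_{z,w,p})$ as an Eichler transformation) your proof closes up.
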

\begin{proof}
  Recall from~\ref{wittIndexSplit} that~$h$ has Witt index~$1$.

  We show first that Eichler transformations in $\SOV{V,h}$ are mapped
  to Eichler transformations in $\EOV{\Ext[2]{V},g}$.  Consider
  linearly independent $z,w\in V$ with $h(z,z)=0=h(z,w)$. %
  As $X \coloneqq z\F+w\F$ is not totally isotropic, we have
  $h(w,w)\ne0$, and $X^{\perp_h} \cap X = z\F$. %
  Choose $v\in X^{\perp_h}\smallsetminus z\F$ and then
  $u\in\{w,v\}^{\perp_h}$ such that $h(z,u)=1$. Then $z,w,v,u$ are
  linearly independent, in fact, the Gram matrix for~$g$ with respect
  to this basis is $\left(
    \begin{smallmatrix}
      0 & 0 & -2p \\
      0 & -1 & 0 \\
      -2p & 0 & 0
    \end{smallmatrix}\right)$.
  Put $p\coloneqq \frac12\,h(w,w)$. Evaluating at the $\K$-basis
  $z\wedge w$, $z\wedge u$, $w\wedge u$ for $\Ext[2]{V}$ we obtain %
  $\eta(\Sigma_{z,w,p}) = \Sigma_{z\wedge w,z\wedge u,-\frac12}$.

  Now consider an Eichler transformation $\tau\in\EOV{\Ext[2]{V},g}$.
  By~\ref{SOtrsAbsG} we know that $\eta(\SOV{V,h})$ acts transitively
  on the set~$\Abs{g}$ of absolute points of the polarity~$\pi_g$.
  Therefore, we may assume $\tau=\Sigma_{Z,W,q}$ for $Z=z\wedge
  w$. Now $g(Z,W)=0$ implies $W = Za+Uc$ for $U=z\wedge u$ and some
  $a,c\in\K$. We find $\Sigma_{Z,W,q} = \Sigma_{Z,Uc,q}$ for
  $q=\frac12\,g(Uc,Uc) = \frac{c^2}{2}\,g(U,U)$. %
  Note also that $\Sigma_{Z,Uc,q} = \Sigma_{Zc,U,p}$ with $q=c^2\,p$. 
\end{proof}

\begin{rema}\label{rem:spinorNorm}
  If $\sigma=\id$ and the discriminant of~$h$ is not~$1$ then~$\K$ is
  a commutative field, and the group $\EOV{\Ext[2]V,g}$ coincides %
  (see~\cite[9.7]{MR1859189}) with the kernel of the spinor norm
  $\Theta\colon\SOV{\Ext[2]V,g}\to\K^\myTimes/\K^\sqt$ where $\K^\sqt$
  denotes the group of squares in~$\K^\myTimes$. %
  We do not explicitly determine the range of $\eta$ here. %
\end{rema}

\begin{ndef}[Example: Lorentz group]\label{exam:Lorentz}
  Consider $\R=\RR$ and the (essentially, up to a choice of basis,
  unique) symmetric bilinear form~$h$ of Witt index~$1$ on~$\RR^4$; in
  fact, this is (up to multiplying the form by~$-1$, which does not
  affect the corresponding group of isometries) the form used to
  describe special relativity in physics, the group $\OV{\RR^4,h}$ is
  known as the \emph{Lorentz group}. %
  The group $\SOV{\RR^4,h}$ is not connected; its connected component
  is $\EOV{\RR^4,h}$; this subgroup may be interpreted as the subgroup
  of $\SOV{\RR^4,h}$ leaving invariant the direction of the light
  cone. %
  We have a direct product
  $\SOV{\RR^4,h} = \langle-\id\rangle \, \EOV{\RR^4,h}$.
    
  The discriminant of~$h$ is~$-1$, so $\K$ is not split. Then
  $\K\cong\CC$, the unique quadratic extension field of~$\RR$. %
  The form~$g$ is now a non-degenerate symmetric bilinear form
  (see~\ref{def:g}) on $W\cong\CC^3$, and uniquely determined (up to a
  choice of basis); on~$\CC^3$, such a form~$\tilde{g}$ is given as
  the sum of the squares of the
  coordinates. From~\ref{nonsplitBilinear} we know that the image
  of~$\SOV{\RR^4,h}$ under~$\eta$ contains
  $\EOV{W,g} \cong \EOV{\CC^3,\tilde{g}}$, and is contained in
  $\OV{W,g}$. %
  Note that $\eta(\SOV{\RR^4,h}) = \eta(\EOV{\RR^4,h})$; the kernel
  $\langle-\id\rangle$ is a complement to $\EOV{\RR^4,h}$ in
  $\SOV{\RR^4,h}$.

  Moreover, one knows that $\EOV{\CC^3,\tilde{g}}$ coincides with
  $\SOV{\CC^3,\tilde{g}}$; either by a connectivity argument, or by
  the more general argument from~\ref{rem:spinorNorm}.  The
  isomorphism from
  $\EOV{\RR^4,h}/\langle-\id\rangle \cong \EOV{\RR^4,h}$
  onto~$\SOV{\CC^3,\tilde{g}}$ that we obtain here is known as the
  isomorphism between the simple Lie algebras of
  types~$\rType{D}2{\RR,1}$ and $\Type{B}1$, in the notation of
  Tits~\cite{MR0218489}. %
  Note also that $\EOV{\CC^3,\tilde{g}} \cong \PSL[2]\CC$. This is a
  special case of a general result; for real and complex Lie algebras
  it reflects $\rType{B}1{\RR,1}\cong\rType{A}1{\RR}$ and
  $\Type{B}1 \cong \Type{A}1$.
\end{ndef}

\removelastskip%
\enlargethispage{15mm}%
\begin{ndef}[Examples: Orthogonal groups over finite fields of odd
  order]\label{exam:finiteOrth}%
  Let $\F = \FF_e$ be the finite field of order~$e$, and consider a
  diagonalizable $\sigma$-hermitian form~$h$
  on~$\F^4$. If~$\sigma\ne\id$ then~$h$ has discriminant one, and~$\K$
  is split. So assume $\sigma=\id$, and that the discriminant is not a
  square.  Ignoring the inseparable cases, we assume
  $\Char\F\ne2$. Then the assumption that~$\K$ is non-split yields
  that~$h$ is not the hyperbolic form on~$\F^4$, but equivalent to the
  form known as~$q_4^-$ (cp.~\cite[pp.\,139\,ff]{MR1189139}). In other
  words, there exists $\delta\in\F\smallsetminus\F^\sq$ such that we may
  assume that (with respect to the standard basis $e_1,\dots,e_4$) the
  Gram matrix of~$h$ is $T = \left(
    \begin{smallmatrix}
      0 & 1 & 0 & 0 \\
      1 & 0 & 0 & 0 \\
      0 & 0 & 1 & 0 \\
      0 & 0 & 0 &-\delta \\
    \end{smallmatrix}\right)$. %
  We have $\F(\sqrt{\delta}) \cong \FF_{e^2}$.  The norm form of the
  quadratic field extension $\FF_{e^2}|\FF_e$ is surjective: for each
  $s\in\F^\myTimes$, there exist $x,y\in\F$ such that
  $s=x^2-\delta y^2$. In standard coordinates, the matrix $\left(
    \begin{smallmatrix}
      0 & s & 0 & 0 \\
      1 & 0 & 0 & 0 \\
      0 & 0 & x & \delta y \\
      0 & 0 & y & x \\
    \end{smallmatrix}\right)$ %
  now describes a similitude $\gamma_s$ with multiplier
  $r_{\gamma_s}=s$ and $\det\gamma_s=-s^2$.

  With respect to the basis $v_1\coloneqq e_1+e_2$,
  $v_2\coloneqq e_1-e_2$, $v_3\coloneqq e_3$, $v_4\coloneqq e_4$, the
  Gram matrix for~$h$ is $H = \left(
    \begin{smallmatrix}
      2 & 0 & 0 & 0 \\
      0 &-2 & 0 & 0 \\
      0 & 0 & 1 & 0 \\
      0 & 0 & 0 &-\delta
    \end{smallmatrix}\right)$. %
  Using $b$ with $b(v_1\wedge v_2\wedge v_3\wedge v_4) = 2$, we obtain
  $\bj^2=\delta$, and $\K \cong \F(\sqrt\delta)$. %
  In $M_2 \cong \K$ (see~\ref{similitudeAlgebraAut}), conjugation by
  $\gamma_s$ maps~$J$ to
  $\frac{\det\gamma_s}{r_{\gamma_s}^2}\,\id\circ J =
  \frac{-s^2}{s^2}\,\id\circ J = -J$. %
  So conjugation by~$\gamma_s$ induces the generator of the Galois
  group of the extension~$\K|\F$.

  The group $\OV{\F^4,h} \cong \Orth[-]4{e}$ has order
  $2e^2(e^2+1)(e^2-1)$, we have of course
  $|\SOV{\F^4,h}| = e^2(e^2+1)(e^2-1)$. The group $\EOV{\F^4,h}$ has
  index~$2$ in $\SOV{\F^4,h}$, the element $-\id$ generates a
  complement to $\EOV{\F^4,h}$ in $\SOV{\F^4,h}$, see~\cite[9.7,
  9.8]{MR1859189}.  So $\eta(\SOV{\F^4,h}) = \eta(\EOV{\F^4,h})$ is
  isomorphic to~$\EOV{\F^4,h}$, and contains $\EOV{W,g}$. The latter
  group has index~$2$ in $\SOV{W,g} \cong \PGL[2]{\K}$,
  see~\cite[11.8]{MR1189139}. So
  $|\EOV{W,g}| = (e^2+1)e^2(e^2-1)/2 = |\EOV{\F^4,h}|$, and
  $\EOV{\F^4,h} \cong \EOV{W,g} \cong \PSL[2]{\FF_{e^2}}$ follows.

  We note that $\left(
    \begin{smallmatrix}
      1 & 0 & 0 & 0 \\
      0 & 1 & 0 & 0 \\
      0 & 0 & 1 & 0 \\
      0 & 0 & 0 &-1 \\
    \end{smallmatrix}\right)$ describes an element
  $\rho \in \OV{\F^4,h} \smallsetminus \SOV{\F^4,h}$, and induces an
  element $\eta(\rho) \in \OV{W,g} \smallsetminus \SOV{W,g}$.  So we have
  found that $\eta(\OV{\F^4,h})$ contains $\EOV{W,g}$ as a subgroup of
  index~$2$, but does not contain $\SOV{W,g}$.

  We remark that the group $\EOV{V,h}$ is often denoted $\SOpV{V,h}$.
\end{ndef}

\section{Anisotropic forms}
\label{sec:anisotropic}

It remains to study the case where the form~$h$ is anisotropic. %
Recall from~\ref{exthMayBecomeIsotropic} that $\Ext[\ell]{h}$ may
become isotropic.  Also, the form~$g$ may be isotropic,
see~\ref{exam:anisotropicNonSplit} below.

We concentrate on the case $\sigma=\id$ in this section; the hermitian
case is left open in the present paper. %
In the case where $\Char\F\ne2$, the isometry groups of anisotropic
(quadratic) forms are notoriously difficult; in particular, we do not
have the tool of Eichler transformations at our disposal.  It also
happens that $\SOV{\Ext[]V,g}$ has large normal subgroups with large
index (\cite[Ch.\,5, \S\,3]{MR0082463}, \cite{MR0257242},
\cite[6.4.55]{MR1007302}, cp. also~\cite{MR3871471}),
see~\ref{properNormalSubgroup} below. %

If $\Char{\F}=2$ then the isometry group of~$h$ is contained in the
isometry group of the anisotropic quadratic diagonal form (over~$\R$)
mapping $v$ to $h(v,v)$; these groups are trivial. %
The split cases have been treated in~\ref{hQuaternionNorm},
\ref{splitSigmaNotId}, and~\ref{splitSigmaNotIdCharTwo}; %
except for the case where $\Char\F=2$ and $\sigma=\id$. %
Those latter cases are treated in~\cite{KramerStroppel-hodge-char2-arXiv}.
In the present section, we therefore concentrate on the non-split
case, and assume $\Char\F\ne2$.

We show that the homomorphism
$\eta_2\colon \SOV{V,h}=\SOV{\F^4,h} \to \SOV{\Ext[2]{V},g} \cong
\SOV{\K^3,g}$ need not be surjective if~$\K$ is not split and~$h$ is
anisotropic.

Over the field of real numbers, anisotropic bilinear forms are
positively or negatively definite ones, and the discriminant is~$1$ if
the dimension is even. As this leads to split cases, the real case
does not play any role in the present section.

\removelastskip%
\enlargethispage{5mm}%
\begin{exam}\label{exam:anisotropicNonSplit}
  Let $\F=\QQ$ and consider the bilinear form~$h$ on~$\QQ^4$ given by
  \[
  h(x_1,x_2,x_3,x_4) = x_1^2+2\,x_2^2+10\,x_3^2-5\,x_4^2 \,.
  \]
  The discriminant of this form is the square class of~$-1$. Thus the
  algebra $\K_2$ is not split; in fact it is isomorphic to $\QQ(i)$
  where $i$ is a square root of~$-1$.

  Moreover, the form $h$ is anisotropic. In fact, if this were not the
  case we could pick a minimal isotropic vector~$z$ with integer
  entries. %
  Considering the equation $h(z,z)=0$ first modulo~$5$, and using the
  fact that the squares in $\ZZ/5\ZZ$ are $0$, $1$, and $4\equiv-1$,
  we see $z_1,z_2\in 5\ZZ$. Considering the equation modulo~$25$, we
  then see $z_3,z_4\in 25\ZZ$. So~$z$ has to lie in~$5\ZZ^4$,
  contradicting our minimality assumption.

  The form $g$ is isotropic; e.g., for
  $w \coloneqq (v_1\wedge v_3)10-(v_1\wedge v_4)(10+\bj)$ we have
  $g(w,w)=0$, cp.~\ref{gDiagonal}. %
  Thus we have an explicit example of an anisotropic form~$h$ such
  that $\K_2$ is not split and~$g$ is isotropic. %
  
  Starting from the standard basis $e_1,e_2,e_3,e_4$ for $\QQ^4$ we
  get the basis %
  ${e_1\wedge e_2}$, ${e_1\wedge e_3}$, ${e_1\wedge e_4}$ for
  $\Ext[2]{\QQ^4}$ considered as a vector space over $\K_2=\QQ(i)$
  where $\bj=10i$ maps %
  ${e_1\wedge e_2}$, ${e_1\wedge e_3}$, and ${e_1\wedge e_4}$ to %
  $2\,{e_3\wedge e_4}$, $10\,{e_4\wedge e_2}$, %
  and $-5\,{e_2\wedge e_3}$, respectively.

  The group $\OV{V,h}$ does not act transitively on the
  $\SOV{\Ext[2]{V},g}$-orbit
  \[
  g^{\gets}(2) \coloneqq \smallset{X\K}{X\in{\Ext[2]V},\, g(X,X)=2}
  \,. %
  \]
  In order to prove this, we note that %
  the restrictions of~$h$ to the two subspaces
  $L_1\coloneqq(1,0,0,0)^\transp\F\oplus(0,1,0,0)^\transp\F$ and
  $L_2\coloneqq(0,0,1,0)^\transp\F\oplus(1/2,0,0,1/10)^\transp\F$ both
  have discriminant~$2$, and~$\lambda(L_1)$ and~$\lambda(L_2)$ both
  belong to $g^{\gets}(2)$. From~\ref{lambdaFibres} we know %
  that $\lambda(L)=\lambda(L_k)$ means $L\in\{L_k^{},L_k^{\perp_h}\}$
  because the restriction of~$h$ to~$L_k$ is not degenerate. As $L_2$
  is not isometric\footnote{ \ %
    The restrictions of $h$ to $L_1$ and~$L_2$ are similar, but not
    isometric; e.g., $5\notin\set{h(v,v)}{v\in L_2}$.} %
  to any $L\in\{L_1^{},L_1^{\perp_h}\}$, we find that there is no
  element of $\OV{V,h}$ mapping $\lambda(L_1)$ to~$\lambda(L_2)$.
\end{exam}

\begin{lemm}[{\cite[II\,\S6,\,1),\,p.\,49]{MR0072144}}]\label{generateSO}
  If $\dim{X}\ge3$ and $f$ is a non-degenerate symmetric bilinear form
  on~$X$ then $\SOV{X,f}$ is generated by the half-turns, %
  i.e.  the involutions whose space of fixed points has
  codimension~$2$ in~$X$. %
  \qed
\end{lemm}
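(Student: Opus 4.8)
This result is classical, and is quoted here from \cite[II\,\S6]{MR0072144}; the following is the route I would take for a self-contained proof in the case $\Char\F\ne2$ that is the one actually needed in this paper (the characteristic-two case, where reflections and the Cartan--Dieudonn\'e theorem itself must be modified, is in the cited reference). Write $n \coloneqq \dim X$. Every half-turn has determinant $(-1)^2=1$, so the subgroup it generates is contained in $\SOV{X,f}$, and only the reverse inclusion needs an argument.

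The plan is to combine the Cartan--Dieudonn\'e theorem with a pivoting trick. First I would invoke Cartan--Dieudonn\'e: every element of $\OV{X,f}$ is a product of reflections $\tau_u\colon x \mapsto x - 2\,f(u,u)^{-1}f(u,x)\,u$ in non-isotropic vectors~$u$, and the elements of $\SOV{X,f}$ are precisely those expressible with an even number of such factors. Hence $\SOV{X,f}$ is generated by the products $\tau_u\tau_v$ with $u,v$ non-isotropic, and it suffices to write each such product as a product of half-turns. I would record the elementary observation that $\tau_u\tau_v$ is itself a half-turn precisely when $f(u,v)=0$: then $\langle u,v\rangle$ is a non-degenerate plane on which $\tau_u\tau_v$ acts as $-\id$ while fixing the (non-degenerate) orthogonal complement pointwise; conversely, if $\rho$ is a half-turn then (for $\Char\F\ne2$) one has $X = \mathrm{Fix}(\rho)\perp\mathrm{Fix}(\rho)^\perp$ with $\mathrm{Fix}(\rho)^\perp$ a non-degenerate plane, and choosing an orthogonal basis $a,b$ of that plane gives $\rho=\tau_a\tau_b$.

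The heart of the argument treats $\tau_u\tau_v$ with $f(u,v)\ne0$. If the plane $P \coloneqq \langle u,v\rangle$ is non-degenerate, then, using $n\ge3$, the complement $P^\perp$ is non-degenerate of dimension $\ge1$ and hence contains a non-isotropic vector~$w$; since $w\perp u$ and $w\perp v$, both $\tau_u\tau_w$ and $\tau_w\tau_v$ are half-turns, and $\tau_u\tau_v=(\tau_u\tau_w)(\tau_w\tau_v)$. If $P$ is degenerate, I would first write $\tau_u\tau_v=(\tau_u\tau_w)(\tau_w\tau_v)$ for a non-isotropic pivot $w$ chosen so that each of $\langle u,w\rangle$ and $\langle w,v\rangle$ is non-degenerate (or contains an orthogonal pair), and then apply the previous case to each factor; concretely, one can search for $w$ inside $u^\perp$, using a hyperbolic splitting of $u^\perp$ when the orthogonal projection of $v$ onto $u^\perp$ happens to be isotropic.

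The step I expect to be the main obstacle is exactly the existence of this pivot~$w$: being isotropic, or spanning a degenerate plane with~$u$, or spanning a degenerate plane with~$v$, are each the vanishing of a quadratic polynomial in~$w$, so for $n\ge3$ a suitable~$w$ can be produced by a short genericity/counting argument --- but this is where the small-field bookkeeping lives, and where, for $\Char\F=2$, the entire framework has to be replaced. In a self-contained write-up I would therefore carry out the $\Char\F\ne2$ computation in full and defer the remaining (for the present paper unneeded) cases to \cite[II\,\S6]{MR0072144}.
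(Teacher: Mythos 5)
The paper offers no proof of this lemma at all: it is quoted from Dieudonn\'e (the reference in the header) and stated without proof, so there is nothing of the authors' to compare your argument with. Your reconstruction is the standard Cartan--Dieudonn\'e route and its skeleton is correct: half-turns have determinant $1$; by Cartan--Dieudonn\'e it suffices to express each product $\tau_u\tau_v$ of two reflections as a product of half-turns; the case $f(u,v)=0$ gives a half-turn outright (the plane $\langle u,v\rangle$ is then automatically non-degenerate); and the case where $\langle u,v\rangle$ is non-degenerate is settled by a non-isotropic pivot in $\langle u,v\rangle^{\perp}$, which exists because that complement is non-degenerate of dimension $\dim X-2\ge1$. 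Your converse description of half-turns as $\tau_a\tau_b$ with $a\perp b$ is also correct, since the $\pm1$-eigenspaces of an involutory isometry are mutually orthogonal and hence each non-degenerate.

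The one genuine thin spot is exactly the one you flagged: the existence of the pivot when $P=\langle u,v\rangle$ is degenerate. It is worth knowing that no genericity or counting argument, and no small-field casework, is needed there once $\Char\F\ne2$. Assuming $u,v$ linearly independent (otherwise $\tau_u\tau_v=\id$), write $v=u\alpha+v_0$ with $\alpha=f(u,v)/f(u,u)$ and $v_0\in u^{\perp}$; degeneracy of $P$ forces $f(v_0,v_0)=0$ with $v_0\ne0$. Since $u^{\perp}$ is non-degenerate of dimension $\ge2$ and contains the isotropic vector $v_0$, it contains a hyperbolic pair $v_0,v_1$. Put $w\coloneqq v_0+v_1c$; then $f(u,w)=0$, $f(w,w)=2c$ and $f(w,v)=c$, so the Gram determinant of $\langle w,v\rangle$ equals $c\,\bigl(2f(v,v)-c\bigr)$. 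Any $c$ with $c\ne0$ and $c\ne 2f(v,v)$ works, and such $c$ exists because $|\F|\ge3$. Then $\tau_u\tau_w$ is a half-turn and $\tau_w\tau_v$ falls under your non-degenerate-plane case, so $\tau_u\tau_v$ is a product of three half-turns. With that supplement your proof is complete for $\Char\F\ne2$, which is the only case in which the paper actually invokes the lemma.
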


\begin{theo}\label{properNormalSubgroup}
  The subgroup $H\le \SOV{\Ext[2]{V},g}$ generated by all half-turns
  around axes that meet the Klein quadric is a normal subgroup of\/
  $\gOV{\Ext[2]{V},g}$, and this subgroup~$H$ is contained in the
  image $\eta(\SOV{V,h})$. %
  In general, both~$H$ and\/ $\eta(\SOV{V,h})$ are proper subgroups of
  $\SOV{\Ext[2]{V},g}$. 
\end{theo}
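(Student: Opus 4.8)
The plan is to treat the three assertions in turn, using a very concrete description of the generating half-turns of $H$. Throughout write $W:=\Ext[2]{V}$; since $\sigma=\id$ and $\ell=2$ is even we have $\alpha=\id$, so $g$ is a nondegenerate symmetric $\K$-bilinear form on the $3$-dimensional $\K$-space $W$, and $\K=\F(\sqrt\delta)$ is a quadratic field extension (because $\K$ is not split). For a $\K$-line $Z\K$ on which $g$ is anisotropic let $\rho_Z$ denote the half-turn with axis $Z\K$, i.e. the $\K$-linear involution fixing $Z\K$ pointwise and inducing $-\id$ on $(Z\K)^{\perp_g}$. The starting point is: \emph{if $Z=u\wedge v$ with $L:=u\F\oplus v\F$ a plane on which $h$ is nondegenerate, then $\rho_Z=\eta(\tau_L)$}, where $\tau_L:=(-\id_L)\oplus\id_{L^{\perp_h}}\in\SOV{V,h}$. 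Indeed, pick an orthogonal basis $v_1,\dots,v_4$ of $V$ with $v_1\F\oplus v_2\F=L$; by~\ref{computeHodge} the Hodge operator carries $v_1\wedge v_2$ into $(v_3\wedge v_4)\F$, so $(v_1\wedge v_2)\K=\langle v_1\wedge v_2,\,v_3\wedge v_4\rangle_\F$, and a direct check with the formulas of~\ref{computeHodge} and~\ref{def:g} shows that this is both a $g$-nondegenerate line and the $+1$-eigenspace of $\Ext[2]{\tau_L}$, whose $g$-orthogonal complement $\langle v_1\wedge v_3,\,v_1\wedge v_4,\,v_2\wedge v_3,\,v_2\wedge v_4\rangle_\F$ is the $-1$-eigenspace of $\Ext[2]{\tau_L}$.

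For the inclusion $H\subseteq\eta(\SOV{V,h})$, let $\rho_Z$ be a generator of $H$. Its axis meets the Klein quadric, hence contains a decomposable vector, and after replacing $Z$ by a $\K$-multiple we may assume $Z=u\wedge v$ is decomposable. Then $\pf(Z,Z)=0$, so $g(Z,Z)=\Ext[2]{h}(Z,Z)$, which is the determinant of a Gram matrix of $h|_{L}$ with $L:=u\F\oplus v\F$; since the half-turn exists, $g(Z,Z)\ne0$, so $h|_{L}$ is nondegenerate and the previous paragraph gives $\rho_Z=\eta(\tau_L)\in\eta(\SOV{V,h})$. As $H$ is generated by such $\rho_Z$, we obtain $H\subseteq\eta(\SOV{V,h})$. (Applying~\ref{generateSO} to $V$ shows that $\SOV{V,h}$ is itself generated by the $\tau_L$, so in fact $H=\eta(\SOV{V,h})$.)

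For normality note first that for $\gamma\in\gOV{W,g}$ one has $\gamma\rho_Z\gamma^{-1}=\rho_{\gamma(Z)}$, because $\gamma$ maps $Z\K$ to $\gamma(Z)\K$ and preserves $g$-orthogonality; so it suffices to show that the set of $g$-anisotropic $\K$-lines $N$ whose axis meets the Klein quadric is $\gOV{W,g}$-stable. By~\ref{lambdaFibres} (together with the Klein correspondence $u\F\oplus v\F\mapsto(u\wedge v)\F$), such an $N$ is exactly a line $\lambda(L)$ with $h|_{L}$ nondegenerate, and this holds precisely when $g|_N$ represents a nonzero element of $\F$: if $g(X_0,X_0)=t\in\F^\myTimes$ for some $X_0\in N$ then $\pf(X_0,X_0)=0$, so $X_0=u\wedge v$ is decomposable and the determinant of a Gram matrix of $h|_{u\F\oplus v\F}$ equals $\Ext[2]{h}(X_0,X_0)=t\ne0$; the converse was just seen. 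Since the set of values of $g$ on $N$ is $g(X_0,X_0)(\K^\myTimes)^2\cup\{0\}$, this says $g(X_0,X_0)\in\F^\myTimes(\K^\myTimes)^2$. Now every $\gamma\in\gOV{W,g}$ fixes $\F$ pointwise (its companion lies in $\Gal(\K|\F)=\{\id,\kappa\}$) and has multiplier $r\in\F^\myTimes(\K^\myTimes)^2$: for companion $\id$ this follows from $(\det_\K\gamma)^2=r^{3}$ together with $\dim_\K W=3$ being odd, and for companion $\kappa$ a determinant computation — diagonalising $g$ over $\K$ and using $a/\kappa(a)=a^{2}/N_{\K|\F}(a)$ — gives $r^{3}\in\F^\myTimes(\K^\myTimes)^2$, whence again $r\in\F^\myTimes(\K^\myTimes)^2$. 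Hence, choosing $X_0$ with $g(X_0,X_0)=t\in\F^\myTimes$, the values of $g$ on $\gamma(N)$ are $c^{2}rt$ with $c\in\K$, and as $rt\in\F^\myTimes(\K^\myTimes)^2$ some such value lies in $\F^\myTimes$; so $\gamma(N)$ is again of the required form. Thus $\gamma H\gamma^{-1}=H$ and $H\trianglelefteq\gOV{W,g}$.

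Finally, both $H$ and $\eta(\SOV{V,h})$ can be proper subgroups: in the situation of Example~\ref{exam:anisotropicNonSplit} ($\F=\QQ$, $h$ anisotropic, $\K\cong\QQ(\sqrt{-1})$ not split) the group $\OV{V,h}$, acting through $\eta$, does not act transitively on the single $\SOV{W,g}$-orbit $g^{\gets}(2)$; since $H\subseteq\eta(\SOV{V,h})\le\eta(\OV{V,h})$ and all three groups leave $g^{\gets}(2)$ invariant, neither $H$ nor $\eta(\SOV{V,h})$ is transitive on $g^{\gets}(2)$, while $\SOV{\Ext[2]{V},g}$ is, so both are proper subgroups of $\SOV{\Ext[2]{V},g}$. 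The one genuinely delicate point is the normality statement: the condition defining the axes of the generators of $H$ is phrased via the Pfaffian form and not via $g$, so one must first re-express it intrinsically in $(W,g,\K)$ — as ``$g|_N$ represents an element of $\F^\myTimes$'' — and then keep track of the similitude multipliers, which is immediate for the $\K$-linear part and needs the short determinant computation above for the semilinear part.
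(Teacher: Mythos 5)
Your proof is correct and follows the same strategy as the paper's: each generating half-turn of $H$ is realized as $\eta(\tau_L)$ for a nondegenerate plane $L\le V$ (your $\tau_L$ is exactly the paper's product $\rho_2\circ\rho_1$ of two reflections in orthogonal anisotropic vectors), normality is reduced to the invariance of the set of admissible axes via $\pf(X,X)=0\iff g(X,X)\in\F$, and properness is read off from Example~\ref{exam:anisotropicNonSplit}. Two points where you go beyond the paper are worth recording. First, your normality argument is more complete than the paper's one-line justification: you correctly notice that invariance of the axis condition is a statement about the coset $g(X_0,X_0)\,\F^\myTimes(\K^\myTimes)^2$, so one must also control the multiplier of a (semi-)similitude, and your determinant computation on the odd-dimensional $\K$-space $\Ext[2]{V}$ does exactly that; this is a genuine gap-filling step. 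Second, your parenthetical claim that in fact $H=\eta(\SOV{V,h})$ follows, as you say, from Lemma~\ref{generateSO} applied to $(V,h)$ itself: $\SOV{V,h}$ is generated by its half-turns $\tau_L$, and each $\eta(\tau_L)=\Ext[2]{\tau_L}$ is a half-turn of $(\Ext[2]{V},g)$ whose axis $(u\wedge v)\K$ contains the Klein-quadric point $(u\wedge v)\F$, whence $\eta(\SOV{V,h})\le H$. Be aware that this would settle, negatively, the question the paper explicitly leaves open right after the theorem (whether $H\lneqq\eta(\SOV{V,h})$ can occur); granting Lemma~\ref{generateSO} exactly as stated, your deduction looks sound, but you should verify that Dieudonn\'e's generation theorem has no exceptional cases relevant to a four-dimensional anisotropic space before claiming the equality.
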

\begin{proof}
  Each half-turn is determined by its axis; thus half-turns are
  conjugates precisely if their axes are in the same orbit under the
  group of similitudes. In order to prove normality of~$H$ in
  $\gOV{\Ext[2]{V},g}$, it remains to note that the Klein quadric is
  invariant under the action of the group of semi-similitudes because
  ${\pf(X,X)=0}$ characterizes the points~$X\F$ on the Klein quadric,
  and $\pf(X,X)=0 \iff g(X,X)\in\F$ holds for $X\in\Ext[2]{V}$ by the
  construction of the form~$g$, see~\ref{def:g}.

  Now let $X\in\Ext[2]{V}$ such that $X\K$ meets the Klein
  quadric. Then there exists $Y\in X\K$ such that $\pf(Y,Y)=0$, and
  there are $v_1,v_2\in V$ such that $Y=v_1\wedge v_2$. 

  Since $h$ is anisotropic we may assume that $h(v_1,v_2)=0$, and we
  can extend this to an orthogonal basis $v_1,v_2,v_3,v_4$
  for~$V$. Then $Y_1\coloneqq v_1\wedge v_2$,
  $Y_2\coloneqq v_1\wedge v_3$, $Y_3\coloneqq v_1\wedge v_4$ form a
  $\K$-basis for $\Ext[2]{V}$. The linear map $\rho_1$ mapping $v_1$
  to $-v_1$ and fixing $v_2,v_3,v_4$ is a hyperplane reflection in
  $\OV{V,h}$. %
  We see immediately that $\eta(\rho_1)=\eta(-\rho_1)$ is a
  $\K$-semilinear map with companion automorphism~$\kappa$, fixing
  $Y_1\bj$, $Y_2\bj$ and $Y_3\bj$. Interchanging the roles of $v_1$
  and $v_2$ we obtain $\rho_2$ such that $\eta(\rho_2\circ\rho_1)$ is
  the $\K$-linear map fixing $Y_1$ and inducing $-\id$ on
  $Y_2\K\oplus Y_3\K$. Thus $\eta(\rho_2\circ\rho_1)$ is one of the
  involutions in $\SOV{\Ext[2]{V},g}$. %
  In this way we obtain all those half-turns in $\SOV{\Ext[2]{V},g}$
  whose axis is spanned by some
  $X\in\smallset{u\wedge v}{u,v\in V}\smallsetminus\{0\}$, i.e., those
  with axes that meet the Klein quadric. %

  Finally, we recall from~\ref{exam:anisotropicNonSplit} that it may
  happen that $\eta(\SOV{V,h})$ does not act transitively on some
  $\SOV{\Ext[2]{V},g}$-orbit. In such cases, we clearly have
  $H\le\eta(\SOV{V,h}) \lneqq \SOV{\Ext[2]{V},g}$.
\end{proof}

In the situation of~\ref{properNormalSubgroup}, it remains as an open
question whether there are examples where $H\lneqq\eta(\SOV{V,h})$.

\enlargethispage{8mm}%
\begin{acks}
  In her diploma thesis~\cite{Fiedler} Ulrike Fiedler worked out
  several examples explicitly, and spotted various slips in a draft of
  the present paper.

  Funded by the Deutsche Forschungsgemeinschaft through a
  Polish-German \emph{Beethoven} grant KR1668/11, and under Germany's
  Excellence Strategy EXC 2044-390685587, Mathematics M\"unster:
  Dynamics-Geometry-Structure.
\end{acks}

\bigbreak
\providecommand{\noopsort}[1]{}\def\cprime{$'$}
  \def\polhk#1{\setbox0=\hbox{#1}{\ooalign{\hidewidth
  \lower1.5ex\hbox{`}\hidewidth\crcr\unhbox0}}}

\bigskip

\begin{minipage}{0.3\linewidth}
  Linus Kramer 
\end{minipage}
\begin{minipage}[t]{0.6\linewidth}
  Mathematisches Institut\\
  \mbox{Fachbereich Mathematik und Informatik}\\
  Universit\"at M\"unster\\
  Einsteinstra{\ss}e 62\\
  48149 M\"unster (Germany)\\ 
  linus.kramer@uni-muenster.de 
\end{minipage}

\bigskip
\begin{minipage}{0.3\linewidth}
  Markus J. Stroppel 
\end{minipage}
\begin{minipage}[t]{0.6\linewidth}
  LExMath\\
  Fakult\"at 8\\
  Universit\"at Stuttgart\\
  70550 Stuttgart\\ 
  stroppel@mathematik.uni-stuttgart.de 
\end{minipage}

\end{document}